\documentclass[11pt]{article}
\usepackage[T1]{fontenc}
\usepackage[latin1]{inputenc}
\usepackage[margin=0.8in]{geometry}
\usepackage{amsmath,amssymb,amstext}
\usepackage{amsthm}
\usepackage{cleveref}
\usepackage{authblk}
\usepackage{enumerate}
\usepackage{dirtytalk}
\usepackage{tikz}
\usepackage{thmtools}
\usepackage{thm-restate}
\usetikzlibrary{shapes}

\newtheorem{thm}{Theorem}[section]

\newtheorem{prop}[thm]{Proposition}
\newtheorem{lemma}[thm]{Lemma}
\newtheorem{conj}[thm]{Conjecture}
\newtheorem{cor}[thm]{Corollary}
\newtheorem{claim}{Claim}

\newtheorem{obs}[thm]{Observation}

\numberwithin{equation}{section}
\theoremstyle{definition} 
\newtheorem{definition}[thm]{Definition}

%
%

\newcommand{\defc}{\textnormal{def}}

\definecolor{asparagus}{rgb}{0.53, 0.66, 0.42}

\definecolor{cerulean}{rgb}{0.0, 0.48, 0.65}
\definecolor{cornellred}{rgb}{0.7, 0.11, 0.11}
\definecolor{darklavender}{rgb}{0.45, 0.31, 0.59}
\definecolor{darkslateblue}{rgb}{0.28, 0.24, 0.55}
\definecolor{burntorange}{rgb}{0.8, 0.33, 0.0}
\date{}

\begin{document}


\author{Luke Postle\thanks{We acknowledge the support of the Natural Sciences and Engineering Research Council of Canada (NSERC) [Discovery Grant No.  2019-04304].\\
\hphantom{m} $^*$Cette recherche a \'{e}t\'{e} financ\'{e}e par le Conseil de recherches en sciences naturelles et en g\'{e}nie du Canada (CRSNG)[Discovery Grant No.  2019-04304].} }
\author{Evelyne Smith-Roberge$^\dagger$}
\affil{$^*$Dept.~of Combinatorics and Optimization, University of Waterloo \\ \texttt{lpostle@uwaterloo.ca}}
\affil{$^\dagger$School of Mathematics, Georgia Institute of Technology \\ \texttt{esmithroberge3@gatech.edu}}
\title{Exponentially Many Correspondence Colourings of Planar and Locally Planar Graphs}
\date{\today}
\maketitle
\date{}

\begin{abstract}
We show that there exists a constant $c > 0$ such that if $G$ is a planar graph with 5-correspondence assignment $(L,M)$, then $G$ has at least $2^{c\cdot v(G)}$ distinct $(L,M)$-colourings. This confirms a conjecture of Langhede and Thomassen. More broadly, we introduce a general method showing how hyperbolicity theorems for certain families of critical graphs can be used to derive lower bounds on the number of colourings of the associated class of planar graphs. Hence our main result follows from this method plus a technical theorem (that we proved in a previous paper) involving the hyperbolicity of graphs critical for $5$-correspondence colouring. We further demonstrate our method in the case of counting 3-correspondence colourings of planar graphs of girth at least five. Finally, we use these theorems to show analogous results hold in the case of counting 5-correspondence colourings of locally planar graphs, and counting 3-correspondence colourings of locally planar graphs of girth at least five.
\end{abstract}
\maketitle


\section{Introduction}


\subsection{Results}\label{subsec:results}
All graphs in this paper are simple and finite. Given a graph $G$ with vertex-set $V(G)$ and edge-set $E(G)$, we denote $|V(G)|$ and $|E(G)|$ by $v(G)$ and $e(G)$, respectively.  A \emph{$k$-colouring} of $G$ is a function $\varphi:V(G) \rightarrow \{1, 2, \dots, k\}$ with the property that $\varphi(u) \neq \varphi(v)$ for each $uv \in E(G)$. We say $G$ is \emph{$k$-colourable} if there exists a $k$-colouring of $G$.  A \emph{$k$-list assignment} for a graph $G$ is a function $L$ with domain $V(G)$ that assigns to each vertex $v \in V(G)$ a set $L(v)$ (called a \emph{list of colours}) with $|L(v)| \geq k$. We say $G$ is \emph{$L$-colourable} if there exists a colouring $\varphi$ of $G$ with $\varphi(v) \in L(v)$ for each $v \in V(G)$. We say $G$ is \emph{$k$-list-colourable} (or \emph{$k$-choosable}) if there exists an $L'$-colouring for every $k$-list assignment $L'$ of $G$.

Many questions in the field of graph colouring involve determining whether or not a graph with specific structure is colourable (for some notion of colouring). Relatedly, we might ask the following: given that a graph \emph{is} colourable, how easy is it to find a colouring? 

One way to answer this question is to investigate how many distinct colourings of the graph there are. This sort of question has already been studied extensively for list and ordinary colourings: for instance, Birkhoff and Lewis \cite{birkhoff1946chromatic} showed that if $G$ is a planar graph, then $G$ has at least $60 \cdot 2^{v(G)-3}$ distinct 5-colourings. Similar results were shown for list colouring; to present these results concisely, we introduce the following definition. 

\begin{definition}\label{def:expmany}
Let $\mathcal{G}$ be a class of graphs. We say $\mathcal{G}$ has \emph{exponentially many $k$-list colourings} if there exists a constant $c > 0$ such that for every graph $G \in \mathcal{G}$ and every $k$-list assignment $L$ of $G$, there exist at least $2^{c\cdot v(G)}$ distinct $L$-colourings of $G$. 
\end{definition}

Thomassen  \cite{thomassen5LC} famously showed in 1994 that planar graphs are 5-list colourable. In 1993, Voigt \cite{voigt1993list} gave a construction of a planar graph that is not 4-list colourable: thus Thomassen's result is best possible. In 2007, Thomassen \cite{thomassen2007exponentially} proved that planar graphs have exponentially many 5-list colourings (with $c = \frac{1}{9}$). Recently in 2022, it was observed by Bosek, Grytczuk, Gutowski, Serra, and Zaj\k{a}c \cite{bosek2022graph} that, using the polynomial method (discussed further in Subsection \ref{subsec:polmethod}), it is straightforward to improve this bound (to $c = \frac{\log_2 5}{4}$). 

Analogous results hold for planar graphs of higher girth and with smaller list sizes. Recall that the \emph{girth} of a graph is the length of a shortest cycle in the graph (where if the graph is a forest, we define the girth to be infinite). In 1995, Thomassen proved that planar graphs of girth at least five are 3-list colourable \cite{thomassen3LC}, and later in 2007 \cite{thomassen2007many}, that this class of graphs has exponentially many 3-list colourings (with $c = \frac{1}{10000}$). Again, it was observed by Bosek et al. \cite{bosek2022graph} that the bound can be further improved (in this case, to $c = \frac{\log_2 3}{6}$). 

One might wonder whether analogous results hold for other forms of colouring: in particular, whether these results extend to \emph{correspondence colouring.} Correspondence colouring is a natural generalization of list colouring first introduced by Dvo{\v{r}}{\'a}k and the first author in 2015 \cite{dvovrak2018correspondence}. It is defined as follows.
\begin{definition}
 Let $G$ be a graph. A \emph{$k$-correspondence assignment for $G$} is a $k$-list assignment $L$ together with a function $M$ that assigns to every edge $e = uv \in E(G)$ a partial matching $M_e$ between $\{u\}\times L(u)$ and $\{v\}\times L(v)$.
An $(L,M)$-colouring of $G$ is a function $\varphi$ that assigns to each vertex $v \in V(G)$ a colour $\varphi(v) \in L(v)$ such that for every $e = uv \in E(G)$, the vertices $(u, \varphi(u))$ and $(v, \varphi(v))$ are non-adjacent in $M_e$. We say that $G$ is $(L,M)$-colourable if such a colouring exists, and that $G$ is \emph{$k$-correspondence-colourable} if $G$ is $(L,M)$-colourable for every $k$-correspondence assignment $(L,M)$ for $G$.
\end{definition}

Since its introduction, there have been over a hundred papers on correspondence colouring; see for example \cite{bernshteyn2016asymptotic,bernshteyn2018sharp,bernshteyn2019differences,bernshteyn2017dp,liu2019dp,zhang2021edge} for a small sample. In the paper in which they introduce correspondence colouring, Dvo{\v{r}}{\'a}k and the first author observed that Thomassen's proofs of the 5-choosability of planar graphs and 3-choosability of planar graphs of girth at least five naturally carry over to the correspondence colouring framework. That is: planar graphs are 5-correspondence colourable, and planar graphs of girth at least five are 3-correspondence colourable. In light of this, it is natural to wonder whether there are exponentially many 5-correspondence colourings of planar graphs. Hence we have the following definition.

\begin{definition}\label{def:expmany2}
Let $\mathcal{G}$ be a class of graphs. We say $\mathcal{G}$ has \emph{exponentially many $k$-correspondence colourings} if there exists a constant $c > 0$ such that for every graph $G \in \mathcal{G}$ and every $k$-correspondence assignment $(L,M)$ of $G$, there exist at least $2^{c\cdot v(G)}$ distinct $(L,M)$-colourings of $G$. 
\end{definition}

Indeed in 2021, Langhede and Thomassen~\cite{langhede2021exponentially} conjectured as follows.

\begin{conj}[Langhede and Thomassen, \cite{langhede2021exponentially}]\label{thomconj}
Planar graphs have exponentially many 5-correspondence colourings.
\end{conj}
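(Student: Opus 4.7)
The plan is to prove the conjecture by strong induction on $v(G)$, using the hyperbolicity theorem for $5$-correspondence critical graphs from the authors' earlier paper as a black box. Since the hyperbolicity theorem is naturally phrased with a precoloured boundary, I would first strengthen the inductive statement to: there exist constants $c, N_0 > 0$ such that for every plane graph $G$ with $5$-correspondence assignment $(L,M)$, every short facial cycle $F$, and every partial $(L,M)$-colouring $\varphi$ of a subset $S \subseteq V(F)$ that extends to all of $G$, the number of such extensions is at least $2^{c(v(G) - v(S))}$ whenever $v(G) \geq N_0 + v(S)$.

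For the inductive step, I would set up the following local-to-global dichotomy: either $G$ contains a bounded-size \emph{reducible configuration} $H$ (a gadget near the precoloured boundary, of the sort appearing in Thomassen-style $5$-choosability arguments, adapted to the correspondence setting), or no such configuration exists in $G$. In the first case, I would reduce $G$ to a smaller plane graph $G'$ by deleting some vertices of $H$ and updating the precoloured set appropriately, so that (i) the inductive hypothesis applies to $G'$, and (ii) each extension to $G'$ lifts to at least two extensions of $(L,M)$ to $G$. Taking $c \leq 1/|V(H)|$ closes the induction, since $2 \cdot 2^{c(v(G') - v(S'))} \geq 2^{c(v(G) - v(S))}$.

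In the second case, the absence of any reducible configuration means that $G$ together with the precolouring of $S$ is critical with respect to $5$-correspondence colouring in exactly the sense required by the previous paper's hyperbolicity theorem. That theorem bounds $v(G)$ linearly in $v(S)$, so choosing $N_0$ larger than the hyperbolicity constant forces $v(G) < N_0 + v(S)$, contradicting the assumption. Hence the second case never arises, and the induction closes via the first case.

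The main obstacle is choosing the family of reducible configurations so that two compatibility properties hold simultaneously: (a) each configuration reliably produces at least two extensions in the correspondence setting, despite the fact that arbitrary matchings preclude the naive Kempe-chain recolouring arguments available for ordinary list colouring; and (b) the nonexistence of every such configuration matches precisely the notion of criticality under which the hyperbolicity theorem is proved. Reconciling these two notions — ``counting-reducibility'' on the one hand, and structural ``criticality'' on the other — is the conceptual heart of the general method and is exactly what permits the hyperbolicity theorem to be converted into a lower bound on the number of colourings.
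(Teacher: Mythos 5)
Your high-level architecture (strengthen to a precolouring-extension count, then split on whether a ``removable'' piece exists, invoking hyperbolicity when it does not) matches the paper's, but two of your steps contain genuine gaps.

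First, the quantitative bookkeeping does not close. The hyperbolicity theorem you cite as a black box is multiplicative: it bounds $v(G)$ by a constant times $v(S)$ (in the paper, $v(G)\le 51\,v(C)$). Your inductive hypothesis carries an additive threshold $v(G)\ge N_0+v(S)$, and a bound of the form $v(G)\le 51\,v(S)$ simply does not contradict $v(G)\ge N_0+v(S)$ once $v(S)$ is large --- and $v(S)$ does grow along your induction, since you ``update the precoloured set.'' This is precisely why the paper states that its plain hyperbolicity theorem is not strong enough and instead proves a \emph{deficiency} version: the exponent is not $c(v(G)-v(S))$ but $\bigl(v(G|S)-(\varepsilon^{-1}+1)\defc_3(G|S)\bigr)/67$, where $\defc_3(G|S)=e(G|S)-3v(G|S)$. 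In the ``no removable piece'' case the deficiency theorem forces $d_{3,\varepsilon}(G|S)\ge 0$, which makes the required count at most $2^0=1$, so the single assumed extension suffices; and at the very end Euler's formula gives $\defc_3(G|\emptyset)\le 0$ for planar $G$, recovering $2^{v(G)/67}$. Without the deficiency term in the exponent there is no way to absorb the multiplicative loss from hyperbolicity.

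Second, ``no reducible configuration'' does not imply ``critical.'' The paper is explicit that while $S$-criticality implies the absence of a $5$-correspondence-deletable subgraph disjoint from $S$, the converse fails; the gap is bridged by a separate argument (Proposition~\ref{notdelthencrit} plus the inductive composition in Theorem~\ref{d_gepsilonbound}) showing that non-deletability still forces a critical subgraph $G_0\supseteq S$ whose deficiencies can be chained. Relatedly, your reducible configurations are bounded-size gadgets each contributing a factor of $2$, but the paper's removable pieces are arbitrary (possibly huge) correspondence-deletable subgraphs $G-V(H)$, and the count in that case is a telescoping product of two inductive counts, not ``$2$ per gadget.'' The only place a raw multiplicative gain appears is the base case $v(G|S)=1$ with $\deg(v)\le 3$, where $|L(v)|-\deg(v)\ge 2$ colours survive; you have not exhibited any family of configurations achieving both your properties (a) and (b), and the paper's resolution is to dispense with bounded configurations entirely in favour of the deficiency accounting.
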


In \cite{langhede2021exponentially}, Langhede and Thomassen proved that planar graphs have exponentially many $\mathbb{Z}_5$-colourings; their bound was improved and the result was strengthened to the more general framework of field colourings by Bosek et al. in \cite{bosek2022graph}.  Field colouring generalizes ordinary colouring, but is not a generalization of list colouring; correspondence colouring, however, is a generalization of both list colouring and field colouring. It follows that a proof of Conjecture \ref{thomconj} would imply Langhede and Thomassen's result on exponentially many $\mathbb{Z}_5$-colourings as well as Bosek et al.'s result on field colourings (albeit with perhaps a worse value of the constant $c$ in both cases). 
One of our main results is a proof of Conjecture~\ref{thomconj}. In particular, we prove the following.

\begin{restatable}{thm}{expmanycc}\label{expmanycc}
If $G$ is a planar graph and $(L,M)$ is a 5-correspondence assignment for $G$, then $G$ has at least $2^\frac{v(G)}{67}$ distinct $(L,M)$-colourings.
\end{restatable}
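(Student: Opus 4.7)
The plan is to prove Theorem~\ref{expmanycc} by induction on $v(G)$, using the hyperbolicity theorem for $5$-correspondence-critical graphs (from the authors' earlier paper, as advertised in the abstract) as the main structural tool. To make the induction go through, I would first prove a strengthened statement that permits a bounded precoloured subgraph $S$ on the outer face, and bounds the number of $(L,M)$-colourings extending the precolouring from below by $2^{(v(G) - O(|S|))/67}$. The reason for allowing precolouring is that after removing a small reducible configuration, the ``scar'' left behind (identifications, newly precoloured neighbours, short new boundary) is most naturally tracked by a precolouring on a bounded set.

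The inductive step is driven by a search for a \emph{reducible configuration}: a subgraph $H$ of bounded size (at most roughly $67$) whose deletion yields a planar graph $G'$ with an induced correspondence assignment such that (i) the number of $(L,M)$-colourings of $G$ is at least twice the number of induced colourings of $G'$, and (ii) the new precoloured boundary is of bounded size. Standard candidates include a vertex of degree at most $3$ (which always has $\geq 2$ free colours given its neighbours), a separating $k$-cycle with $k \leq 4$ enclosing a constant-size disc, or certain short-face configurations that behave reducibly in the correspondence setting. For each such configuration, removing it decreases $v(G)$ by at most $67$ while multiplying the number of extensions by $\geq 2$, which closes the induction. The base case handles graphs with $v(G)$ up to the constant $67$, for which merely the existence of one $(L,M)$-colouring suffices; this is guaranteed by the Dvo\v{r}\'ak--Postle observation that planar graphs are $5$-correspondence-colourable.

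The crux, and the main obstacle, is showing that a reducible configuration of the required type always exists in a sufficiently large planar graph: this is precisely what the hyperbolicity theorem will buy us. In its usual form, hyperbolicity says that any $5$-correspondence-critical plane graph satisfies a linear isoperimetric inequality in terms of its precoloured boundary, so a planar graph large compared to its precoloured part and without any reducible configuration would be a critical graph violating this bound --- a contradiction. The delicate work, therefore, lies in (a) choosing the precise list of reducible configurations so that they mesh with the outputs of the hyperbolicity theorem (in particular so that ruling them out really does force the graph to be critical in the hyperbolic sense), (b) handling correspondence matchings carefully when deleting configurations, since the lack of colour symmetry means one must track the induced matchings on $G'$ rather than simply updating lists, and (c) calibrating the tradeoff between the maximum size of a reducible configuration and the minimum multiplication factor gained, which is where the explicit constant $1/67$ ultimately comes from.
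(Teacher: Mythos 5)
Your high-level plan (strengthen to a precolouring-extension statement, induct, and use the hyperbolicity of $5$-correspondence-critical graphs to handle the case where no reduction exists) matches the paper's strategy in outline, but the crux of your argument has a genuine gap, and you have also misidentified which form of the hyperbolicity theorem is needed. First, the existence of a bounded-size ``reducible configuration'' whose deletion multiplies the number of colourings by at least $2$ is precisely the hard part, and it does not follow from the hyperbolicity theorem in its usual form (Theorem~\ref{theorem:mainhypthm}). Your inference ``large, no reducible configuration $\Rightarrow$ critical $\Rightarrow$ contradiction'' is not valid: the absence of deletable configurations does not make $G$ itself $S$-critical for any assignment. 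What is true (Proposition~\ref{notdelthencrit}) is only that some subgraph $G_0\supseteq S$ is $S$-critical for some \emph{auxiliary} assignment, and one must iterate this to conclude anything about all of $G$ (Theorem~\ref{d_gepsilonbound}). Moreover, the conclusion of that iteration is not ``$G$ is small compared to $|S|$'' but ``$v(G|S)\le \varepsilon^{-1}\defc_3(G|S)$'', a bound in terms of the \emph{deficiency} $\defc_3(G|S)=e(G|S)-3v(G|S)$, i.e.\ in terms of edges as well as vertices. This is why the paper needs the deficiency-strengthened hyperbolicity theorem (Theorem~\ref{theorem:stronglinear}) rather than the plain isoperimetric version, and why the induction hypothesis of Theorem~\ref{expmanyextensions} carries the term $-(\varepsilon^{-1}+1)\defc_3(G|S)$ rather than your flat $-O(|S|)$ penalty: with a flat penalty the bookkeeping does not telescope when the precoloured part grows or when the deleted part carries many edges back into the rest of the graph.

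Second, your scheme never actually exhibits where the factor of $2$ comes from in general. A planar graph can have minimum degree $5$, so vertices of degree at most $3$ need not exist, and ``separating $4$-cycles enclosing a constant-size disc'' are likewise not guaranteed. The paper's mechanism is different: it adds a single vertex $v$ (the one with the most neighbours in $S$) to $S$ at each step, gains the factor $2$ only in the terminal case $v(G|S)=1$ with $\deg(v)\le 3$, and otherwise either routes the induction through an intermediate correspondence-deletable piece (whose colourability is certified by Thomassen's Theorem~\ref{thomtech5cc}, not by an ad hoc list of configurations) or absorbs the case $\deg_H(v)=3$ into the deficiency term via Theorem~\ref{d_gepsilonbound}. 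The exponential count then emerges from the telescoping of the deficiency-weighted bound, with the final constant coming from $e(G)\le 3v(G)$ at $S=\emptyset$ --- not from a tradeoff between configuration size and multiplication factor. To repair your proposal you would need to either prove the existence of your doubling configurations (which is not in the literature and is likely as hard as the whole theorem) or replace your induction hypothesis with the deficiency-weighted one.
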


One might wonder whether similar results hold for graphs embedded in other surfaces; in particular, for \emph{locally planar} graphs.

\begin{definition}
A \emph{non-contractible cycle} in a surface is a cycle that cannot be continuously deformed to a single point. An embedded graph is \emph{$\rho$-locally planar} if every cycle (in the graph) that is non-contractible (in the surface) has length at least $\rho$.
\end{definition}
This is closely related to the concept of \emph{edge-width}. The edge-width of an embedded graph is the length of the shortest non-contractible cycle; if a graph is $\rho$-locally planar, it has edge-width at least $\rho$.

As the name suggests, the local structure of a locally planar graph is planar. Pleasantly, locally planar graphs sometimes share colouring properties with planar graphs: in 2006, DeVos, Kawarabayashi, and Mohar \cite{devos2006locally} showed that for every surface $\Sigma$, there exists a constant $\rho= 2^{O(g)}$, where $g$ is the Euler genus of $\Sigma$, such that every $\rho$-locally planar graph that embeds in $\Sigma$ is 5-list-colourable. Thomassen proved a similar result for 5-colourability in 1993 \cite{thomassen1993five}. In \cite{postle2018hyperbolic}, Postle and Thomas showed that for $k \in \{3,4,5\}$, analogous results for $(8-k)$-list-colouring graphs of girth at least $k$ (with $\rho = \Omega(\log(g))$) are implied by the \emph{hyperbolicity} of certain associated families of graphs. \emph{Hyperbolicity} is defined below. Throughout the rest of the paper, we use $(G, \Sigma)$ to denote a graph $G$ embedded in a surface $\Sigma$.
\begin{definition}\label{def:hyp}
Let $\mathcal{F}$ be a family of embedded graphs. We say that $\mathcal{F}$ is \emph{hyperbolic} if there exists a constant $c > 0$ such that if $(G, \Sigma) \in \mathcal{F}$, then for every closed curve $\eta : S^1 \rightarrow \Sigma$ that bounds an open disk $\Delta$ and intersects $G$ only in vertices, if $\Delta$ includes a vertex of $G$, then the number of vertices of $G$ in $\Delta$ is at most $c(|\{x \in S^1 : \eta(x) \in V (G)\}| - 1)$. We say that $c$ is a \emph{Cheeger constant} for $\mathcal{F}$.
\end{definition}
Note that in the above definition, each vertex $v\in V(G)$ contributes 1 to the quantity $|\{x \in S^1 : \eta(x) \in V (G)\}|$ for each time $\eta$ intersects $v$.

Thus for each $k \in \{3,4,5\}$, if $(G,\Sigma)$ is a locally planar embedded graph of girth at least $(8-k)$, then $G$ has exponentially many distinct $L$-colourings for every $k$-list assignment $L$.  In the correspondence colouring framework, much less is known. It was shown by Kim, Kostochka, Li, and Zhu in \cite{kim2020line} and independently by the authors in \cite{esrlukelocal} that locally planar graphs are 5-correspondence colourable; and the authors also observe in \cite{esrlukelocal} that locally planar graphs of girth at least five are 3-correspondence colourable. Our results, like those of Postle and Thomas, follow from the hyperbolicity of certain associated graph families. In Section \ref{sec:locallyplanar}, we prove the following result, showing that not only is it true that locally planar graphs are $5$-correspondence colourable, but in fact they have exponentially many $5$-correspondence colourings.

\begin{restatable}{thm}{thmexpmanyloc}\label{thm:expmanylocallyplanar}
For every surface $\Sigma$, there exists a constant $\rho > 0$ with $\rho = O(\log g)$ (where $g$ is the Euler genus of $\Sigma$) such that the following holds: if $G$ is a $\rho$-locally planar graph that embeds in $\Sigma$ and $(L,M)$ is a 5-correspondence assignment for $G$, then $G$ has at least $2^\frac{v(G)}{3484}$ distinct $(L,M)$-colourings. 
\end{restatable}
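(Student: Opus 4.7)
The plan is to leverage the hyperbolicity of the family of graphs critical for $5$-correspondence colouring (the technical theorem from the authors' earlier paper \cite{esrlukelocal}) together with Theorem \ref{expmanycc}, following the general method advertised in the introduction. This parallels how Postle and Thomas lifted Thomassen's planar list-colouring results to the locally planar setting, but now adapted to count colourings rather than merely establish existence.

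First I would choose $\rho = O(\log g)$, in terms of the Cheeger constant of the hyperbolic family, so that any $\rho$-locally planar embedding $(G, \Sigma)$ admits the following planarization: a collection of $O(g)$ pairwise disjoint non-contractible simple closed curves in $\Sigma$, meeting $G$ only in vertices and in $O(\rho)$ vertices each, whose removal cuts $\Sigma$ into a topological disk. This is the standard consequence of hyperbolicity developed in \cite{postle2018hyperbolic}. The union of their intersection points is a set $B \subseteq V(G)$ with $|B| = O(g \log g)$, and $G' := G - B$ embeds in a disk, hence is planar.

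Next I would apply Theorem \ref{expmanycc} to $G'$ with the restriction of $(L, M)$, which is still a $5$-correspondence assignment, obtaining at least $2^{v(G')/67}$ distinct $(L, M)$-colourings of $G'$, where $v(G') \geq v(G) - O(g \log g)$. The crux of the argument is then to show that many of these colourings of $G'$ extend to $(L, M)$-colourings of $G$, i.e., are compatible with some colouring of $B$. Here is where hyperbolicity enters in its full strength: if too few colourings of $G'$ extended, a counting/critical-subgraph argument would produce a $5$-correspondence-critical subgraph straddling the cut disk whose size would violate the Cheeger bound for the hyperbolic family, contradicting the technical theorem of \cite{esrlukelocal}.

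The main obstacle, and where the explicit constant $1/3484$ is pinned down, is making the extension step quantitatively sharp: each cut curve contributes a logarithmic loss, which must be balanced against the gain from Theorem \ref{expmanycc}, the Cheeger constant from hyperbolicity, and the boundary effects near $B$ (handled by either further removing a thin buffer around $B$, or by judiciously precolouring $B$ using a colouring guaranteed to exist by \cite{esrlukelocal} and then passing to a restricted correspondence assignment). When $v(G) = O(g \log g)$ this reduction is no longer useful, but any single $(L, M)$-colouring (guaranteed by \cite{esrlukelocal}) already gives the required lower bound after tuning the multiplicative constant inside $\rho = O(\log g)$ appropriately.
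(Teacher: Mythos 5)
Your overall strategy differs fundamentally from the paper's, and as written it has two genuine gaps. The paper does not planarize $G$ by cutting along non-contractible curves at all. Instead it defines the family $\mathcal{F}$ of embedded graphs containing \emph{no} $\frac{1}{3484}$-exponentially-$5$-correspondence-deletable induced subgraph, proves that $\mathcal{F}$ itself is hyperbolic with Cheeger constant $52$ (Theorem \ref{thm:localplanarhypversion}, whose proof uses Theorem \ref{d_gepsilonbound} and Theorem \ref{expmanyextensions} inside a disk), and then invokes Theorem \ref{thm:smalledgewidth} to conclude that any graph of edge-width exceeding $k\log(g+1)$ cannot lie in $\mathcal{F}$; hence it contains an exponentially-deletable subgraph $H$, which is peeled off and the argument repeated on $G-V(H)$ by induction. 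No global cut system, no counting of extensions across a planarizing boundary.

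The first gap in your version is the extension step, which you correctly identify as the crux but do not actually supply. Hyperbolicity of the critical family tells you that \emph{some} colouring of a subgraph extends (or bounds the size of a critical obstruction); it gives no lower bound on \emph{how many} colourings of $G'=G-B$ extend to $B$, and a vertex of $B$ may have arbitrarily many neighbours in $G'$, so "most colourings of $G'$ extend" is simply not a consequence of the Cheeger bound. Making this quantitative is exactly what requires the deficiency machinery (Theorems \ref{theorem:stronglinear}, \ref{d_gepsilonbound}, \ref{expmanyextensions}), and those theorems count extensions \emph{into} a disk from a precoloured connected boundary, which is the opposite direction from what your cut-and-extend scheme needs; your fallback of precolouring $B$ first would reduce the lists of its neighbours in $G'$ below $5$, taking you outside the hypotheses of Theorem \ref{expmanycc}. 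The second gap is the base case: when $v(G)=O(g\log g)$ you claim a single $(L,M)$-colouring suffices "after tuning the constant in $\rho$," but the theorem demands at least $2^{v(G)/3484}$ colourings for every $\rho$-locally planar $G$, and $1<2^{v(G)/3484}$ already for $v(G)\ge 1$; no choice of $\rho$ exempts small graphs from the exponential bound. Both gaps are avoided by the paper's inductive peeling of exponentially-deletable subgraphs, which never incurs an additive $O(g\log g)$ loss.
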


We also give an analogous theorem for 3-correspondence colouring locally planar graphs of girth at least five.

Our theorems also follow from the hyperbolicity of certain graph families: thus hyperbolicity theorems can be used to show not only that planar and locally planar graphs are colourable (for some notion of colouring), but also that there exist exponentially many colourings of both planar and locally planar graphs. That exponentially many colourings follows from these hyperbolicity theorems (rather than the related notion of \emph{strong} hyperbolicity, which bounds the number of vertices in annuli rather than disks) was not known. For more interesting implications of hyperbolicity, we refer the reader to \cite{postle2018hyperbolic}.


\subsection{From Hyperbolicity to Exponentially Many Colourings: A General Method}\label{subsec:hyperbolicity}

Another contribution of this paper is a general method to show there exist exponentially many colourings (for some notion of colouring) of graphs on surfaces. Namely our method shows that deficiency versions of the hyperbolicity of the associated family of critical graphs implies exponentially many colourings of planar graphs in the class (and following the work of the first author and Thomas~\cite{postle2018hyperbolic} also of graphs in the class embedded in a fixed surface provided the class is also strongly hyperbolic).

The theory of hyperbolic families of graphs was developed by the first author and Thomas in \cite{postle2018hyperbolic}. Loosely speaking, a family of embedded graphs is \emph{hyperbolic} if every graph in the family satisfies a certain linear isoperimetric inequality (given in the definition above): roughly, that for every disk in the surface, the number of vertices in the disk is linear in the number of vertices in the boundary of the disk. Similarly, a family is \emph{strongly hyperbolic} if the number of vertices in each annulus is a linear function of the number of vertices in its two boundary components. This is defined more formally in \cite{postle2018hyperbolic}.

Per \cite{postle2018hyperbolic}, many results involving colouring graphs on surfaces are implied by the hyperbolicity of the associated class of \emph{critical} graphs (i.e.~the minimal non-colourable graphs in the class).  Pleasantly, to prove hyperbolicity, it usually suffices to prove a related result for extending a precolouring of the outer cycle of a planar graph (or two cycles, in the case of strong hyperbolicity). Hence many theorems about colouring graphs on surfaces can be reduced to theorems about planar graphs via the theory of hyperbolicity. 

The implications of such hyperbolicity-related theorems are discussed in more detail in \cite{lukeevehyperbolicity} and \cite{postle2018hyperbolic}; they include among others:~colouring theorems for locally planar (embedded) graphs, and efficient algorithms for colouring decidability of graphs embedded in fixed surfaces (per the work of Dvo{\v{r}}{\'a}k and Kawarabayashi \cite{dvovrak2013list}). Thus another main contribution of this paper is to show that such theorems can also be used to prove the existence of exponentially many colourings of planar graphs in the class. Technically, we need a \emph{deficiency} version of a hyperbolicity theorem (one that bounds the number of vertices not only in the number of boundary vertices of the disk but also the number of edges inside the disk; see Section~\ref{sec:keyresults} for a formal definition). That our results follow from these deficiency hyperbolicity theorems further motivates the study of hyperbolic families.

Hence our Theorem~\ref{expmanycc} will follow from the deficiency hyperbolicity theorem for $5$-correspondence coloring that we proved in a previous paper~\cite{lukeevehyperbolicity}. Our method can also be applied in other colouring settings where the appropriate deficiency hyperbolicity theorem is known: as another demonstration of our method, we show in Section \ref{sec:girth5} that our technique can also be used to prove there exist exponentially many 3-correspondence colourings of planar graphs of girth at least five. 

In particular, we show the following.
\begin{restatable}{thm}{expmanyccgirthfive}\label{expmanyccgirthfive}
If $G$ is a planar graph and girth at least five and $(L,M)$ is a 3-correspondence assignment for $G$, then $G$ has at least $2^\frac{v(G)}{282}$ distinct $(L,M)$-colourings.
\end{restatable}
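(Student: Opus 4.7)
The plan is to apply the general method for turning deficiency hyperbolicity into exponentially many colourings, exactly as in the proof of Theorem~\ref{expmanycc}, but with the $5$-correspondence deficiency hyperbolicity theorem replaced by its analogue for $3$-correspondence colouring of planar graphs of girth at least five. This analogue is the companion main technical theorem of \cite{lukeevehyperbolicity}: roughly, if $H$ is critical for extending a precolouring of its outer face in the $3$-correspondence setting and has girth at least five, then the number of interior vertices of $H$ is bounded above by a linear function of the perimeter of the outer face, minus a (positive) linear function of the number of interior edges. It is the presence of the deficiency term (the interior edge count) that makes the theorem strong enough to drive the counting.

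I would induct on $v(G)$, with small graphs handled as the base case. For the inductive step, the goal is to use the deficiency hyperbolicity theorem to locate a bounded-size \emph{reducible configuration} $S \subseteq V(G)$ with the property that, letting $G' = G - S$ and $(L', M')$ be the induced correspondence assignment on $G'$, every $(L', M')$-colouring of $G'$ extends to at least two distinct $(L, M)$-colourings of $G$. Applying induction to $G'$ then produces at least $2 \cdot 2^{v(G')/282} \geq 2^{v(G)/282}$ distinct $(L,M)$-colourings of $G$, where the last inequality holds provided $|S| \leq 282$. Thus the constant $1/282$ is calibrated to be exactly the reciprocal of the maximum size of a reducible configuration produced by the method, which in turn depends on the Cheeger constant appearing in the deficiency hyperbolicity theorem of~\cite{lukeevehyperbolicity}.

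The reducible configuration $S$ is extracted from the deficiency hyperbolicity theorem by an edge-counting (equivalently, discharging) argument, parallel to the one used in the proof of Theorem~\ref{expmanycc}: if no bounded-size reducible configuration existed, then some critical subgraph of $G$ would have too many interior vertices relative to both its boundary length and its interior edge count, contradicting the linear isoperimetric inequality guaranteed by the deficiency hyperbolicity theorem. In the girth-five, $3$-correspondence regime the natural candidates for $S$ are vertices of degree two (where $3 - 2 = 1$ colour is always free), short paths of $2$-vertices, and small cuts bounded by non-facial $5$- or $6$-cycles; in each case the two required distinct extensions come from elementary manipulations of a partial matching between lists of size three.

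The main obstacle, I expect, is not the inductive structure but the bookkeeping needed to match the numerical constants. One has to verify uniformly, for every type of reducible configuration produced by the discharging step, that the multiplicative gain of $2$ per extension dominates the exponent loss of $|S|/282$; and one has to check that the induced assignment $(L', M')$ remains a valid $3$-correspondence assignment on a graph of girth at least five (the latter being automatic since vertex deletion does not decrease girth). Up to these calibrations, the entire argument is a direct transplant of the $5$-correspondence method to the girth-$5$, $3$-correspondence setting, with the deficiency hyperbolicity theorem of \cite{lukeevehyperbolicity} for the latter setting supplying the only nontrivial structural input.
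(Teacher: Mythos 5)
Your approach differs substantially from the paper's, and I believe it contains a genuine gap in its central step. You propose to induct on $v(G)$ by finding, in every sufficiently large planar graph $G$ of girth at least five with no colouring shortage, a \emph{bounded-size} reducible set $S$ with $|S| \leq 282$ such that every $(L',M')$-colouring of $G - S$ extends to at least two $(L,M)$-colourings of $G$. The trouble is that the deficiency hyperbolicity theorem for $3$-correspondence colouring does not supply such a configuration, and you do not establish its existence. A graph of girth at least five could in principle have minimum degree $3$ with no vertex of degree $2$, no short path of $2$-vertices, and no small separating cycle; nothing in the hyperbolicity statement forces a degree-$2$ vertex or a small cut to appear. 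What the deficiency theorem actually says is that a \emph{critical} graph (one in which no precolouring of the boundary extends) has many interior edges relative to interior vertices; this is a global density statement, not a local unavoidability statement, and converting it into a list of bounded-size reducible configurations would require a separate discharging argument that you do not carry out. The discharging you gesture at would need to show that a graph avoiding every configuration on your list has high deficiency against the boundary \emph{everywhere}, which is precisely what the paper's Theorem~\ref{d_gepsilonboundg5} says — but the paper then uses this to track a \emph{deficiency-dependent} exponent, not to conclude that a bounded configuration exists.

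The paper instead proves a stronger precolouring-extension statement (Theorem~\ref{expmanyextensions5}): for a connected subgraph $S$ and a precolouring $\phi$ of $S$ that extends at all, the number of extensions is at least $2^{(v(G|S) - 89 \defc_5(G|S))/282}$. The induction is on $v(G|S)$, not $v(G)$, and the dichotomy is between (i) existence of a $3$-correspondence-deletable subgraph $G - V(H)$ with $S \subsetneq H \subsetneq G$ — note this deletable part may be \emph{arbitrarily large}, not bounded by $282$ — in which case one composes two inductive bounds; and (ii) no such deletable subgraph exists, in which case $H := S + v$ for a greedily chosen $v$ forces $\defc_5(G|H)$ to be large via Lemma~\ref{d_gepsilonboundg5}, making the right-hand side nonpositive so that the trivial lower bound $\log_2 E(\phi) \geq 0$ suffices. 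The constant $282$ is \emph{not} the reciprocal of a configuration size: it emerges from the worst case of the single-vertex base step $v(G|S) = 1$, where one needs
$282 \geq \frac{1 - 89(3\deg(v)-5)}{\log_2(3 - \deg(v))}$,
which at $\deg(v)=0$ evaluates to $\frac{446}{\log_2 3} \approx 281.4$. The $89 = \varepsilon^{-1}+1$ with $\varepsilon = 1/88$ interlocks with the Cheeger-type constant in Observation~\ref{girth5:stronglinear}. Your calibration story (gain of $2$ versus loss of $|S|/282$ per step) does not reflect how the actual constant is determined, and the reducible-configuration machinery you invoke is not present in the paper and is not an obvious consequence of its tools.

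There is a secondary issue as well: the paper's precolouring-extension formulation is not merely a technical convenience but is load-bearing for the later locally planar results (Theorem~\ref{thm:expmanylocallyplanarg5}), where one must extend a fixed colouring of $G'$ into $G_\Delta$. A proof of Theorem~\ref{expmanyccgirthfive} alone via your route, even if it could be made to work, would not supply Theorem~\ref{expmanyextensions5}, so the downstream applications would require a separate argument.
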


We note that the bound above (which originally appeared in the PhD thesis of the second author \cite{evethesis}) was recently improved by Dahlberg, Kaul, and Mudrock \cite{dahlberg2023algebraic} (to $c = \frac{\log_2 3}{6}$) using the polynomial method. We discuss the polynomial method further in the next subsection.

Analogously to the 5-correspondence colouring case, we further show Theorem \ref{expmanyccgirthfive} can be used to prove the theorem below.
\begin{restatable}{thm}{thmexpmanylocfive}\label{thm:expmanylocallyplanarg5}
For every surface $\Sigma$, there exists a constant $\rho > 0$ with $\rho = O(\log g)$ (where $g$ is the Euler genus of $\Sigma$) such that the following holds: if $G$ is a $\rho$-locally planar graph of girth at least five that embeds in $\Sigma$ and $(L,M)$ is a 3-correspondence assignment for $G$, then $G$ has at least $2^\frac{v(G)}{25380}$ distinct $(L,M)$-colourings. 
\end{restatable}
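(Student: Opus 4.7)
The plan is to follow the template of our proof of Theorem \ref{thm:expmanylocallyplanar} in Section \ref{sec:locallyplanar}, with the girth-five, $3$-correspondence ingredients substituted throughout. The key external input is the strong deficiency hyperbolicity theorem for the family of graphs critical for $3$-correspondence colouring of planar graphs of girth at least five, which we established in \cite{lukeevehyperbolicity}; this supplies a Cheeger-type constant $c_0$, and we choose $\rho$ to be a large enough multiple of $\log g$ in terms of $c_0$ and the constant $282$ appearing in Theorem \ref{expmanyccgirthfive}. The overall aim is to show that a $\rho$-locally planar girth-five embedded graph can be cut into a planar piece carrying almost all of its vertices, to which Theorem \ref{expmanyccgirthfive} can be applied directly.

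Given a $\rho$-locally planar embedded graph $(G,\Sigma)$ of girth at least five with $3$-correspondence assignment $(L,M)$, I would use the strong hyperbolicity structure to reduce to the planar setting. Concretely, one iteratively cuts along shortest non-contractible curves in $\Sigma$ and along curves bounding any critical substructures that appear: $\rho$-local planarity forces each such non-contractible curve to have length at least $\rho$, while strong hyperbolicity bounds the number of vertices ``consumed'' by each cut linearly in its length. After $O(g)$ cuts one reaches a planar subgraph $H \subseteq G$ with $v(H) \geq v(G) - O(\rho g)$, together with fixed colour conditions on the boundary cycles that are absorbed, in the standard way, into a new $3$-correspondence assignment on $H$ of girth at least five. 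Applying Theorem \ref{expmanyccgirthfive} to $H$ then yields at least $2^{v(H)/282}$ distinct extensions, each giving a distinct $(L,M)$-colouring of $G$. Once $v(G)$ is large compared to $g$ this is at least $2^{v(G)/25380}$, with the bounded case absorbed into the constant.

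The main obstacle is that the decomposition must be carried out in such a way that restrictions of $(L,M)$ and of the girth condition descend cleanly to the planar pieces, and so that the deficiency accumulated by the cuts is controlled tightly enough to preserve the exponential lower bound. Both issues are addressed by the deficiency formalism and the strong hyperbolicity established in \cite{lukeevehyperbolicity}: the girth condition and the correspondence framework are inherited by subgraphs with boundary, and the deficiency bound ensures that the colouring cost incurred along each boundary is proportional to its length. The final constant $25380$ then emerges from combining the factor $282$ of Theorem \ref{expmanyccgirthfive} with the overhead from the strong hyperbolicity Cheeger constant and the local planarity threshold.
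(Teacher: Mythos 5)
Your proposal diverges from the paper's argument in a way that introduces two genuine gaps. First, you lean on a \emph{strong} hyperbolicity theorem for girth-five $3$-correspondence-critical graphs to control the vertices consumed by cutting along non-contractible curves. No such strong hyperbolicity theorem is available here: the previous paper \cite{lukeevehyperbolicity} and Section \ref{sec:girth5} supply only the (deficiency) hyperbolicity statements (Observation \ref{girth5:stronglinear}, Lemma \ref{d_gepsilonboundg5}), which bound vertices in disks, not annuli. Indeed, a point emphasized in the introduction is precisely that the counting results are derived from hyperbolicity \emph{without} strong hyperbolicity. Second, even granting the cutting step, the reduction to a planar piece $H$ with $v(H) \geq v(G) - O(\rho g)$ does not deliver the theorem as stated: a colouring of the cut-open or excised planar piece is not a colouring of $G$ (the removed vertices must still be coloured, or the duplicated boundary vertices must agree), and the loss of $\Theta(g\log g)$ vertices means the bound $2^{v(G)/25380}$ fails outright when $v(G) = O(g\log g)$; the theorem has no multiplicative constant into which such a ``bounded case'' can be absorbed.

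The paper's actual route avoids surface cutting entirely. One defines $\varepsilon$-exponentially-$3$-correspondence-deletable subgraphs and proves (Theorem \ref{thm:localplanarhypversion-g5}) that the family of embedded girth-five graphs containing \emph{no} $\frac{1}{25380}$-exponentially-$3$-correspondence-deletable subgraph is hyperbolic, using Lemma \ref{d_gepsilonboundg5} together with the planar extension theorem (Theorem \ref{expmanyextensions5}) applied inside a disk. Theorem \ref{thm:smalledgewidth} of Postle and Thomas then forces every member of that family to have edge-width $O(\log g)$; hence a $\rho$-locally planar graph with $\rho$ exceeding this bound must contain an exponentially-deletable subgraph $H$. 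One deletes $H$, applies induction to $G - V(H)$ (which is still $\rho$-locally planar), and multiplies the at least $2^{v(G|H)/25380}$ colourings of $G - V(H)$ by the at least $2^{v(H)/25380}$ extensions to $H$ guaranteed by deletability. If you want to salvage your write-up, you should replace the cutting-and-reassembling step with this peeling induction, which is what makes the exact exponent $\frac{v(G)}{25380}$ come out with no loss on any vertex.
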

\subsection{Comparison to Polynomial Method}\label{subsec:polmethod}

We should remark that the proofs of the hyperbolicity theorems mentioned in the foregoing subsection are often long and technical. This might motivate the skeptical reader to wonder why one should use our method (which relies on the existence of the hyperbolicity theorems) at all, given the existence of (for instance) the polynomial method, which has been used to tackle similar questions involving counting graph colourings and often yields better bounds. Our reasoning is twofold: first, as alluded to above and explained in more detail in \cite{postle2018hyperbolic}, hyperbolicity theorems come with a host of other interesting implications. For that reason, it is of independent interest to prove them; and once they are proved, the remainders of the proofs of Theorems \ref{expmanycc} and \ref{expmanyccgirthfive} are quite concise (approximately two pages for each proof). Our method demonstrates yet another interesting implication of these hyperbolicity theorems, and in so doing further motivates their study. 

Second, our method can be applied to more colouring types and graph class combinations than the polynomial method (for instance, 5-correspondence colouring planar graphs). As mentioned in Subsections \ref{subsec:results} and \ref{subsec:hyperbolicity}, the polynomial method has been used to attain the best known bounds on $c$ for showing there are exponentially many 5-list colouring planar graphs \cite{bosek2022graph}, exponentially many 3-list colouring planar graphs of girth at least five \cite{bosek2022graph}, and exponentially many 3-correspondence colourings of planar graphs of girth at least five \cite{dahlberg2023algebraic}.

We give a brief summary of the polynomial method. In \cite{bosek2022graph} and \cite{dahlberg2023algebraic}, the authors use a slightly weaker version of a famous theorem of Alon and F{\"u}redi \cite{alon1993covering} concerning the number of non-vanishing points of a polynomial $P(x_1, x_2, \dots, x_n)$ over a set $B \subseteq \mathbb{F}^n$, where $\mathbb{F}$ is a field. In particular, they use the following.

\begin{thm}[\cite{bosek2022graph}]\label{thm:alonfuredi}
Let $\mathbb{F}$ be an arbitrary field, and let $A_1,A_2,\dots, A_n$ be any nonempty subsets of $\mathbb{F}$ with $S = \sum_{i=1}^n |A_i|$ and $t = \max |A_i|$. Let $B = A_1 \times A_2 \times \dots \times A_n$, and suppose that $P(x_1, \dots, x_n)$ is a polynomial over $\mathbb{F}$ of degree $d$ that does not vanish on all of $B$. Then the number of points in $B$ for which $P$ has a non-zero value is at least $t^{\frac{S-n-d}{t-1}}$ provided $t \geq 2$. 
\end{thm}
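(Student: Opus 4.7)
The plan is to prove the theorem by induction on $n$, buttressed by a final convexity step to produce the clean closed-form bound $t^{(S-n-d)/(t-1)}$. My first move is to reduce $P$ modulo the ideal generated by the polynomials $\prod_{a \in A_i}(x_i - a)$ for each $i$. This replaces $P$ with a polynomial whose values on $B$ are unchanged, whose total degree has not increased, and whose individual degree in each variable $x_i$ is strictly less than $|A_i|$. I would then prove the following stronger combinatorial statement: the number of points of $B$ at which $P$ is nonzero is at least $\min \prod_{i=1}^n y_i$, where the minimum ranges over integer tuples $(y_1,\dots,y_n)$ satisfying $1 \le y_i \le |A_i|$ and $\sum_{i=1}^n y_i \ge S - d$.

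For the base case $n=1$, a nonzero univariate polynomial of degree at most $d$ has at most $d$ roots in $\mathbb{F}$, hence at least $|A_1| - d$ non-zeros in $A_1$; this matches the claim with $y_1 = |A_1| - d$. For the inductive step I regard $P$ as a polynomial in $x_n$ with coefficients in $\mathbb{F}[x_1,\dots,x_{n-1}]$ and write $P = \sum_{j=0}^{k} x_n^{j} Q_j(x_1,\dots,x_{n-1})$, where $k = \deg_{x_n} P < |A_n|$ and $Q_k \not\equiv 0$. Since $\deg Q_k \le d - k$, the inductive hypothesis applied to $Q_k$ on $A_1 \times \cdots \times A_{n-1}$ yields a tuple $(y_1,\dots,y_{n-1})$ with $\sum_{i<n} y_i \ge (S - |A_n|) - (d-k)$ and at least $\prod_{i<n} y_i$ non-zeros of $Q_k$. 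At each such non-zero point, the restriction of $P$ to the corresponding fibre is a nonzero univariate polynomial in $x_n$ of degree exactly $k$, so it has at least $|A_n| - k$ non-zeros in $A_n$. Setting $y_n := |A_n| - k \ge 1$, the constraint $\sum_{i \le n} y_i \ge S - d$ is preserved, and multiplication across fibres yields the required product bound on the non-zeros of $P$.

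Finally, to convert the combinatorial minimum into the stated closed form, I would use a convexity/exchange argument. Since $\log$ is concave, the minimum of $\prod y_i$ subject to $\sum y_i \ge S - d$ and $1 \le y_i \le t$ is attained at an extreme point where each $y_i$ equals either $1$ or $t$ (with at most one intermediate coordinate). In the clean extremal case with $\ell$ coordinates equal to $t$ and $n - \ell$ equal to $1$, the sum constraint becomes $n + \ell(t-1) \ge S - d$, i.e.\ $\ell \ge (S - n - d)/(t-1)$, so the product $t^{\ell}$ is at least $t^{(S-n-d)/(t-1)}$, which is the desired bound.

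The main obstacle I anticipate is the bookkeeping in the inductive step: specifically, controlling $\deg Q_k \le d - k$ after the initial reduction step (making sure reduction modulo $\prod(x_i - a)$ has not disturbed the degree accounting in an unexpected way), and verifying that the combined constraint $\sum_{i\le n} y_i \ge S - d$ cleanly survives the dimensional reduction once we set $y_n = |A_n| - k$. A secondary subtlety is rigorously showing that the minimum of the product over the integer polytope is attained at a $\{1,t\}$-valued tuple, which is most cleanly done via a pairwise exchange lemma — replacing $(y_i, y_j)$ with $(y_i - \delta, y_j + \delta)$ when $y_i < y_j$ strictly decreases the product while preserving the sum — rather than a direct appeal to continuous concavity.
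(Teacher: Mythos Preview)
The paper does not contain a proof of this theorem: it is quoted from \cite{bosek2022graph} (itself a variant of the Alon--F\"uredi theorem \cite{alon1993covering}) and used as a black box in Subsection~\ref{subsec:polmethod}. So there is no ``paper's own proof'' to compare against.

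That said, your sketch is the standard Alon--F\"uredi argument and is correct in substance. The reduction modulo $\prod_{a\in A_i}(x_i-a)$ preserves values on $B$, does not raise total degree, and forces $\deg_{x_i} P < |A_i|$; since $P$ does not vanish on all of $B$, the reduced polynomial is nonzero. In the inductive step, writing $P=\sum_{j\le k} x_n^j Q_j$ with $k=\deg_{x_n}P<|A_n|$ gives $\deg Q_k\le d-k$ and $\deg_{x_i}Q_k<|A_i|$ for $i<n$, so the hypothesis applies cleanly to $Q_k$; the bookkeeping $\sum_{i<n} y_i \ge (S-|A_n|)-(d-k)$ plus $y_n=|A_n|-k$ indeed yields $\sum_i y_i\ge S-d$. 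For the final step, note that relaxing the upper bounds from $y_i\le |A_i|$ to $y_i\le t$ only enlarges the feasible region and hence can only lower the minimum product, so it suffices to bound the relaxed problem. Your exchange lemma is stated correctly (moving mass from the smaller to the larger coordinate decreases the product), and the residual case with one intermediate coordinate $m\in(1,t)$ is handled by the inequality $m\ge t^{(m-1)/(t-1)}$, which follows from the monotonicity of $x\mapsto (\log x)/(x-1)$.
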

This theorem is the main tool. The general method is to then associate a graph $G$ with vertex set $\{v_1, \dots, v_n\}$ and list assignment $L(v_i) = A_i$ for all $i \in \{1, \dots, n\}$ a polynomial $P(x_1, \dots, x_n) = \prod_{v_iv_j \in E(G) } (x_i-x_j)$.  For a point $(a_1, \dots, a_n) \in A_1 \times \dots \times A_n$, we have that $P(a_1, \dots, a_n)$ is non-vanishing if and only if $a_i \neq a_j$ for all $v_iv_j \in E(G)$. Since  $a_i \in A_i = L(v_i)$ for all $i \in \{1, \dots, n\}$, the point $(a_1, \dots, a_n)$ corresponds to a valid $L$-colouring of $G$. If $G$ is planar, the value of $d$ (which corresponds to $e(G)$) is appropriately bounded and it then follows from Theorem \ref{thm:alonfuredi} that there are exponentially many $L$-colourings.

In \cite{dahlberg2023algebraic}, the authors define a different polynomial to that described above; their polynomial encodes 3-correspondence colouring, rather than the more straightforward 3-list colouring. For a graph $G$ with correspondence assignment $(L,M)$, we may assume without loss of generality that every vertex $v$ has $L(v) = \mathbb{F}_3$, the finite field with three elements. Recall that in the case of correspondence colouring, the potential conflicts between vertices $u,v$ with $uv \in E(G)$ are described by a partial matching $M_{uv}$ between $L(v)$ and $L(u)$. Since $G$ has the fewest possible $(L,M)$-colourings when $M_{uv}$ is a perfect matching for all $uv \in E(G)$, it follows that when lower-bounding the number of $(L,M)$-colourings of $G$ we may assume that $M_{uv}$ describes a permutation $\sigma$ of $\mathbb{F}_3$. The authors observe that for each such permutation $\sigma$, either $z -\sigma(z)$ is the same for all $z \in \mathbb{F}_3$, or $z + \sigma(z)$ is the same for all $z \in \mathbb{F}_3$. This motivates defining the factor of the polynomial $P$ associated with $v_iv_j \in E(G)$ as $(x_i+(-1)^c x_j-a)$, where $c$ and $a$ are chosen so that this factor is zero if and only if the values of $x_i$ and $x_j$ are matched in $M_{v_iv_j}$. However, this strategy does not work in the case of 5-correspondence colouring, as no such observation concerning $\sigma$ (and resulting in linear factors for each of the edges in the graph) holds.  Though the polynomial method used by Dahlberg, Kaul, and Mudrock \cite{dahlberg2023algebraic} and Bosek et al. \cite{bosek2022graph} gives better explicit bounds on the number of colourings than our method or the more ad-hoc approaches of Thomassen \cite{thomassen2007exponentially, thomassen2007many}, our method does not have the same algebraic constraints, and thus applies to more diverse notions of colouring.

In addition, as we show in Section \ref{sec:locallyplanar}, we can use our hyperbolicity results to extend  these theorems counting planar colourings to counting colourings of locally planar graphs.


\subsection{Outline of Paper}\label{subsec:paperoutline}
In Subsection \ref{subsec:methodoverview}, we give a brief overview of our method and the proofs of Theorems \ref{expmanycc} and \ref{expmanyccgirthfive}. Section \ref{sec:keyresults} contains several key results (some from previous papers) which will be used in the remainder of the paper. In particular, Subsection \ref{subsec:deficiencytheorems} contains theorems related to deficiency and hyperbolicity, and Subsection \ref{subsec:corr-delsub} introduces \emph{correspondence-deletable subgraphs} and uses the results from Subsection \ref{subsec:deficiencytheorems} to derive further useful tools for the proof of Theorem \ref{expmanycc}. Section \ref{sec:girth3} contains a proof of Theorem \ref{expmanycc}. The tools required for the proof of Theorem \ref{expmanyccgirthfive} are found in Subsection \ref{subsec:toolsgirth5}; the proof of Theorem \ref{expmanyccgirthfive} is in Subsection \ref{subsec:proofsgirth5}. Section \ref{sec:locallyplanar} contains the proofs of Theorems \ref{thm:expmanylocallyplanar} and \ref{thm:expmanylocallyplanarg5}. Finally, Section \ref{sec:conclusion} contains a discussion on further directions.

\subsection{Proof and Method Overview}\label{subsec:methodoverview}

Our main theorem is Theorem \ref{expmanycc} (proved in Section \ref{sec:girth3}). In fact, we prove a more technical version of Theorem \ref{expmanycc}\textemdash Theorem \ref{expmanyextensions}\textemdash which involves counting the number of extensions of a precoloured connected subgraph $S$ of a planar graph $G$ to a 5-correspondence colouring of $G$ itself. We show that if the precolouring of $S$ has at least one extension, then it has exponentially many extensions. The precise number of extensions is counted in terms of the deficiency of $G$, defined more formally in Section \ref{sec:keyresults}. This stronger, deficiency version of Theorem \ref{expmanycc} is more amenable to an inductive proof, as will be clear to the reader in Section \ref{sec:girth3}. Analogously, in Section \ref{sec:girth5}, we prove Theorem \ref{expmanyccgirthfive} via Theorem \ref{expmanyextensions5}, which concerns extending a precolouring of a connected subgraph $S$ of a planar graph $G$ of girth at least five to a 3-correspondence colouring of $G$. 

Both the proof of Theorem \ref{expmanyextensions} and that of Theorem \ref{expmanyextensions5} proceed by induction on $v(G) - v(S)$. There are essentially two cases to consider: either there exists a subgraph $H$ with $S \subsetneq H \subsetneq G$ and $v(S)<v(H)<v(G)$ such that every colouring of $H$ extends to a colouring of $G$, or no such subgraph $H$ exists. In the former case, by our choice of $G$, since $v(S)<v(H)<v(G)$ and $S \subsetneq H \subsetneq G$, we have that both that $v(H)-v(S) < v(G) -v(S)$ and that $v(G)-v(H)< v(G)-v(S)$. Theorem \ref{expmanyextensions} then follows by induction by first counting the extensions of the colouring of $S$ to $H$, and then of each colouring of $H$ to $G$. 

In the latter case where no such subgraph $H$ exists, we use our deficiency hyperbolicity theorems (Theorems \ref{theorem:stronglinear} and \ref{girth5:stronglinear}) to show that $G$ has relatively high deficiency with respect to $S$ (see Theorem \ref{d_gepsilonbound} in the case of 5-correspondence colouring and Theorem \ref{d_gepsilonboundg5} in the case of 3-correspondence colouring).  We then pick a vertex $v$ outside $S$ with the most neighbours in $S$, and then consider extending the precolouring of $S$ first to $S+v$ and then to $G$. 
Since $S + v$ does not have the properties of a graph $H$ as described in the previous paragraph, we use the deficiency theorems to bound the deficiency of the whole graph, at which point the formula follows since there is at least one extension by assumption. Other key tools are Theorems  \ref{thomtech5cc} and \ref{thomtech3cc} (in the girth three and five case, respectively), which are Thomassen's stronger inductive theorems that imply that planar graphs are 5-correspondence colourable, and that planar graphs of girth at least five are 3-correspondence colourable. These help bound the deficiency in the final steps of the proofs. We note the case where $V(G) = V(S) \cup \{v\}$ is especially important, as it helps determine the coefficient in the exponent of Theorems \ref{expmanycc} and \ref{expmanyccgirthfive}. 

The general method then may be described as follows: the first step is to define the correct notion of deficiency and deletability for the graph class and type of colouring with which we are working. We then prove a deficiency version of a hyperbolicity theorem for the critical graphs in the class under study, and use this to derive a bound on the deficiency of graphs that do not contain deletable subgraphs (see Definition \ref{def:deletable}). One final key tool is a theorem analogous to Theorem \ref{thomtech5cc}.


\section{Our Toolbox}\label{sec:keyresults}

In this section, we develop the tools required for the proof of Theorem \ref{expmanycc}. In particular, Subsection \ref{subsec:deficiencytheorems} contains our hyperbolicity theorem for 5-correspondence colouring and its stronger, deficiency version from our previous paper \cite{lukeevehyperbolicity}. This will be our main tool in proving Theorem \ref{expmanycc}. Subsection \ref{subsec:corr-delsub} introduces the notion of \emph{correspondence-deletable subgraphs}, and derives from Theorem \ref{theorem:stronglinear} (our deficiency hyperbolicity theorem for 5-correspondence colourings) useful results concerning said deletable subgraphs.  The analogous tools for 3-correspondence colouring used in the proof of Theorem \ref{expmanyccgirthfive} are found in Section \ref{sec:girth5} (Subsection \ref{subsec:toolsgirth5}). 

\subsection{Deficiency Theorems for Hyperbolicity}\label{subsec:deficiencytheorems}


This subsection introduces our main tool, Theorem \ref{theorem:stronglinear} (from one of our previous papers \cite{lukeevehyperbolicity}). Theorem \ref{theorem:stronglinear} concerns the hyperbolicity of a specific class of \emph{critical} graphs. We recall the following definition.
\begin{definition}
Let $G$ be a planar graph, $S$ a proper subgraph of $G$, and $(L,M)$ a correspondence assignment for $G$. We say $G$ is \emph{$S$-critical with respect to $(L,M)$} if for every proper subgraph $G' \subset G$ such that $S \subseteq G'$, there exists an $(L,M)$-colouring of $S$ that extends to an $(L,M)$-colouring of $G'$, but does not extend to an $(L,M)$-colouring of $G$. If the correspondence assignment is clear from the context, we shorten this and say that $G$ is \emph{$S$-critical.}
\end{definition}

The main hyperbolicity theorem presented in \cite{lukeevehyperbolicity} is as follows.
\begin{thm}\cite{lukeevehyperbolicity}\label{theorem:mainhypthm}
Let $\varepsilon = \frac{1}{50}$, let $G$ be a 2-connected plane graph, let $C$ be the outer cycle of $G$, and let $(L,M)$ be a 5-correspondence assignment for $G$. If $G$ is $C$-critical with respect to $(L,M)$, then $v(G) \leq \frac{1+\varepsilon}{\varepsilon}\cdot v(C)$.
\end{thm}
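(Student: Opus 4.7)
The plan is to prove Theorem \ref{theorem:mainhypthm} by a discharging argument rooted in Euler's formula, using structural reducibility lemmas for $C$-critical graphs under $5$-correspondence assignments. The guiding intuition is that criticality forces the interior of $G$ to be dense and nearly triangulated, while the absence of certain reducible configurations ensures that no internal vertex or face can absorb too much negative charge; the resulting deficit must therefore be concentrated on $C$.

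First, I would assemble a library of reducibility lemmas. These are the correspondence-colouring analogues of the Thomassen-style tools used to prove $5$-correspondence colourability of planar graphs. Concretely, one shows that if $G$ is $C$-critical then: (i) every vertex of $V(G)\setminus V(C)$ has degree at least $5$; (ii) no chord of $C$ separates off a structure which can be recoloured (so short separating cycles are controlled); (iii) $C$ is an induced cycle except for controlled chords; and (iv) certain small interior configurations (for instance, a $4$-vertex whose neighbourhood touches $C$ in a prescribed way, or clusters of triangles meeting at a low-degree vertex) are forbidden. Each reducibility lemma is proved by taking the unique extension obstructed at a boundary configuration, deleting a small piece, extending the colouring on the strictly smaller graph via criticality, and recolouring the deleted piece through the matchings. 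The main obstacle here is precisely this step: correspondence matchings can obstruct recolouring even when list-colouring arguments would succeed, so the case analysis of available matching patterns on $2$- or $3$-vertex configurations is delicate and forms the bulk of the work.

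Next, I would set up the discharging. Assign initial charge $\mu(x)=d(x)-4$ to each vertex $x$ and $\mu(f)=\ell(f)-4$ to each face $f$; by Euler's formula $\sum_{v}\mu(v)+\sum_{f}\mu(f)=-8$. Separate contributions from $V(C)$ and the outer face from those of interior vertices and bounded faces. Design rules that push charge from faces of length at least $5$ and from $C$ toward internal vertices of degree exactly $5$ and toward triangles in the interior, with fractional amounts calibrated so that, after discharging, every internal vertex and every internal face has final charge at least $\varepsilon=\tfrac{1}{50}$. This calibration is exactly what motivates the constant $1/50$: the ``worst case'' interior element should receive precisely $\varepsilon$, not more.

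Finally, after verifying via the reducibility lemmas that every forbidden local configuration is ruled out (so the discharging analysis is valid), sum the final charges. Letting $n_{\mathrm{int}}=v(G)-v(C)$ and $f_{\mathrm{int}}$ the number of bounded internal faces, we obtain
\[
-8 \;\geq\; \varepsilon\bigl(n_{\mathrm{int}}+f_{\mathrm{int}}\bigr) \;-\; \bigl(\text{charge exported from }C\text{ and outer face}\bigr),
\]
and a separate boundary analysis (bounding the charge that $C$ plus the outer face can contribute) yields an upper bound of the form $v(C)+O(1)$ on the exported charge. Rearranging gives $v(G)-v(C)\leq \tfrac{1}{\varepsilon}\,v(C)$, that is $v(G)\leq \tfrac{1+\varepsilon}{\varepsilon}\,v(C)$. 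I expect the principal difficulty to lie not in the discharging bookkeeping but in the correspondence-colouring reducibility lemmas that justify it; and to a lesser degree in tuning the rules so that the single constant $\varepsilon=1/50$ suffices uniformly across all the interior configurations that survive reduction.
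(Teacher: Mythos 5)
Your proposal is an outline rather than a proof: every step that carries the mathematical weight --- the correspondence-colouring reducibility lemmas in (i)--(iv), the actual discharging rules, and the verification that each surviving interior configuration ends with charge at least $\varepsilon$ --- is deferred, and you acknowledge as much. Beyond incompleteness, the strategy itself has a structural problem. In a $C$-critical graph the only cheaply available reduction is that interior vertices have degree at least $5$; but in a triangulated interior an interior vertex of degree $5$ has initial charge $+1$ while its five incident triangles carry $-1$ each, so the local charge density in the interior is negative (indeed $\sum_{f}(\ell(f)-4)\approx -2v$ against $\sum_{v}(d(v)-4)\approx +2v$ only when the average degree is $6$, not $5$). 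Hence no bounded-radius redistribution can leave \emph{every} interior vertex and face with final charge $+\varepsilon$ unless charge is imported from the boundary, and the amount needed grows with the number of interior elements --- which contradicts the $v(C)+O(1)$ export bound your final inequality also requires. This tension is precisely why no pure local-discharging proof of this linear isoperimetric inequality is known, for list or for correspondence colouring; the known arguments (including the one in \cite{lukeevehyperbolicity}) instead run a global induction on a potential function, proving the stronger deficiency statement $d_{3,\varepsilon}(G|C)\geq 0$ of Theorem \ref{theorem:stronglinear} by analysing critical subgraphs, chords and separating cycles.

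Within the present paper the statement is not reproved from scratch at all: it is deduced in three lines from Theorem \ref{theorem:stronglinear}. Since $d_{3,\varepsilon}(G|C)\geq 0$, Observation \ref{deficiencyobs} gives $v(G|C)\leq \varepsilon^{-1}\defc_3(G|C)$, and Euler's formula for a $2$-connected plane graph with outer cycle $C$ gives $\defc_3(G|C)\leq v(C)-3$, whence $v(G)\leq v(C)+\varepsilon^{-1}v(C)=\frac{1+\varepsilon}{\varepsilon}v(C)$. If you want a self-contained argument, the realistic target is the deficiency version via the potential method, not a discharging calibration of $\varepsilon=\tfrac{1}{50}$.
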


Theorem \ref{theorem:mainhypthm} is functionally equivalent to proving the hyperbolicity of the family of graphs that are $\emptyset$-critical for some $5$-correspondence assignment and hence can be used to obtain the various applications of hyperbolicity to described in the introduction to $5$-correspondence colouring. Further discussion on this topic can be found in \cite{lukeevehyperbolicity}.


However, Theorem \ref{theorem:mainhypthm} is not quite strong enough to obtain our result lower-bounding the number of 5-correspondence colourings of planar graphs. To prove Theorem \ref{expmanycc}, we need a stronger theorem which bounds $v(G)$ not only in terms of $v(C)$, but also in terms of $e(G)-e(C)$. To this end, we define the notion of \emph{deficiency}. In the following definition (and indeed the rest of the paper), for a graph $G$ and subgraph $H \subseteq G$, we will use the notation $v(G|H):= v(G)-v(H)$ and $e(G|H) := e(G)-e(H)$.

\begin{definition}
Let $G$ be a graph, and $H$ a subgraph of $G$. For a positive integer $g \geq 3$, we define the \emph{$g$-deficiency of $G$ with respect to $H$} as $\defc_g(G|H) := (g-2)e(G|H)-g\cdot v(G|H)$.
\end{definition}

The following quantity is also useful.
\begin{definition}
Let $G$ be a graph, and $H$ a subgraph of $G$. For a positive integer $g \geq 3$ and positive $\varepsilon \in \mathbb{R}$, we define $d_{g,\varepsilon}(G|H) : = (g-2)e(G|H) - (g+\varepsilon)v(G|H)$. Equivalently, $d_{g, \varepsilon} (G|H) := \defc_g(G|H)-\varepsilon \cdot v(G|H)$.
\end{definition}

(Note this nearly matches the definition of $d(\cdot)$ given in \cite{lukeevehyperbolicity}, ignoring the $b(\cdot)$ and $q(\cdot)$ terms.) 

We will require the following results from our previous paper \cite{lukeevehyperbolicity}. Theorem \ref{theorem:stronglinear} will be used in the proof of Theorem \ref{expmanycc}, and Observation \ref{girth5:stronglinear}, in the proof of Theorem \ref{expmanyccgirthfive}.  Theorem \ref{theorem:stronglinear} is a weaker version of one of the main theorems in \cite{lukeevehyperbolicity} (Theorem 3.21), ignoring the boundary and quasiboundary terms (and using $\alpha = \frac{1}{25}$ and $\gamma = \frac{7}{10}$).

The deficiency version of our hyperbolicity theorem is as follows.
\begin{thm}\label{theorem:stronglinear}

Let $\varepsilon = \frac{1}{50}$, let $G$ be a 2-connected plane graph, let $C$ be the outer cycle of $G$, and let $(L,M)$ be a 5-correspondence assignment for $G$. If $G$ is $C$-critical with respect to $(L,M)$ and $v(G|C)\ge 2$, then $d_{3, \varepsilon}(G|C)\geq \frac{23}{10}$.
\end{thm}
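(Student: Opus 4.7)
The plan is to combine Euler's formula for the plane embedding with structural restrictions that $C$-criticality forces on $G$ for a $5$-correspondence assignment, then redistribute charge via a discharging argument to obtain the refined linear inequality. Theorem~\ref{theorem:mainhypthm} already gives the raw isoperimetric bound $v(G|C)\le 50\,v(C)$, so the real task is to translate this vertex bound into a bound on $e(G|C)$ with the correct multiplicative slack $\varepsilon=\tfrac{1}{50}$ and additive slack $\tfrac{23}{10}$.

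First, I would assemble the reducible-configuration lemmas coming from $C$-criticality. Criticality forces every vertex in $V(G)\setminus V(C)$ to have degree at least $5$ (else one could delete it, extend by induction using the criticality hypothesis, and recover a colour through the correspondence), and rules out a short catalogue of further configurations (adjacent interior degree-$5$ vertices of certain forbidden types, short separating paths, small separating cycles) in analogy with Thomassen's $5$-list-colouring reductions. The correspondence-colouring versions of these reductions form the technical core of the authors' earlier paper \cite{lukeevehyperbolicity} and must replace any common-colour identification step with a matching argument in the relevant $M_e$.

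Second, I would run a discharging argument on the plane embedding. Assigning each vertex initial charge $\deg(v)-4$ and each face initial charge $\ell(f)-4$ gives, via Euler's formula, a total charge of $-8$. Isolating the outer face's contribution of $v(C)-4$ and expanding the vertex sum via $\sum_{v}\deg(v)=2v(C)+2e(G|C)$, one obtains a clean identity linking $\defc_3(G|C)$ to the sum of internal face charges; equality in the triangulated case already yields $\defc_3(G|C)=v(C)-3$. The discharging rules then transfer charge from long internal faces and higher-degree interior vertices to the degree-$5$ interior vertices, in such a way that the reducible configurations from the previous step guarantee every interior vertex and every internal face ends non-negative. All remaining deficit lands on the boundary term $v(C)$ together with a small fixed base-case slack, and after rearrangement this gives $e(G|C)-(3+\tfrac{1}{50})v(G|C)\ge\tfrac{23}{10}$ under the hypothesis $v(G|C)\ge 2$.

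The principal obstacle is the correspondence-specific reducibility analysis: unlike in list colouring, one cannot identify two neighbouring vertices by appeal to a common colour, so each potential reduction must be verified by exhibiting a suitable (near-)matching in each $M_e$, considerably multiplying the case analysis. In practice, rather than rerun the entire discharging proof, I would deduce Theorem~\ref{theorem:stronglinear} directly from \cite[Theorem~3.21]{lukeevehyperbolicity} by specialising to $\alpha=\tfrac{1}{25}$ and $\gamma=\tfrac{7}{10}$ and dropping the boundary and quasiboundary terms, exactly as flagged in the paragraph preceding the theorem statement; the constant $\tfrac{23}{10}$ is inherited from the base-case analysis of the smallest $C$-critical configurations with $v(G|C)\ge 2$ in that argument.
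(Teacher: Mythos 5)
Your proposal is correct and ultimately takes the same route as the paper: the paper gives no self-contained proof of Theorem~\ref{theorem:stronglinear}, stating only that it is a weakening of Theorem~3.21 of \cite{lukeevehyperbolicity} obtained by dropping the boundary and quasiboundary terms and specialising to $\alpha=\tfrac{1}{25}$, $\gamma=\tfrac{7}{10}$, which is exactly the deduction you settle on in your final paragraph. Your first two paragraphs are a plausible sketch of the discharging machinery inside \cite{lukeevehyperbolicity}, but they are not needed here and are not what this paper does.
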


The following easy observation follows directly from our definitions.
\begin{obs}\label{deficiencyobs}
Let $G$ be a graph; let $H$ be a subgraph of $G$; and let $\varepsilon > 0$. If $d_{3, \varepsilon}(G|H) \geq 0$, then $v(G|H) \leq \varepsilon^{-1}\cdot \defc_3(G|H)$.
\end{obs}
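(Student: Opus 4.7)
The plan is to just unwind the definitions; there is no real content here beyond rearranging an inequality. Recall that by definition we have
\[
d_{3, \varepsilon}(G|H) \;=\; \defc_3(G|H) \;-\; \varepsilon \cdot v(G|H).
\]
So the hypothesis $d_{3, \varepsilon}(G|H) \geq 0$ is literally the statement $\defc_3(G|H) \geq \varepsilon \cdot v(G|H)$.

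Since $\varepsilon > 0$, I can divide both sides by $\varepsilon$ without flipping the inequality, obtaining $v(G|H) \leq \varepsilon^{-1} \cdot \defc_3(G|H)$, which is precisely the conclusion. No case analysis, no auxiliary lemma, and no reference to the hyperbolicity theorem (Theorem \ref{theorem:stronglinear}) is needed; the observation is purely formal and is presumably stated here only so that, in the subsequent proofs, one can invoke it in one step to convert a lower bound on $d_{3,\varepsilon}$ into an upper bound on $v(G|H)$ in terms of $\defc_3(G|H)$.

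The only potential subtlety is making sure no sign error creeps in, i.e., that $d_{3,\varepsilon}$ really equals $\defc_3 - \varepsilon \cdot v$ (and not, say, $\defc_3 + \varepsilon \cdot v$), but this is exactly how $d_{3,\varepsilon}(G|H)$ was defined in the preceding definition. Thus the entire proof is a single line, and I do not anticipate any obstacle.
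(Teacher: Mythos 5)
Your proof is correct and is exactly the paper's argument: unwind the definition $d_{3,\varepsilon}(G|H) = \defc_3(G|H) - \varepsilon\cdot v(G|H)$, use the hypothesis to get $\varepsilon\cdot v(G|H) \leq \defc_3(G|H)$, and divide by $\varepsilon > 0$. Nothing further is needed.
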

\begin{proof}
By definition, $d_{3, \varepsilon}(G|H) = \defc_3(G|H)-\varepsilon \cdot v(G|H)$. Since $d_{3,\varepsilon }(G|H) \geq 0$, it follows that $0 \leq \defc_3(G|H)-\varepsilon \cdot v(G|H)$. Note that $\varepsilon > 0$; by isolating $v(G|H)$, we obtain the desired result.
\end{proof}
Note that the fact that $d_{3, \varepsilon}(G|C) \geq 0$ in Theorem \ref{theorem:stronglinear} implies Theorem \ref{theorem:mainhypthm}: using Euler's formula for graphs embedded in the plane and the definition of deficiency, we have that $\defc_3(G|C) \leq v(C)-3$. Combining this and Observation \ref{deficiencyobs}, one obtains Theorem \ref{theorem:mainhypthm}.

\subsection{Correspondence-Deletable Subgraphs} \label{subsec:corr-delsub}

This subsection introduces \emph{correspondence-deletable} graphs, defined below. A related notion (\emph{deletable graphs}) was defined for list colouring in \cite{postlelocalalgs}. We then show how to use our deficiency hyperbolicity theorem (Theorem \ref{theorem:stronglinear}) to obtain useful results concerning correspondence-deletable subgraphs. 
\begin{definition}\label{def:deletable}
Let $G$ be a graph, and let $H$ be an induced nonempty subgraph of $G$. We say $H$ is \emph{$r$-correspondence-deletable} if for every correspondence assignment $(L,M)$ of $H$ such that $|L(v)| \geq r-(\deg_G(v) - \deg_H(v))$ for each $v \in V(G)$, the graph $H$ has an $(L,M)$-colouring no matter the correspondence assignment $(L,M)$. 
\end{definition}

Note that if a graph is $r$-correspondence-deletable, it follows that it is also $r$-deletable (the list colouring analogue, defined in \cite{postlelocalalgs}). The converse, however, does not hold. 

If $H$ is an $r$-correspondence-deletable subgraph of $G$, then every $(L,M)$-colouring of $G\setminus V(H)$ extends to an $(L,M)$-colouring of $H$. In fact, the definition above captures an even stronger notion: for an $r$-correspondence-deletable subgraph $H \subseteq G$, an $(L,M)$-colouring of $G\setminus V(H)$ extends to an $(L,M)$-colouring of $H$ no matter the correspondence assignment $(L,M)$ and no matter the structure of $G \setminus V(H)$.

We highlight the relationship between critical graphs and deletable graphs: let $G$ be a graph, and $S$ a subgraph of $G$. If there exists a 5-correspondence assignment $(L,M)$ such that $G$ is $S$-critical with respect to $(L,M)$, then $G$ has no 5-correspondence-deletable subgraph that is vertex-disjoint from $S$. The converse does not hold. However, we can still use our deficiency hyperbolicity theorem to show that $G$ has high deficiency in this case. This result (below) will be instrumental in the proof of Theorem \ref{expmanycc}. Theorem \ref{d_gepsilonbound} will be proved later in this subsection.

\begin{restatable}{thm}{d_gepsilonbound}\label{d_gepsilonbound}
 Let $G$ be a plane graph, and let $\varepsilon$ be as in Lemma \ref{Hcritical}. If $H$ is a connected subgraph of $G$ such that there does not exist $X \subseteq V(G) \setminus V(H)$ such that $G[X]$ is 5-correspondence-deletable in $G$, then $d_{3,\varepsilon}(G|H) \geq 0$. 
\end{restatable}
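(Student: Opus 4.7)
The plan is to convert the hypothesis about non-existence of deletable subgraphs into $H$-criticality, and then apply the deficiency hyperbolicity theorem (Theorem~\ref{theorem:stronglinear}) after a face decomposition along $H$. First, I would use Lemma~\ref{Hcritical}---which presumably fixes $\varepsilon = \frac{1}{50}$, in agreement with Theorem~\ref{theorem:stronglinear}---to exploit the hypothesis directly: because no $X \subseteq V(G) \setminus V(H)$ yields a 5-correspondence-deletable $G[X]$, there is a 5-correspondence assignment $(L,M)$ of $G$ together with an $(L,M)$-colouring of $H$ that extends to every proper subgraph of $G$ containing $H$ but not to $G$ itself. In other words, the hypothesis supplies a 5-correspondence assignment under which $G$ is $H$-critical.

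If $V(G) = V(H)$, then $v(G|H) = 0$ and so $d_{3,\varepsilon}(G|H) = e(G|H) \geq 0$ holds trivially. Otherwise, I would reduce to the setting of Theorem~\ref{theorem:stronglinear} by decomposing along the plane embedding of $H$. Because $H$ is connected, each face $F$ of $H$ (in the embedding inherited from $G$) has a well-defined boundary walk $W_F$; let $G_F$ be the subgraph of $G$ lying in the closed disk bounded by $W_F$. The restricted correspondence assignment makes each $G_F$ into a $W_F$-critical plane graph. A standard surgery---duplicating each cut vertex of $H$ where it appears twice along $W_F$ so that $W_F$ becomes a simple outer cycle and $G_F$ becomes 2-connected, without altering $v(G_F|W_F)$ or $e(G_F|W_F)$---brings each face piece into the hypothesis of Theorem~\ref{theorem:stronglinear}. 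For each face with $v(G_F|W_F) \geq 2$, Theorem~\ref{theorem:stronglinear} yields $d_{3,\varepsilon}(G_F|W_F) \geq \frac{23}{10}$; the faces with at most one interior vertex are handled by a direct check giving $d_{3,\varepsilon}(G_F|W_F) \geq 0$.

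The final step is to sum over faces. Every vertex of $V(G) \setminus V(H)$ lies in the interior of exactly one face of $H$, and every edge of $E(G) \setminus E(H)$ lies in the closed disk of exactly one face (chords of $H$ being attributed to whichever face they subdivide). Hence the total deficiency decomposes as $d_{3,\varepsilon}(G|H) = \sum_F d_{3,\varepsilon}(G_F|W_F)$, which is nonnegative by the previous step.

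The main obstacle is the careful execution of the face decomposition: because $H$ need not be 2-connected, its facial walks can repeat cut vertices of $H$, and the restricted correspondence assignment must be transferred to each face piece so that the 2-connected, $C$-critical hypotheses of Theorem~\ref{theorem:stronglinear} are genuinely satisfied on each $G_F$. Treating chords of $H$ and verifying the small-face cases without leaking any deficiency will be the technical bookkeeping to watch.
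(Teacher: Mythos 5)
Your opening step contains a genuine gap: the hypothesis that no $X \subseteq V(G)\setminus V(H)$ induces a 5-correspondence-deletable subgraph does \emph{not} supply a correspondence assignment under which $G$ itself is $H$-critical. The paper is explicit that only one direction holds: criticality of $G$ implies the absence of deletable subgraphs disjoint from $H$, but the converse fails ($G$ may contain redundant edges or vertices beyond a critical core). What the hypothesis actually yields, via Proposition~\ref{notdelthencrit} applied with $X = V(G)\setminus V(H)$, is a possibly proper subgraph $G_0$ of $G$ containing $H$ and an assignment under which $G_0$ is $H$-critical. Your entire face-decomposition argument is then applied to the wrong graph: even granting it, you would only conclude $d_{3,\varepsilon}(G_0|H)\geq 0$, and you have no mechanism for controlling $d_{3,\varepsilon}(G|G_0)$, i.e.\ the vertices and edges of $G$ outside the critical core.

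The intended argument closes this gap by induction on $v(G|H)+e(G|H)$: obtain $G_0$ from Proposition~\ref{notdelthencrit}, apply Lemma~\ref{Hcritical} to get $d_{3,\varepsilon}(G_0|H)\geq 0$, note that $G_0$ is connected and that the non-deletability hypothesis is inherited by the pair $(G,G_0)$, apply the induction hypothesis to get $d_{3,\varepsilon}(G|G_0)\geq 0$, and conclude by the additivity $d_{3,\varepsilon}(G|H)=d_{3,\varepsilon}(G|G_0)+d_{3,\varepsilon}(G_0|H)$. A secondary point: your face decomposition and cut-vertex surgery essentially re-derive Lemma~\ref{Hcritical}, which the statement tells you to take as given (it is exactly the bridge from $H$-criticality of a connected subgraph to the 2-connected, outer-cycle setting of Theorem~\ref{theorem:stronglinear}); the new content needed here is the criticality-versus-deletability distinction and the telescoping induction, not the planar surgery.
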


We remark a list colouring version of Theorem \ref{d_gepsilonbound} was proved in \cite{postlelocalalgs} (Lemma 5.22). To prove Theorem \ref{d_gepsilonbound}, we will need the following two results. Again, list colouring versions of these results are proved in \cite{postlelocalalgs} (Theorem 5.20 and Proposition 5.21). The proof of Lemma \ref{Hcritical} is nearly identical to its list colouring analogue except for the use of Theorem \ref{theorem:stronglinear} (instead of a list colouring version of this theorem).

\begin{lemma}\label{Hcritical}
There exists $\varepsilon > 0$ such that following holds: if $G$ is a plane graph, $H$ is a connected subgraph of $G$, and there exists a 5-correspondence assignment $(L,M)$ for $G$ such that $G$ is $H$-critical with respect to $(L,M)$, then $d_{3,\varepsilon}(G|H) \geq 0$.
\end{lemma}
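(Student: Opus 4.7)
The plan is to prove Lemma~\ref{Hcritical} by induction on $v(G)+e(G)$, taking $\varepsilon = \tfrac{1}{50}$ as in Theorem~\ref{theorem:stronglinear}. For the base case, note that if $V(G) = V(H)$ then we cannot also have $E(G) = E(H)$ (otherwise $G = H$ and no proper intermediate subgraph exists to witness $H$-criticality), so $e(G|H) \geq 1$, giving $d_{3,\varepsilon}(G|H) = e(G|H) > 0$ immediately.

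For the inductive step, I first reduce to the case where $G$ is $2$-connected. If $G$ has a cut vertex $v$, write $G = G_1 \cup G_2$ with $V(G_1) \cap V(G_2) = \{v\}$; since $H$ is connected, $H$ lies entirely in one side, say $G_1$. Using the restriction of $(L,M)$ and exploiting the $H$-criticality of $G$, one shows that $G_1$ is $H$-critical and $G_2$ is $\{v\}$-critical for suitable correspondence assignments (after possibly shrinking $L(v)$ to a single colour to encode the extension through $v$). The inductive hypothesis applies to both pieces, and $d_{3,\varepsilon}$ is additive across the cut, yielding $d_{3,\varepsilon}(G|H) = d_{3,\varepsilon}(G_1|H) + d_{3,\varepsilon}(G_2|\{v\}) \geq 0$. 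Bridges are handled analogously.

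Now assume $G$ is $2$-connected. Consider the plane embedding of $H$ inherited from $G$ and let $\mathcal{F}$ denote its set of faces. For each $F \in \mathcal{F}$ with boundary walk $W_F$, let $G_F$ be the subgraph of $G$ consisting of $W_F$ together with every vertex and edge of $G$ drawn in the open face $F$. By counting edges and non-boundary vertices face by face, $d_{3,\varepsilon}(G|H) = \sum_{F \in \mathcal{F}} d_{3,\varepsilon}(G_F \mid W_F)$. For each face $F$ containing at least one vertex outside $V(H)$, $H$-criticality of $G$ localizes to $W_F$-criticality of $G_F$: a colouring of $H$ that extends to $G \setminus x$ (for $x$ an edge or vertex in $F$) but not to $G$ restricts to a colouring of $W_F$ with the analogous property in $G_F$. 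When $W_F$ is a cycle and $v(G_F|W_F)\geq 2$, Theorem~\ref{theorem:stronglinear} gives $d_{3,\varepsilon}(G_F \mid W_F) \geq \tfrac{23}{10} > 0$; the degenerate cases $v(G_F|W_F) < 2$ are checked directly.

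The main obstacle is the case where $W_F$ fails to be a cycle, which happens precisely when $H$ has cut vertices lying on $\partial F$ or when $\partial F$ traverses an edge twice. The remedy is to cut along each such cut vertex of $H$, splitting $F$ into sub-regions whose boundary walks are genuine cycles, and apply Theorem~\ref{theorem:stronglinear} to each resulting piece; the additivity of $d_{3,\varepsilon}$ across these cuts recovers the global bound. Ensuring that each $G_F$ is itself $2$-connected with $W_F$ as its outer cycle may require a further block decomposition inside $F$, handled by the same inductive argument used to split at cut vertices of $G$. The overall scheme mirrors the list-colouring proof (Theorem~5.20 of \cite{postlelocalalgs}), with Theorem~\ref{theorem:stronglinear} serving as the key hyperbolicity input in the correspondence setting.
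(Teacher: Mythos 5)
Your overall strategy matches the paper's: dispose of $v(G|H)\leq 1$ directly from the definition of criticality (every vertex outside $H$ has degree at least $5$), and reduce the remaining case to Theorem~\ref{theorem:stronglinear} by converting $H$ into a facial cycle of a $2$-connected plane graph. The paper realizes the reduction by a single global operation\textemdash deleting edges of $H$ until it is a tree and splitting each vertex of $H$ into copies along the boundary walk, after which $H$ is a facial cycle and the remaining details are deferred to Lemma~2.5 of \cite{luke5LC}\textemdash whereas you decompose face by face over the faces of $H$ and sum $d_{3,\varepsilon}$; these are interchangeable, and your localization of criticality to the subgraph inside a genuine cycle $W_F$ is the standard argument and is sound.

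The one step that does not hold as stated is the cut-vertex reduction: $H$-criticality of $G$ does \emph{not} imply that $G_1$ (the side containing $H$) is $H$-critical. Given a proper subgraph $G_1'\subsetneq G_1$ containing $H$, criticality of $G$ applied to $G_1'\cup G_2$ yields a colouring $\phi$ of $H$ that extends to some $\psi$ on $G_1'\cup G_2$ but not to $G$. This $\phi$ extends to $G_1'$, but it may very well also extend to $G_1$: the obstruction to extending to $G$ could be entirely that every extension of $\phi$ to all of $G_1$ forces a colour on the cut vertex $v$ that does not continue into $G_2$, while the extension to $G_1'$ leaves a usable colour at $v$. (Your claim for $G_2$ is fine: taking $c=\psi(v)$, the colour $c$ extends to $G_2'$, and if it extended to $G_2$ then gluing with $\psi|_{G_1}$ along $v$ would extend $\phi$ to $G$, a contradiction; the asymmetry is that for $G_2$ you may choose the colour at $v$ witnessed by $\psi$, whereas for $G_1$ you would need \emph{every} extension of $\phi$ to $G_1$ to fail.) So the arithmetic identity $d_{3,\varepsilon}(G|H)=d_{3,\varepsilon}(G_1|H)+d_{3,\varepsilon}(G_2|\{v\})$ is correct, but the inductive hypothesis cannot be invoked on $G_1$ without a further argument\textemdash e.g., passing to criticality of $G_1$ relative to $H$ with a suitably restricted list at $v$, which is essentially what the reference deferred to by the paper supplies.
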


\begin{proof}
In the case where $v(G|H) = 0$, this follows from the definition of $H$-critical (since if $G$ is $H$-critical with $v(G|H) = 0$ then it follows that $e(G|H) \geq 1$). In the case where $v(G|H) =1$, again this follows from the definition of $H$-critical (as the vertex in $V(G) \setminus V(H)$ has at least five neighbours in $H$). The $v(G|H)\geq 2$ case follows from Theorem \ref{theorem:stronglinear}. Note that if $G$ is $H$-critical,  we may assume without loss of generality that $G$ is 2-connected: it suffices to delete edges from $H$ until it is a tree; to split each $v \in V(H)$ into $\deg(v)$ copies of itself, where each copy $v'$ is adjacent to copies $u'$, $w'$ of the two vertices $u,w$ that precede and follow it in the boundary walk of the tree obtained from $H$ (with multiplicity), as well as each vertex of $V(G)$ that lies between $u$ and $w$ in the portion of the cyclic ordering of the neighbours of $v$ (in the tree obtained from $H$) containing no other vertices of $H$. After performing this operation, $H$ becomes a facial cycle, and the result follows just as in Lemma 2.5 in \cite{luke5LC}.
\end{proof}

The proof of the following proposition is nearly identical to that of Proposition 5.21 in \cite{postlelocalalgs}; we include the proof for the purposes of cohesion.
\begin{prop}\label{notdelthencrit}
Let $G$ be a graph and $H$ a proper subgraph of $G$ such that $V(H) \neq V(G)$. If $G-V(H)$ is not $r$-correspondence-deletable in $G$, then there exists a subgraph $G_0$ of $G$ containing $H$ and an $r$-correspondence assignment for $G$ such that $G_0$ is $H$-critical.
\end{prop}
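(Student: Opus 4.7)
The plan is to use the bad correspondence assignment on $G - V(H)$ (guaranteed by non-deletability) to build an $r$-correspondence assignment $(L,M)$ on all of $G$ together with a specific precolouring $\varphi_H$ of $H$ that fails to extend to $G$. Then we take $G_0$ to be a $\subseteq$-minimal subgraph of $G$ containing $H$ for which some colouring of $H$ fails to extend; this $G_0$ will automatically be $H$-critical.

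\textbf{Construction.} By hypothesis, there exists a correspondence assignment $(L',M')$ on $G - V(H)$ satisfying $|L'(v)| \geq r - |N_G(v) \cap V(H)|$ for every $v \in V(G) \setminus V(H)$ and admitting no $(L',M')$-colouring. Define $(L,M)$ on $G$ as follows. For each $v \in V(H)$, let $L(v)$ be an arbitrary set of size $r$ and choose $c_v \in L(v)$; set $\varphi_H(v) := c_v$. For each $v \in V(G) \setminus V(H)$, introduce a set $A(v) = \{a_u^v : u \in N_G(v) \cap V(H)\}$ of fresh colours disjoint from $L'(v)$, and put $L(v) := L'(v) \cup A(v)$, so $|L(v)| \geq r$. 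For the matchings: for $e \in E(H)$ set $M_e := \varnothing$; for $e \in E(G - V(H))$ set $M_e := M'_e$; and for an edge $e = uv$ with $u \in V(H)$ and $v \notin V(H)$ let $M_e$ match $(u, c_u)$ to $(v, a_u^v)$ and leave all other pairs unmatched.

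\textbf{Verification.} Since $M_e = \varnothing$ on edges of $H$, the map $\varphi_H$ is a valid $(L,M)$-colouring of $H$. Any extension $\varphi$ of $\varphi_H$ to $G$ must, at each $v \notin V(H)$, avoid every colour $a_u^v$ with $u \in N_G(v) \cap V(H)$, since $(u, c_u) = (u, \varphi_H(u))$ is matched to $(v, a_u^v)$. Thus $\varphi(v) \in L(v) \setminus A(v) = L'(v)$. Conflicts between two vertices of $V(G) \setminus V(H)$ are governed precisely by $M'$. Hence an extension of $\varphi_H$ to $G$ would restrict to an $(L',M')$-colouring of $G - V(H)$, which does not exist. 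So $\varphi_H$ does not extend to an $(L,M)$-colouring of $G$.

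\textbf{Extracting $G_0$.} Let $G_0$ be a subgraph of $G$ with $H \subseteq G_0$ that is minimal (with respect to inclusion) subject to admitting some $(L,M)$-colouring of $H$ that does not extend to $G_0$; such a $G_0$ exists since $G$ itself has this property, via $\varphi_H$. If $G' \subsetneq G_0$ is any proper subgraph containing $H$, then by minimality every $(L,M)$-colouring of $H$ extends to $G'$. In particular, any colouring of $H$ witnessing the defining property for $G_0$ extends to $G'$ but not to $G_0$, so $G_0$ is $H$-critical, as required. I do not anticipate any real obstacle here: the only mildly subtle point is arranging $L(v)$ and the cross-matchings so that the precolouring $\varphi_H$ of $H$ exactly \emph{forbids} the colours in $A(v)$ and thereby reduces the extension problem to the original bad $(L',M')$-colouring problem on $G - V(H)$.
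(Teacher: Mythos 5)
Your proposal is correct and follows essentially the same route as the paper's proof (which is itself adapted from Proposition 5.21 of the list-colouring setting): lift the bad assignment $(L',M')$ on $G-V(H)$ to an $r$-correspondence assignment on $G$ by padding lists with fresh colours matched only to the precoloured colours of neighbours in $H$, observe the resulting precolouring of $H$ cannot extend, and take a minimal non-extendable subgraph containing $H$. The only cosmetic difference is that the paper reuses a single new colour $c_v$ per vertex $v\in V(H)$ across all its outside neighbours rather than a colour $a^v_u$ per edge, which changes nothing since conflicts in correspondence colouring are determined solely by the matchings.
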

\begin{proof}[Proof (Adapted from Proposition 5.21, \cite{postlelocalalgs})]
Since $G-V(H)$ is not $r$-correspondence-deletable in $G$, there exists a correspondence assignment $(L_0, M_0)$ such that $|L_0(v)| \geq r-\deg_H(v)$ for each $v \in V(G) \setminus V(H)$ and $G-V(H)$ is not $(L_0, M_0)$-colourable. Define a new correspondence assignment $(L,M)$ of $G$ as follows. For each $v \in V(H)$, define $c_v$ to be a new colour not appearing in any other list. Define $R$ to be a set of $r-1$ distinct colours not appearing in any other list or in $\cup_{v \in V(H)} \{c_v\}$. Set $L(v) = \{c_v\} \cup R$ for each $v \in V(H)$. For each $u \in V(G) \setminus V(H)$, let $L(u) = L_0(u) \cup \{c_v: v \in N(u) \cap V(H)\}$. For each $uv \not \in E(H)$, set $M_{uv} = (M_0)_{uv}$. For each $uv \in E(H)$, set $M_{uv} = \emptyset$. Finally, for each $uv$ with $u \in V(G)\setminus V(H)$ and $v \in V(H)$, set $M_{uv} = \{(u,c_v)(v,c_v)\}$. Now $(L,M)$ is an $r$-correspondence assignment of $G$. Let $\phi$ be the colouring of $H$ given by $\phi(v) = c_v$ for every $v \in V(H)$. Since $G-V(H)$ is not $(L_0,M_0)$-colourable, it follows that $\phi$ does not extend to an $(L,M)$-colouring of $G$. Let $G'$ be an inclusion-wise minimal subgraph of $G$ containing $H$ such that $\phi$ does not extend to an $(L,M)$-colouring of $G'$. By the minimality of $G'$, we have that $\phi$ extends to an $(L,M)$-colouring of every proper subgraph of $G'$ containing $H$. Thus $G'$ is $H$-critical with respect to $(L,M)$, as desired.
\end{proof}

We are now equipped to prove Theorem \ref{d_gepsilonbound}.

\begin{proof}[Proof of Theorem \ref{d_gepsilonbound}]
We proceed by induction on $v(G|H)+e(G|H)$. If $V(H) = V(G)$, then $d_{3,\varepsilon}(G|H) \geq 0$ as desired. So we may assume that $V(H)\neq V(G)$. By assumption, $G-V(H)$ is not 5-correspondence-deletable in $G$. By Proposition \ref{notdelthencrit}, it follows that there exists a subgraph $G_0$ of $G$ containing $H$ and 5-correspondence assignment for $G_0$ such that $G_0$ is $H$-critical. Note that $H$ is a proper subgraph of $G_0$ by definition of $H$-critical. By Lemma \ref{Hcritical}, we have that $d_{3,\varepsilon}(G_0|H) \geq 0$. By definition of critical, since $H$ is connected it follows that $G_0$ is connected. Note that $v(G|G_0)+ e(G|G_0) < v(G|H) + e(G|H)$. Hence by induction, $d_{3,\varepsilon}(G|G_0) \geq 0$.  By definition of $d_{3, \varepsilon}$, we have that $d_{3,\varepsilon}(G|H) = d_{3,\varepsilon}(G|G_0) + d_{3,\varepsilon}(G_0|H) \geq 0 + 0 = 0$, as desired.
\end{proof}


\section{Counting 5-Correspondence Colourings}\label{sec:girth3}
In this section, we prove Theorem \ref{expmanyextensions} which we will show afterwards implies Theorem \ref{expmanycc}. Theorem \ref{expmanyextensions} involves counting the number of colouring extensions of a precoloured subgraph $S$ of a graph $G$ to a colouring of $G$ itself. The precise bound on the number of extensions is given in part in terms of the deficiency of $G$ with respect to $S$. The reader may find it helpful to consult Figure \ref{fig:expmany} while reading for a depiction of the cases considered in the proof.

The final tool we will need before proving Theorem \ref{expmanyextensions} is the following theorem, due to Thomassen. This theorem was originally written in the language of list colouring; however, as pointed out by Dvo{\v{r}}{\'a}k and the first author in \cite{dvovrak2018correspondence}, the proof also carries over to the realm of correspondence colouring.
\begin{thm}[Thomassen, \cite{thomassen5LC}]\label{thomtech5cc}
Let $G$ be a plane graph. Let $C$ be the subgraph of $G$ whose edge- and vertex-set are precisely those of the outer face boundary walk of $G$. Let $(L,M)$ be a correspondence assignment for $G$ where $|L(v)| \geq 1$ for a path $S \subseteq C$ with $v(S)\leq 2$; where $|L(v)| \geq 3$ for all $v \in V(C) \setminus V(S)$; and where $|L(v)|\geq 5$ for all $v \in V(G) \setminus V(C)$. Then every $(L,M)$-colouring of $S$ extends to an $(L,M)$-colouring of $G$.
\end{thm}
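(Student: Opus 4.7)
The plan is to prove the theorem by induction on $v(G)$, following Thomassen's classical strategy for $5$-choosability adapted to the correspondence setting. First, by greedily extending the precolouring to a $2$-vertex precoloured path on $C$ (possible since the relevant vertices have lists of size $\geq 3$ and only one colour can be matched to any fixed colour through a partial matching), one reduces to the case $v(S)=2$, so that $S=\{x,y\}$ is a precoloured edge of $C$ with some $(L,M)$-colouring $\phi$. Second, by triangulating every bounded face of length $\geq 4$ with new edges $e$ assigned the empty matching $M_e=\emptyset$ (which adds no constraints, so any colouring of the new graph is an $(L,M)$-colouring of the old), and by handling cut vertices via a standard block-by-block argument, one may further assume that $G$ is a $2$-connected near-triangulation with outer cycle $C$.

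Now induct on $v(G)$; the case $v(G)=2$ is trivial. If $C$ has a chord $uv$, split $G$ along $uv$ into two near-triangulations $G_1,G_2$ sharing the edge $uv$ with $S\subseteq G_1$; apply induction to $G_1$ to extend $\phi$ to a colouring $\phi_1$ of $G_1$, then apply induction again to $G_2$ using the precoloured edge $\phi_1|_{\{u,v\}}$, and glue the results.

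If $C$ is chordless, let $v$ be the neighbour of $x$ on $C$ distinct from $y$, and list the neighbours of $v$ in cyclic order as $x,u_1,\dots,u_k,w$, where $w$ is the other neighbour of $v$ on $C$ and $u_1,\dots,u_k$ are interior vertices (using chordlessness). Since $|L(v)|\geq 3$ and at most one colour of $L(v)$ is matched to $(x,\phi(x))$ through $M_{xv}$, pick two distinct colours $\alpha,\beta\in L(v)$ such that neither $(v,\alpha)$ nor $(v,\beta)$ is matched to $(x,\phi(x))$. Form $G'=G-v$ and modify the correspondence assignment on $G'$ by deleting from each $L(u_i)$ the (at most two) colours matched to $\alpha$ or $\beta$ through $M_{vu_i}$, leaving $|L'(u_i)|\geq 5-2=3$; all other lists and matchings are unchanged. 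Apply induction to $G'$ (now a near-triangulation whose outer face uses the path $u_1\cdots u_k$ in place of $v$) with precoloured edge $S$ to obtain a colouring $\phi'$ of $G'$. Then colour $v$ with $\alpha$ or $\beta$: the constraint with $x$ holds by choice of $\alpha,\beta$; the constraints with the $u_i$ hold because $\phi'(u_i)$ lies outside the deleted set and hence is not matched to $\alpha$ or $\beta$; and the constraint with $w$ rules out at most one of $\alpha,\beta$. So at least one choice yields a valid $(L,M)$-colouring of $G$.

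The main obstacle is bookkeeping rather than ideas: one must verify that the initial reductions (to $v(S)=2$, to a $2$-connected near-triangulation, and to an outer cycle $C$) preserve the hypotheses on list sizes and on the correspondence structure, and that in the chord case the induced correspondence assignment on each $G_i$ again satisfies the theorem's premises. Once these reductions are in place, the inductive step is the direct correspondence-colouring analogue of Thomassen's original argument, with the only substantive change being that "removing colours $\alpha,\beta$ from $L(u_i)$" is replaced by "removing the colours of $L(u_i)$ matched to $\alpha$ or $\beta$ via $M_{vu_i}$"\textemdash a modification that still deletes at most two colours.
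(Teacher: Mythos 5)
Your proposal is correct, and it is essentially the argument the paper relies on: the paper does not prove Theorem \ref{thomtech5cc} but cites Thomassen's list-colouring proof together with the observation of Dvo\v{r}\'ak and Postle that it transfers verbatim to correspondence assignments, which is exactly the reduction-plus-induction you reconstruct (reserve two colours $\alpha,\beta$ at the boundary vertex $v$ compatible with $\phi(x)$, delete from each interior neighbour $u_i$ the at most two colours matched to $(v,\alpha)$ or $(v,\beta)$, recurse, and finish using that $M_{vw}$ forbids at most one of $\alpha,\beta$). The reductions you list (greedy extension to $v(S)=2$, triangulating internal faces with empty matchings, the block decomposition, and the chord split) all preserve the hypotheses as you claim, so there is no gap.
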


Here is our precolouring extension theorem (that implies Theorem \ref{expmanycc}). 
\begin{thm}\label{expmanyextensions}
Let $\varepsilon$ be as in Theorem \ref{d_gepsilonbound}. Let $G$ be a plane graph, let $S$ be a connected subgraph of $G$, and let $(L,M)$ be a 5-correspondence assignment for $G$. If $\phi$ is an $(L,M)$-colouring of $S$ that extends to an $(L,M)$-colouring of $G$,
then 
$$\log_2 E(\phi) \geq \frac{v(G|S) - (\varepsilon^{-1}+1)\defc_3(G|S)}{67},$$ 

where $E(\phi)$ denotes the number of extensions of $\phi$ to $G$.
\end{thm}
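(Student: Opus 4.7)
I would induct on $v(G|S) + e(G|S)$. The base case $V(G) = V(S)$ is immediate: the only extension is $\phi$ itself, so $E(\phi) = 1$, while $\defc_3(G|S) = e(G|S) \ge 0$ makes the right-hand side non-positive. The second crucial base case is $V(G) = V(S) \cup \{v\}$, which is what pins down the constant $67$. Writing $k = \deg_S(v)$, the extensions of $\phi$ correspond bijectively to the valid colours for $v$, of which there are at least $5-k$, so the task reduces to checking $\log_2(5-k) \ge \frac{1 - (\varepsilon^{-1}+1)(k-3)}{67} = \frac{154 - 51k}{67}$ for each $k \in \{0, 1, \ldots, 5\}$. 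The binding case $k = 0$ becomes $67 \log_2 5 \ge 154$, which holds with only a sliver of slack; this is precisely why $67$ appears in the statement.

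For the inductive step with $v(G|S) \ge 2$, I would first reduce to $S = G[V(S)]$ by applying induction with $H = G[V(S)]$ whenever $S \subsetneq G[V(S)]$ (exploiting $e(G|H) < e(G|S)$ for strict decrease of the inductive measure, and noting $\defc_3(G|S) \ge \defc_3(G|H)$). I then split on whether there exists a subgraph $H$ with $V(S) \subsetneq V(H) \subsetneq V(G)$ and $S \subseteq H \subseteq G$ such that every $(L,M)$-extension of $\phi$ to $H$ extends to an $(L,M)$-colouring of $G$. If such $H$ exists, induction applied to $(H, S)$ gives $E_{S\to H}(\phi) \ge 2^{(v(H|S) - (\varepsilon^{-1}+1)\defc_3(H|S))/67}$, and induction applied to $(G, H)$ for each extension $\psi$ of $\phi$ to $H$ (each of which extends to $G$ by the case assumption, so the hypothesis is met) gives $E_{H\to G}(\psi) \ge 2^{(v(G|H) - (\varepsilon^{-1}+1)\defc_3(G|H))/67}$. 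Summing $E(\phi) = \sum_\psi E_{H\to G}(\psi)$ and using the additivities $v(G|S) = v(H|S) + v(G|H)$ and $\defc_3(G|S) = \defc_3(H|S) + \defc_3(G|H)$, the exponents combine to give exactly the claimed bound.

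When no such $H$ exists, my goal is to prove $d_{3,\varepsilon}(G|S) \ge 0$; this suffices because then $(\varepsilon^{-1}+1)\defc_3(G|S) \ge (1+\varepsilon)v(G|S)$, making the right-hand side at most $-\varepsilon v(G|S)/67 \le 0$ so that $E(\phi) \ge 1$ (guaranteed by hypothesis) already proves the theorem. To get the deficiency bound I would apply Theorem~\ref{d_gepsilonbound} after ruling out the existence of any nonempty $5$-correspondence-deletable $X \subseteq V(G) \setminus V(S)$. For $X \subsetneq V(G) \setminus V(S)$ the subgraph $H = G - V(X)$ witnesses the previous case and yields a contradiction, since deletability forces every extension of $\phi$ to $H$ to extend to $G$. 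The main obstacle lies in the borderline case $X = V(G) \setminus V(S)$, where $G - V(S)$ as a whole is deletable but no strict subset is; here I plan to pick $v \in V(G) \setminus V(S)$ of maximum $\deg_S(v)$, apply the no-$H$ hypothesis with $H = S + v$ (forcing some $(L,M)$-extension of $\phi$ to $S+v$ to fail to extend to $G$), and combine this with Theorem~\ref{thomtech5cc} to either recover the deficiency bound directly or reduce to an earlier case, closing the proof.
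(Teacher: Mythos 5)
Your overall architecture coincides with the paper's: induction with the same two base cases (including the binding computation $67\log_2 5\ge 154$ for a vertex $v$ with no neighbour in $S$), a dichotomy on the existence of an intermediate $H$ through which the count factors multiplicatively, and, in the remaining case, an appeal to Theorem \ref{d_gepsilonbound} after excluding deletable subgraphs. The base cases and the factoring step are correct (your variant of the dichotomy, phrased via ``every extension of $\phi$ to $H$ extends to $G$'' rather than via deletability of $G-V(H)$, works equally well for combining the two inductive counts). The gap is in the endgame, which you yourself flag as ``the main obstacle'' but do not resolve, and where your stated target is not quite the right one.

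Concretely: you aim to prove $d_{3,\varepsilon}(G|S)\ge 0$, but Theorem \ref{d_gepsilonbound} cannot deliver this, because your no-$H$ hypothesis does not exclude the possibility that $G-V(S)$ in its entirety is $5$-correspondence-deletable. The paper's resolution is to apply Theorem \ref{d_gepsilonbound} to the pair $(G,H)$ with $H=S+v$, where $v$ is chosen with the maximum number of neighbours in $S$: now every nonempty deletable $X\subseteq V(G)\setminus V(H)$, including $X=V(G)\setminus V(H)$ itself, produces an intermediate subgraph $G-X$ strictly between $S$ and $G$ and is therefore excluded. This yields $d_{3,\varepsilon}(G|H)\ge 0$, not $d_{3,\varepsilon}(G|S)\ge 0$, and converting one into the other costs $\defc_3(H|S)=\deg_H(v)-3$ together with a loss of $1$ in the vertex count. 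Two further steps are then unavoidable: (i) the cases $\deg_H(v)\le 2$ must be eliminated by an explicit construction (adding a dummy colour at $v$, enlarging the lists of its neighbours, and invoking Theorem \ref{thomtech5cc}) showing that $G-V(H)$ is deletable, contradicting the dichotomy; and (ii) when $\deg_H(v)=3$, so that $\defc_3(H|S)=0$, one still has $v(G|S)=v(G|H)+1$, and the inequality only closes because $\defc_3(G|H)\ge\varepsilon\, v(G|H)>0$ and $\defc_3(G|H)$ is an integer, hence at least $1$, which absorbs the extra $+1$. Your sketch names $H=S+v$ and Theorem \ref{thomtech5cc} but establishes neither (i) nor (ii); in particular, the observation that some extension of $\phi$ to $S+v$ fails to extend to $G$ is not what is needed --- what is used is that at least one extension does extend, so that induction can be applied from $H$ to $G$.
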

\begin{proof}

We proceed by induction on $v(G|S)$. First suppose $v(G|S) = 0$. Then since $\defc_3(G|S)$ is positive, the right-hand side of the equation is negative. As $\phi$ extends to an $(L,M)$-colouring of $G$ by assumption, we have that $\log_2 E(\phi)\ge 0$ and the result follows. Next suppose that $v(G|S) = 1$, and let $v \in V(G)\setminus V(S)$. Then 
$$\frac{v(G|S)-(\varepsilon^{-1}+1)\defc_3(G|S)}{67} = \frac{1-(\varepsilon^{-1}+1)(\deg(v)-3)}{67}.$$

Note that when $\deg(v) \geq 4$, the right-hand side is negative since $\varepsilon^{-1} > 0$. Since $\phi$ extends to an $(L,M)$-colouring of $G$ by assumption, it follows that $\log_2 E(\phi) \geq 0$, and so $\log_2 E(\phi) \geq \frac{v(G|S) - (\varepsilon^{-1}+1)\defc_3(G|S)}{67}$ holds as desired. We may therefore assume that $\deg(v) \leq 3$. Since $|L(v)| \geq 5$ it follows that $E(\phi) \geq 5-\deg(v)$.  Therefore it suffices to show that $\log_2 (5-\deg(v)) \geq \frac{1-(\varepsilon^{-1}+1)(\deg(v)-3)}{67}$, or equivalently, that $67 \geq \frac{1-(\varepsilon^{-1}+1)(\deg(v)-3)}{\log_2 (5-\deg(v))}$. The right-hand side is maximized when $\deg(v) = 0$, in which case it is easy to verify that $67 > \frac{154}{\log_2(5)}$. 

We may therefore assume that $v(G|S) \geq 2$.

Before proceeding with the remainder of the case analysis, we will need the following claim.
\begin{claim}\label{claimrdeletable}
 There does not exist a graph $H \subsetneq G$ with $S \subsetneq H$ and $v(S) < v(H) < v(G)$ such that $G-V(H)$ is a 5-correspondence-deletable subgraph of $G$. 
\end{claim}
\begin{proof}
Suppose not. Let $G' := G - V(H)$. Note that $G'$ is an induced subgraph of $G$. Since $v(S) < v(H)< v(G)$, we have that $v(H|S) < v(G|S)$, and so it follows by induction that there are at least $2^\frac{v(H|S)-(\varepsilon^{-1}+1)\defc_3(H|S)}{67}$ extensions of $\phi$ to $H$. Since $G'$ is 5-correspondence-deletable, we have by the definition of 5-correspondence-deletable subgraph that each of these extensions of $\phi$ to an $(L,M)$-colouring of $H$ extends further to an $(L,M)$ colouring of $G'$, and thus to $G$. Since $v(S) < v(H)< v(G)$, it follows that $v(G|H) < v(G|S)$, and so by induction for each extension of $\phi$ to an $(L,M)$ colouring $\phi'$ of $H$ there are at least $2^\frac{v(G|H)-(\varepsilon^{-1}+1)\defc_3(G|H)}{67}$ extensions of $\phi'$ to $G$. Therefore

\begin{align*}
 \log_2E(\phi) &\geq \frac{v(H|S)-(\varepsilon^{-1}+1)\defc_3(H|S)}{67} + \frac{v(G|H)-(
 \varepsilon^{-1}+1)\defc(G|H)}{67} \\
&=\frac{v(H|S)+v(G|H)-(\varepsilon^{-1}+1)(\defc_3(H|S)+\defc_3(G|H))}{67} \\
&= \frac{v(G|S)-(\varepsilon^{-1}+1)\defc_3(G|S)}{67},
\end{align*}
as desired.
\end{proof}

Among all vertices in $V(G)\setminus V(S)$, choose a vertex $v$ that maximizes $|N(v) \cap V(S)|$. Let $H := S + v$ (see Figure \ref{fig:expmany}). Since $\phi$ extends to an $(L,M)$-colouring of $G$, there is at least $1 = 2^0$ extension $\phi'$ of $\phi$ to $H$ where $\phi'$ extends further to an $(L,M)$-colouring of $G$. Since $\phi'$ extends to $G$ and $v(G|H) < v(G|S)$, we have by induction that there exist at least $2^\frac{v(G|H)-(\varepsilon^{-1}+1)\defc_3(G|H)}{67}$ extensions of $\phi'$ to $G$. Therefore 
$$
\log_2 E(\phi) \geq 0 + \frac{v(G|H)-(\varepsilon^{-1}+1)\defc_3(G|H)}{67}.$$

Moreover, since $v(G|H) = v(G|S)-1$ and $\defc_3(G|H) = \defc_3(G|S)-\deg_H(v)+3$, it follows that
$$\log_2 E(\phi) \geq \frac{v(G|S) - 1-(\varepsilon^{-1}+1)(\defc_3(G|S)-\deg_H(v)+3)}{67}. 
$$

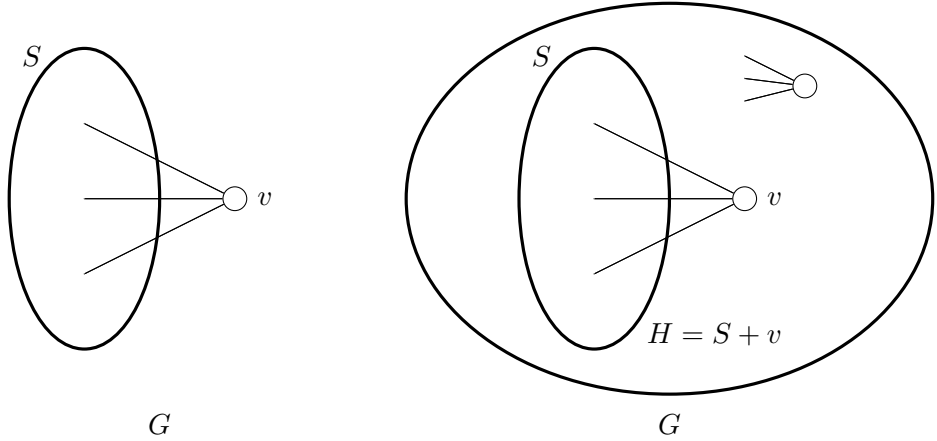
\begin{figure}[ht]
\tikzset{black/.style={shape=circle,draw=black,fill=black,inner sep=1pt, minimum size=9pt}}
\tikzset{white/.style={shape=circle,draw=black,fill=white,inner sep=1pt, minimum size=9pt}}

\tikzset{invisible/.style={shape=circle,draw=black,fill=black,inner sep=0pt, minimum size=0.1pt}}
\begin{center}
\begin{tikzpicture}
\filldraw[color=black!100, fill=black!0, very thick] (0,0) ellipse (1 and 2);

        \node[invisible] (1) at (0,-1) {};
        \node[invisible] (2) at (0,0) {};
        \node[invisible] (3) at (0,1) {};
        \node[white] (4) at (2,0) {};
        \node[]  at (2.4,0) {$v$};
        \node[] at (1,-3) {$G$};
        \node[] at (-0.7,1.9) {$S$};

        \draw[black] (4)--(1); 
        \draw[black] (4)--(2); 
        \draw[black] (4)--(3); 

\end{tikzpicture}
\hskip 15mm
\begin{tikzpicture}
\draw[color=black!100, very thick] (0,0) ellipse (1 and 2);
\draw[color=black!100, very thick] (1,0) ellipse (3.5 and 2.6);

        \node[invisible] (1) at (0,-1) {};
        \node[invisible] (2) at (0,0) {};
        \node[invisible] (3) at (0,1) {};
        \node[white] (4) at (2,0) {};
        \node[]  at (2.4,0) {$v$};
        \node[] at (1,-3) {$G$};
        \node[] at (1.6,-1.8) {$H = S + v$};
        \node[] at (-0.7,1.9) {$S$};
        
        \node[invisible] (5) at (2,1.9) {};
        \node[invisible] (6) at (2,1.6) {};
        \node[invisible] (7) at (2,1.3) {};
        \node[white] (8) at (2.8,1.5) {};

        \draw[black] (4)--(1); 
        \draw[black] (4)--(2); 
        \draw[black] (4)--(3); 
        \draw[black] (8)--(5); 
        \draw[black] (8)--(6); 
        \draw[black] (8)--(7); 
        
\end{tikzpicture}
\caption{The cases to consider for Theorem \ref{expmanyextensions}. First, the case where $V(G) = V(S) \cup \{v\}$; here, we consider each possible value of $\deg(v)$. For the case where $v(G) \geq v(S)+2$, we let $H = S+v$. Note then that $v(G)-v(S) > v(G)-v(H)$ and  $v(G)-v(S) > v(H)-v(S)$.}
    \label{fig:expmany}
\end{center}
\end{figure}

If $\deg_H(v) \geq 4$, the desired result immediately holds since $\varepsilon > 0$. Thus we may assume $\deg_H(v) \leq 3$. First suppose $\deg_H(v) \leq 2$.  We will show $G':= G-V(H)$ is 5-correspondence-deletable, contradicting Claim \ref{claimrdeletable}. To that end, let $(L',M')$ be a correspondence assignment for $G'$ with $|L'(u)| \geq 5-(\deg_G(u) - \deg_H(u))$ for all $u \in V(G')$. Note that by our choice of $v$, every vertex in $G'$ has $|L'(u)| \geq 2$. We claim $G'$ is $(L',M')$-colourable. To see this, let $T \subseteq V(G')$ be the set of vertices $\{u \in V(G'): |L'(u)| = 2\}$. Note that each vertex in $T$ is adjacent to $v$ in $G$. Let $G'':= G[V(G')\cup \{v\}]$. Let $(L'',M'')$ be a correspondence assignment for $G''$ obtained from $(L',M')$ as follows: define $L''$ by setting $L''(u) = L'(u)$ for all $u \in V(G')\setminus T$; setting $L''(v) = \{c\}$, where $c$ is a new colour not present in $\cup_{u \in V(G')}L(u)$; setting $L''(u) = L'(u) \cup \{c\}$ for all $u \in T$. Finally, define $M''$ by setting $M_{u_1u_2}'' = M'_{u_1u_2}$ for all $u_1u_2 \in E(G')$; and setting $M_{vu}'' = \{(v,c)(u,c)\}$ for all $u \in T$. Note that with the sole exception of $v$, every vertex $u$ in the outer face boundary walk of $G''$ has $|L''(u)| \geq 3$ by definition, and every vertex in $V(G'')\setminus (\{v\} \cup T)$ has $|L''(u)| \geq 5$. By Theorem \ref{thomtech5cc}, $G''$ has an $(L'',M'')$-colouring $\phi'$. Since no vertex $u \in T$ has $\phi'(u) = c$ by construction, it follows that $\phi'$ is an $(L',M')$-colouring of $G'$. This proves that $G' = G-V(H)$ is 5-correspondence-deletable; and since $v(S) < v(H) < v(G)$, this contradicts Claim \ref{claimrdeletable}.

We may therefore assume that $\deg_H(v) = 3$. 

By Claim \ref{claimrdeletable}, there does not exist $X \subseteq V(G)\setminus V(H)$ such that $G[X]$ is 5-correspondence-deletable in $G$. Thus by Theorem \ref{d_gepsilonbound}, we have that $d_{3,\varepsilon}(G|H) \geq 0$. Using this and Observation \ref{deficiencyobs}, it follows that 
\begin{equation}\label{vghbound}
    v(G|H) \leq \varepsilon^{-1}\defc_3(G|H).
\end{equation}
Moreover, since $v(G|S) = 1+v(G|H)$ and $\defc_3(G|S) = \defc_3(G|H) + \defc_3(H|S)$, it follows that
$$
   \frac{v(G|S)-(\varepsilon^{-1}+1)\defc_3(G|S)}{67} = \frac{1+v(G|H)-(\varepsilon^{-1}+1)(\defc_3(G|H)+\defc_3(H|S))}{67}.
$$ 

By definition of $\defc_3(G|S)$, we have that $\defc_3(H|S)= \deg_H(v)-3 = 0$ since $\deg_H(v)=3$. Using this and Observation \ref{deficiencyobs}, we obtain that 
 \begin{align*}
 \frac{v(G|S)-(\varepsilon^{-1}+1)\defc_3(G|S)}{67} & \leq \frac{1+\varepsilon^{-1}\defc_3(G|H)-(\varepsilon^{-1}+1)\defc_3(G|H)}{67}  \\
  &= \frac{1-\defc_3(G|H)}{67}. 
\end{align*}

By Theorem \ref{d_gepsilonbound}, we have that $\defc_3(G|H) \geq \varepsilon \cdot v(G|H)$, and since $v(G|S) \geq 2$, it follows that $v(G|H) \geq 1$ and so $\defc_3(G|H) \geq \varepsilon$. Since $\defc_3(G|H)$ is integral, it follows that $\defc_3(G|H) \geq 1$, and thus the right-hand side above is at most 0. Altogether then, we find that 

$$
\frac{v(G|S)-(\varepsilon^{-1}+1)\defc_3(G|S)}{67} \leq 0.
$$
But since $\phi$ extends to an $(L,M)$-colouring of $G$, we have that $\log_2(E(\phi)) \geq 0$. Thus
$$\log_2 E(\phi) \geq \frac{v(G|S)-(\varepsilon^{-1}+1)\defc_3(G|S)}{67},$$
as desired.
\end{proof}

We end this section by showing how Theorem \ref{expmanyextensions} implies Theorem \ref{expmanycc}. 

\begin{proof}[Proof of Theorem \ref{expmanycc}]
Let $S$ be the empty graph, and $\phi$ a trivial colouring of $S$. Let $E(\phi)$ denote the number of extensions of $\phi$ to $G$. 
Since $G$ is planar, we have that $e(G) \leq 3\cdot v(G)$. By Theorem \ref{expmanyextensions}, we find that
\begin{align*}
    \log_2{E(\phi)} &\geq \frac{v(G)-(50+1)(e(G)-3\cdot v(G))}{67} \\
    &\geq\frac{v(G)}{67},
\end{align*}
as desired.
\end{proof}

\section{Counting 3-Correspondence Colourings}\label{sec:girth5}

This section contains a proof of Theorem \ref{expmanyccgirthfive} which states that planar graphs of girth at least five have exponentially many 3-correspondence colourings. As in the girth three case, we prove Theorem \ref{expmanyccgirthfive} via a more technical theorem involving counting the number of colouring extensions of a precoloured subgraph. These proofs are found in Subsection \ref{subsec:proofsgirth5}. The tools needed to prove Theorem \ref{expmanyccgirthfive} are found in Subsection \ref{subsec:toolsgirth5} and are analogous to those needed to prove Theorem \ref{expmanycc}.

\subsection{Deficiency and Deletability Theorems for the Girth At Least Five Case}\label{subsec:toolsgirth5}

First, we need a deficiency version of a hyperbolicity theorem, given below. Note this is a slightly weaker version of a similar result from a previous paper of ours: namely, the result below corresponds to Observation 2 in \cite{lukeevehyperbolicity}, ignoring the $q(\cdot)$ term.

\begin{obs}\label{girth5:stronglinear}
Let $\varepsilon = \frac{1}{88}$; let $G$ be a planar graph of girth at least five; let $S$ be a connected subgraph of $G$; let $(L,M)$ be a 3-correspondence assignment for $G$. Suppose that $G$ is $S$-critical with respect to $(L,M)$, and that
\begin{itemize}
    \item $G$ is not composed of exactly $S$ and one edge not in $S$, and
    \item $G$ is not composed of exactly $S$ together with one vertex of degree $3$. 
\end{itemize}  
Then $d_{5, \varepsilon}(G|S) \geq 3$. 
\end{obs}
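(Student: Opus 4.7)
The plan is to deduce this observation as a direct weakening of its stronger counterpart in our previous paper, specifically Observation~2 of \cite{lukeevehyperbolicity}. As the authors explicitly note just before the statement, the observation here is obtained from that prior result by dropping the quasiboundary term $q(\cdot)$. My first step would therefore be to import the stronger inequality verbatim: under the same hypotheses on $G$, $S$, and $(L,M)$, and under the same two excluded degenerate configurations, that result gives $d_{5,\varepsilon}(G|S) \geq 3 + q(G|S)$ for the same constant $\varepsilon = \frac{1}{88}$.

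The second and only remaining step is to observe that $q(G|S) \geq 0$. In \cite{lukeevehyperbolicity}, the quasiboundary is defined as a nonnegative count of certain short-cycle configurations interacting with $S$, so this is immediate from its definition. Substituting yields $d_{5,\varepsilon}(G|S) \geq 3$ exactly as required. A small sanity check would be to confirm that the two excluded cases in the current statement match those in \cite{lukeevehyperbolicity}: the configuration $G = S + e$ gives $d_{5,\varepsilon}(G|S) = 3$, and $G = S + v$ with $\deg(v) = 3$ gives $d_{5,\varepsilon}(G|S) = 4 - \varepsilon$. These are the minimal $S$-critical extensions and are precisely where the inequality is tight (or nearly so), which is why they must be handled separately from the general discharging argument.

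The main obstacle is of course not the one-line deduction above but the underlying hyperbolicity theorem from \cite{lukeevehyperbolicity} itself, whose proof is a substantial discharging argument tailored to $3$-correspondence colouring of girth-five planar graphs. Here we treat that theorem as a black box, since the paper explicitly invites it; the value of stating Observation~\ref{girth5:stronglinear} in this cleaner, $q$-free form is that it is exactly the shape needed to feed into the inductive precolouring-extension argument of Subsection~\ref{subsec:proofsgirth5}, via the same two-step template (deficiency hyperbolicity $\Longrightarrow$ deletability bound $\Longrightarrow$ extension count) that Section~\ref{sec:girth3} uses in the $5$-correspondence setting.
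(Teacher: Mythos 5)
Your proposal matches the paper exactly: the paper offers no independent proof of this observation, but simply states that it is Observation~2 of \cite{lukeevehyperbolicity} with the (nonnegative) $q(\cdot)$ term dropped, which is precisely your two-step deduction. Your sanity check on the two excluded configurations is also consistent with how the paper later handles those cases in Lemma~\ref{Hcritical5}.
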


Below is a lemma analogous to Lemma \ref{Hcritical}.

\begin{lemma}\label{Hcritical5}
Let $\varepsilon = \frac{1}{88}$, let $G$ be a plane graph with girth at least five, and let $H$ be a connected subgraph of $G$. If there exists a 3-correspondence assignment $(L,M)$ for $G$ such that $G$ is $H$-critical with respect to $(L,M)$, then $d_{5, \varepsilon}(G|H) \geq 3$.
\end{lemma}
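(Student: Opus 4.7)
The plan is to mimic the structure of the proof of Lemma \ref{Hcritical}, splitting into cases based on the value of $v(G|H)$ and invoking Observation \ref{girth5:stronglinear} in the bulk case. The two exceptional cases listed in Observation \ref{girth5:stronglinear} are exactly the degenerate situations I will have to handle by hand (using the definition of $H$-critical together with arithmetic), while everything else reduces immediately to the observation.

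First I would dispose of the case $v(G|H)=0$. Since $G$ is $H$-critical, $H$ is a proper subgraph of $G$, so $e(G|H)\ge 1$. Then $d_{5,\varepsilon}(G|H) = 3\,e(G|H) - (5+\varepsilon)\cdot 0 \ge 3$, as desired. Next, I would handle $v(G|H)=1$: let $v$ be the unique vertex of $V(G)\setminus V(H)$. Since $|L(v)|\ge 3$ and each neighbor of $v$ forbids at most one color, $H$-criticality forces $\deg_G(v)\ge 3$, so $e(G|H)\ge 3$. A direct computation gives
\[
d_{5,\varepsilon}(G|H) \;=\; 3\deg_G(v) - (5+\varepsilon) \;\ge\; 9 - 5 - \tfrac{1}{88} \;=\; 4 - \tfrac{1}{88} \;>\; 3,
\]
which covers both the exceptional sub-case $\deg_G(v)=3$ (corresponding to the second bullet in Observation \ref{girth5:stronglinear}) and the case $\deg_G(v)\ge 4$.

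In the remaining case $v(G|H)\ge 2$, I would apply Observation \ref{girth5:stronglinear} directly. Neither of its two exceptional configurations can occur here: the first forces $v(G|H)=0$, and the second forces $v(G|H)=1$. Hence Observation \ref{girth5:stronglinear} gives $d_{5,\varepsilon}(G|H)\ge 3$, completing the proof.

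The only subtlety I anticipate is whether Observation \ref{girth5:stronglinear}, as used in \cite{lukeevehyperbolicity}, implicitly assumes $H$ is a facial cycle (as Theorem \ref{theorem:stronglinear} does). If so, I would perform the same tree-splitting reduction used in the proof of Lemma \ref{Hcritical}: delete edges of $H$ until it becomes a tree, then split each vertex of $H$ along its cyclic neighborhood so that $H$ is turned into a facial cycle. Here the main thing to verify is that the girth condition is preserved, which it should be because the splitting operation only subdivides the embedding along edges of a tree and does not create any new cycles. Aside from that bookkeeping, I do not expect any real obstacle; the proof is essentially a case check plus a direct invocation of Observation \ref{girth5:stronglinear}.
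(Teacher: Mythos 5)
Your proof is correct and follows essentially the same route as the paper: a case split on $v(G|H)$, with the degenerate cases handled by direct computation from $H$-criticality and the rest delegated to Observation \ref{girth5:stronglinear} (which, as stated, already applies to any connected subgraph $S$, so no facial-cycle reduction is needed). The only cosmetic difference is that you treat all of $v(G|H)=1$ by hand via the degree bound $\deg_G(v)\ge 3$, whereas the paper invokes the observation for $e(G|H)\neq 3$ and computes only the $e(G|H)=3$ subcase; both are valid.
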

\begin{proof}
For $v(G|H) \geq 2$ or $v(G|H) = 1$ and $e(G|H) \neq 3$, this is directly implied by Observation \ref{girth5:stronglinear}.  Suppose now that $v(G|H) = 0$. Since $G$ is $H$-critical, it follows that $H$ is a proper subgraph of $G$ and so that $e(G|H) \geq 1$. Thus $d_{3,\varepsilon} \geq 3-0 = 3$. If $v(G|H) = 1$ and $e(G|H) = 3$, then $d_{5, \varepsilon}(G|H) = 3\cdot 3-5\cdot 1- \varepsilon \cdot 1 = 4-\varepsilon$. This is at least 3, since $\varepsilon = \frac{1}{88}$.
\end{proof}

Using this, we now establish a girth at least five version of Theorem \ref{d_gepsilonbound}. The proof is nearly identical to that of Theorem \ref{d_gepsilonbound}, and thus we omit it.

\begin{lemma}\label{d_gepsilonboundg5}
Let $\varepsilon$ be as in Lemma \ref{Hcritical5}. If $G$ is a plane graph with girth at least five and $H$ is a connected subgraph of $G$ such that there does not exist $X \subseteq V(G) \setminus V(H)$ such that $G[X]$ is 3-correspondence-deletable in $G$, then $d_{5, \varepsilon}(G|H) \geq 3$.
\end{lemma}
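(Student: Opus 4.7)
Proof proposal: The plan is to mimic the proof of Theorem~\ref{d_gepsilonbound} essentially verbatim, proceeding by induction on $v(G|H) + e(G|H)$, with the systematic substitutions $d_{3,\varepsilon} \to d_{5,\varepsilon}$, Lemma~\ref{Hcritical} $\to$ Lemma~\ref{Hcritical5}, and the strengthened lower bound $0 \to 3$. For the base case $V(H) = V(G)$, one has $v(G|H) = 0$ and hence $d_{5,\varepsilon}(G|H) = 3\,e(G|H)$, which is at least $3$ whenever $E(H) \subsetneq E(G)$ (the degenerate possibility $H = G$ is implicitly excluded by the applications in which this lemma is invoked, and in any case would make the target bound vacuous).

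For the inductive step, I would assume $V(H) \neq V(G)$, so that $G - V(H)$ is a nonempty induced subgraph. By hypothesis, $G - V(H)$ is not $3$-correspondence-deletable in $G$, so Proposition~\ref{notdelthencrit} (which is stated for arbitrary $r$ and therefore applies with $r = 3$) furnishes a subgraph $G_0$ with $H \subsetneq G_0 \subseteq G$ together with a $3$-correspondence assignment for $G_0$ under which $G_0$ is $H$-critical; the strict inclusion $H \subsetneq G_0$ comes from the minimality step in Proposition~\ref{notdelthencrit}. Since $G_0$ inherits girth at least five from $G$, Lemma~\ref{Hcritical5} then gives $d_{5,\varepsilon}(G_0|H) \geq 3$.

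To close the induction, I would verify that the hypothesis of the lemma carries over to the pair $(G, G_0)$: $G_0$ is connected because $H$ is (by the same criticality argument used in Theorem~\ref{d_gepsilonbound}), and any $X \subseteq V(G) \setminus V(G_0)$ whose induced subgraph were $3$-correspondence-deletable in $G$ would also lie in $V(G) \setminus V(H)$, contradicting our hypothesis on $H$. Since $H \subsetneq G_0$ forces $v(G|G_0) + e(G|G_0) < v(G|H) + e(G|H)$, induction yields $d_{5,\varepsilon}(G|G_0) \geq 3$. The telescopic identity $d_{5,\varepsilon}(G|H) = d_{5,\varepsilon}(G|G_0) + d_{5,\varepsilon}(G_0|H)$, immediate from the definition of $d_{5,\varepsilon}$ as a linear combination of $e(\cdot|\cdot)$ and $v(\cdot|\cdot)$, then delivers $d_{5,\varepsilon}(G|H) \geq 6 \geq 3$, as required.

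I expect no genuinely hard step; every required ingredient has already been proved elsewhere in the paper or its predecessor~\cite{lukeevehyperbolicity}. The only mild points deserving care are that Proposition~\ref{notdelthencrit} remains applicable in the girth-five regime (its proof never invokes girth, only manipulates correspondence assignments via a fresh colour $c_v$ for each $v \in V(H)$), and that the exceptional graphs flagged in Observation~\ref{girth5:stronglinear} are already subsumed inside the proof of Lemma~\ref{Hcritical5}, so no further case analysis is needed here. I anticipate the written proof fitting in under half a page.
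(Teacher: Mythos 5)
Your proposal is correct and follows exactly the route the paper intends: the authors omit this proof precisely because it is the proof of Theorem~\ref{d_gepsilonbound} with $r=3$, Lemma~\ref{Hcritical5} in place of Lemma~\ref{Hcritical}, and the bound $3$ in place of $0$, which is what you have written (induction on $v(G|H)+e(G|H)$, Proposition~\ref{notdelthencrit}, criticality, and the telescoping identity for $d_{5,\varepsilon}$). The only cosmetic point is that when $G_0=G$ the inductive call degenerates and gives $d_{5,\varepsilon}(G|G_0)=0$ rather than $3$, so the telescoping yields $3$ rather than $6$ in that subcase --- but the stated conclusion $d_{5,\varepsilon}(G|H)\geq 3$ still follows.
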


Similar to the case of Observation \ref{deficiencyobs}, this trivially implies the following.
\begin{obs}\label{deficiencyobs5}
Let $G$ be a graph of girth at least five, and $H$ a subgraph of $G$. If $d_{5,\varepsilon}(G|H) \geq 0$, then $v(G|H) \leq \varepsilon^{-1}\cdot (\defc_5(G|H)-3)$.
\end{obs}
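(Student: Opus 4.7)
The plan is a one-line algebraic manipulation that exactly parallels the proof of Observation \ref{deficiencyobs}. Starting from the equivalent form of the definition,
$$d_{5,\varepsilon}(G|H) \;=\; \defc_5(G|H) - \varepsilon \cdot v(G|H),$$
I would isolate $v(G|H)$ on the left-hand side to obtain the identity
$$v(G|H) \;=\; \varepsilon^{-1}\bigl(\defc_5(G|H) - d_{5,\varepsilon}(G|H)\bigr),$$
which is valid because $\varepsilon > 0$. Any lower bound of the form $d_{5,\varepsilon}(G|H) \geq c$ therefore translates directly into the upper bound $v(G|H) \leq \varepsilon^{-1}(\defc_5(G|H) - c)$, and the observation's conclusion is exactly this inequality with $c = 3$.

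I do not anticipate any combinatorial obstacle: the entire argument is algebraic and uses nothing beyond the two definitions $\defc_5$ and $d_{5,\varepsilon}$ together with the positivity of $\varepsilon$. The only step that requires care is the choice of which lower bound on $d_{5,\varepsilon}(G|H)$ to substitute into the displayed identity, since a bare $d_{5,\varepsilon}(G|H) \geq 0$ input only produces the weaker bound $v(G|H) \leq \varepsilon^{-1}\defc_5(G|H)$. The value $c = 3$ needed to match the stated conclusion is exactly what Lemma \ref{d_gepsilonboundg5} supplies in the girth-five setting, and the sentence immediately preceding the observation, "Similar to the case of Observation \ref{deficiencyobs}, this trivially implies the following," flags Lemma \ref{d_gepsilonboundg5} as the source of the required lower bound. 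Feeding that $\geq 3$ bound from Lemma \ref{d_gepsilonboundg5} into the displayed identity then yields the observation in a single line, in direct analogy with how Observation \ref{deficiencyobs} packages Lemma \ref{Hcritical} into a vertex count in the 5-correspondence setting.
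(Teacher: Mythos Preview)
Your approach is correct and is precisely what the paper intends: there is no separate proof in the paper, only the sentence flagging Lemma \ref{d_gepsilonboundg5} as the source, and your one-line algebraic derivation is the trivial implication being claimed. You are also right that the stated hypothesis $d_{5,\varepsilon}(G|H)\geq 0$ is a slip for $d_{5,\varepsilon}(G|H)\geq 3$ --- the weaker hypothesis only yields $v(G|H)\leq \varepsilon^{-1}\defc_5(G|H)$, while the $\geq 3$ bound (supplied by Lemma \ref{d_gepsilonboundg5} and invoked exactly this way in the proof of Theorem \ref{expmanyextensions5}) is what produces the $-3$ in the conclusion.
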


Finally, we will need the following theorem, due to Thomassen. As in the girth three case, this theorem was originally written in the language of list colouring; however, as pointed out by Dvo{\v{r}}{\'a}k and the first author \cite{dvovrak2018correspondence}, the proof also carries over to the realm of correspondence colouring.
\begin{thm}[Thomassen, \cite{thomassen3LCnew}]\label{thomtech3cc}
Let $G$ be a plane graph of girth at least five. Let $C$ be the subgraph of $G$ whose edge- and vertex-set are precisely those of the outer face boundary walk of $G$. Let $(L,M)$ be a correspondence assignment for $G$ where $|L(v)| \geq 1$ for each vertex $v$ in a path or cycle $S \subseteq C$ with $v(S)\leq 6$; where $|L(v)| = 2$ for each vertex $v$ in an independent set $A$ of vertices in $V(C) \setminus V(S)$; where $|L(v)|\geq 3$ for all $v \in V(G) \setminus (A \cup V(S))$; and where there is no edge between vertices in $A$ and vertices in $S$. Then every $(L,M)$-colouring of $S$ extends to an $(L,M)$-colouring of $G$.
\end{thm}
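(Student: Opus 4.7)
The plan is to prove this by induction on $v(G)$, following essentially Thomassen's original argument for the $3$-choosability of planar graphs of girth at least five. The correspondence colouring version carries over almost verbatim because each place the list colouring proof argues ``the colour $c$ used at a neighbour $u$ of $x$ forbids $c$ from $L(x)$,'' in our setting it forbids the colour matched to $c$ under $M_{ux}$ from $L(x)$ --- a purely local modification of a single element of $L(x)$. The strengthening of the statement (allowing a precoloured $S$ with $v(S) \le 6$, the independent set $A$ of ``deficient'' vertices with list size exactly $2$ not adjacent to $S$, and interior vertices with list size $\ge 3$) is precisely the form of inductive hypothesis needed so that the reductions below produce smaller instances satisfying the same hypotheses.

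The first batch of reductions handles the easy structural cases. If $V(G) = V(S)$, the precolouring of $S$ is already an $(L,M)$-colouring of $G$. If $G$ is not $2$-connected or $C$ has a chord $uv$, we split $G$ along the cut (respectively along $uv$), apply induction to the piece containing $S$, and then use the resulting colouring of the cut vertices (resp.\ of $u,v$) as the precoloured subgraph on the other side for a second application of induction. We may therefore assume $G$ is $2$-connected and $C$ is induced, and that $V(G) \neq V(S)$.

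The main case is to choose a vertex $w \in V(C) \setminus V(S)$ on which to reduce. Typically $w$ is taken adjacent to an endpoint of $S$ on $C$, or (if $S$ is a cycle or empty) a suitably chosen vertex of $C$. Let $w_1, \dots, w_k$ be the interior neighbours of $w$ in cyclic order around $w$. Since $G$ has girth $\ge 5$, the vertices $w_1, \dots, w_k$ form an independent set and each lies at distance $\ge 2$ from $V(S) \cup A$ in $G - w$. The reduction is: pick a colour $c \in L(w)$ compatible with whatever is forced on $w$ via $M$ by the at most two $S$-neighbours of $w$ on $C$; delete $w$; for each $w_i$, replace $L(w_i)$ by $L(w_i)$ minus the colour matched to $c$ under $M_{ww_i}$; declare the $w_i$ whose residual list has size exactly $2$ to be members of the new set $A'$; and extend $S$ along $C$ to absorb $w$'s slot. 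The induction hypothesis then finishes the extension on $G - w$.

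The main obstacle, and the technical heart of Thomassen's proof, is verifying that every reduction above preserves the invariants: $v(S') \le 6$, $A'$ is independent, and no edge joins $A'$ to $V(S')$. The bound $v(S) \le 6$ is exactly what allows $S$ to absorb $w$ (and occasionally a neighbour of $w$) during the reduction without violating the hypothesis, and the girth $\ge 5$ condition is precisely what prevents the new $A'$ from containing an edge or being adjacent to $V(S')$. The delicate sub-case is when some $w_i$ already belongs to $A$, so its list would drop to size $1$: one must then either choose $c$ to preserve a second colour at $w_i$ (which is possible whenever $|L(w)| \ge 3$ and the forbidden matched colour is unique), or, failing that, merge $w_i$ into $S'$ using the unique remaining colour and re-verify $v(S') \le 6$. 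Organising this case analysis so that the $v(S') \le 6$ bound is never exceeded, while exhausting all configurations of $w$'s neighbourhood on $C$ and in the interior, is the combinatorial crux of the argument.
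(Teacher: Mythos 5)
Your proposal takes essentially the same route as the paper: the paper gives no proof of this statement at all, but simply quotes it as Thomassen's list-colouring theorem together with the observation of Dvo\v{r}\'ak and Postle that the inductive argument transfers verbatim to correspondence assignments (replacing ``forbid the colour $c$'' by ``forbid the colour matched to $c$ under $M_e$''), which is exactly the argument you outline. At the level of detail given your sketch is consistent with Thomassen's proof, though the invariant-preserving case analysis (including configurations where a $w_i$ \emph{is} adjacent to $V(S)\cup A$, which girth five alone does not exclude) is deferred in your write-up just as the paper defers it to the citation.
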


\subsection{Proof of Theorem \ref{expmanyccgirthfive}}\label{subsec:proofsgirth5}
We now prove the following theorem, which is the girth at least five analogue to Theorem \ref{expmanyextensions}. The reader may find it helpful to consult Figure \ref{fig:expmany5} while reading for a depiction of the cases considered in the proof.

\begin{thm}\label{expmanyextensions5}
Let $G$ be a plane graph of girth at least five, let $S$ be a connected subgraph of $G$, and let $(L,M)$ be a 3-correspondence assignment for $G$. If $\phi$ is an $(L,M)$-colouring of $S$ that extends to an $(L,M)$-colouring of $G$,
then 
$$\log_2 E(\phi) \ge \frac{v(G|S) - 89\cdot\defc_5(G|S)}{282},$$
where $E(\phi)$ denotes the number of extensions of $\phi$ to $G$.

\end{thm}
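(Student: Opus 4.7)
The plan is to mirror the inductive structure of the girth-three proof (Theorem \ref{expmanyextensions}), proceeding by induction on $v(G|S)$. The base case $v(G|S) = 0$ is immediate since $\defc_5(G|S) \geq 0$ makes the right-hand side non-positive while $\log_2 E(\phi) \geq 0$. For $v(G|S) = 1$, let $v$ be the unique vertex of $V(G)\setminus V(S)$; for $\deg(v) \geq 3$ the right-hand side is non-positive, and for $\deg(v) \in \{0,1,2\}$ the direct bound $E(\phi) \geq 3 - \deg(v)$ suffices. The tightest case is $\deg(v) = 0$, which demands $\log_2 3 \geq 446/282$; this holds (since $282\log_2 3 \approx 446.96 > 446$) and is precisely the inequality forcing the constant $282$.

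For $v(G|S) \geq 2$, I first establish the analogue of Claim \ref{claimrdeletable}: there is no subgraph $H$ with $S \subsetneq H \subsetneq G$, $v(S) < v(H) < v(G)$, and $G - V(H)$ 3-correspondence-deletable in $G$. This follows by applying induction first to $(H, S)$ and then to $(G, H)$, using deletability to propagate each $(L,M)$-colouring of $H$ extending $\phi$ to a colouring of $G$, and multiplying the bounds via the additivity $v(G|S) = v(H|S) + v(G|H)$ and $\defc_5(G|S) = \defc_5(H|S) + \defc_5(G|H)$. Given the Claim, pick $v \in V(G)\setminus V(S)$ maximizing $|N(v)\cap V(S)|$, set $H := S + v$, and let $\phi'$ be an extension of $\phi$ to $H$ that further extends to $G$ (which exists by hypothesis). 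Applying induction to $(G, H)$ gives
\[
\log_2 E(\phi) \;\geq\; \log_2 E(\phi') \;\geq\; \frac{v(G|H) - 89\defc_5(G|H)}{282}.
\]

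Now I split on $\deg_H(v)$. If $\deg_H(v) \geq 2$, then $\defc_5(H|S) = 3\deg_H(v) - 5 \geq 1$, and the difference between the inductive bound and the target reduces to $\frac{267\deg_H(v) - 446}{282} \geq \frac{88}{282} > 0$, closing the induction immediately. If $\deg_H(v) = 1$, the Claim together with Lemma \ref{d_gepsilonboundg5} forces $d_{5,\varepsilon}(G|H) \geq 3$; since $v(G|H) \geq 1$ and $\defc_5(G|H)$ is an integer, this gives $\defc_5(G|H) \geq 4$, and Observation \ref{deficiencyobs5} yields $v(G|S) - 89\defc_5(G|S) \leq -85 - \defc_5(G|H) \leq -89 < 0$, so $\log_2 E(\phi) \geq 0$ already suffices. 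If $\deg_H(v) = 0$, then by the choice of $v$ no vertex of $V(G)\setminus V(S)$ has a neighbour in $V(S)$, and I argue that $G - V(H)$ is 3-correspondence-deletable, contradicting the Claim. To do so, form $G'' := G[V(G')\cup\{v\}]$ (re-embedding so $v$ lies on the outer face of its component), set $L''(v) := \{c\}$ for a new colour $c$, and for each neighbour $u$ of $v$ in $G''$ set $L''(u) := L'(u)\cup\{c\}$ with matching $M''_{vu} := \{(v,c)(u,c)\}$; the choice of $v$ forces $|N_G(u)\cap V(H)|\le 1$, so $|L'(u)|\ge 2$ and hence $|L''(u)|\ge 3$ for every $u \in V(G')$. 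Thomassen's Theorem \ref{thomtech3cc}, applied with precoloured path $S' = \{v\}$ and $A = \emptyset$, then produces an $(L'',M'')$-colouring of $G''$ whose restriction to $V(G')$ is an $(L',M')$-colouring of $G'$.

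The main obstacle is the $\deg_H(v) = 0$ case: one must verify that $G''$ inherits girth at least five from $G$, that the re-embedding placing $v$ on the outer face of its component is legitimate, and that any components of $G''$ not containing $v$ can be independently coloured using 3-correspondence-colourability of planar graphs of girth at least five. The remaining bookkeeping is routine; the constants $\varepsilon = 1/88$, the deficiency offset $3$ from Lemma \ref{d_gepsilonboundg5}, and the coefficient $89 = \varepsilon^{-1} + 1$ conspire so that the $\deg_H(v) = 1$ case closes cleanly via the deficiency bound alone.
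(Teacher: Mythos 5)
Your proof is correct, and while it shares the paper's skeleton (induction on $v(G|S)$, the non-deletability claim, the vertex $v$ maximizing $|N(v)\cap V(S)|$, the split on $\deg_H(v)$), your handling of the critical case $\deg_H(v)=1$ is genuinely different and simpler. The paper there splits further into $v(G|S)=2$ versus $v(G|S)\ge 3$, studies the set $X$ of vertices outside $S$ with a neighbour in $S$, rules out $X$ being independent via Theorem \ref{thomtech3cc}, and only then applies Lemma \ref{d_gepsilonboundg5} to an auxiliary graph $H'=S+u+w$ built from an edge of $G[X]$, obtaining $\defc_5(G|S)\ge 2+\varepsilon\, v(G|H')$. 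You instead apply Lemma \ref{d_gepsilonboundg5} directly to $H=S+v$: its hypothesis does follow from Claim \ref{claimrdeletable5} (a deletable $G[X]$ with $X\subseteq V(G)\setminus V(H)$ would make $H''=G-X$ a forbidden subgraph, since $S\subsetneq H\subseteq H''\subsetneq G$), and $H$ is connected because $\deg_H(v)=1$. This yields only the weaker $\defc_5(G|S)\ge 1+\varepsilon\, v(G|H)$, but the slack in the coefficient $89=\varepsilon^{-1}+1$ absorbs the loss, and your computation $v(G|S)-89\defc_5(G|S)\le -85-\defc_5(G|H)\le -89<0$ checks out; this eliminates the paper's independence analysis and the $H'$ construction entirely, at no cost to the constants. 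Your $\deg_H(v)=0$, $\deg_H(v)\ge 2$, and base cases match the paper's (re-adding $v$ as a precoloured singleton rather than invoking Theorem \ref{thomtech3cc} with $A=T$ is an equivalent device). One presentational caveat, which the paper shares: the inductive appeal to the theorem for the pair $(G,H)$ presupposes $H$ connected, which fails when $\deg_H(v)=0$, so that inductive bound should be stated only after the $\deg_H(v)=0$ case has been discharged by contradiction.
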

\begin{proof}

We proceed by induction on $v(G|S)$. First suppose that $v(G|S) = 0$. Then since $\defc_5(G|S)$ is positive, the right-hand side of the equation is negative. Since $\phi$ extends to an $(L,M)$-colouring of $G$ by assumption, we have that $\log_2 E(\phi) \geq 0$ and the result follows. Next suppose that $v(G|S) = 1$, and let $v \in V(G)\setminus V(S)$. Then 
\[ \frac{v(G|S)-89\cdot \defc_5(G|S)}{282}    = \frac{1-89(3\cdot \deg(v)-5)}{282}.
\]

Note that when $\deg(v) \geq 2$, the right-hand side is negative. Since $\phi$ extends to an $(L,M)$-colouring of $G$ by assumption, it follows that $\log_2 E(\phi) \geq 0$, and so 

$$\log_2 E(\phi) \ge \frac{v(G|S) - 89\cdot \defc_5(G|S)}{282},$$ as desired. Thus we may assume that $\deg(v) \leq 1$. Since $|L(v)| \geq 3$, we have that $E(\phi) \geq 3-\deg(v)$. Therefore it suffices to show that $\log_2 (3-\deg(v)) \geq \frac{1-89(3\cdot\deg(v)-5)}{282}$; or, equivalently, that
$$
    282 \geq \frac{1-89(3\cdot \deg(v)-5)}{\log_2(3-\deg(v))}.
$$
When $\deg(v) = 0$, the right-hand side equals $\frac{446}{\log_2(3)} < 282$. When $\deg(v) = 1$, the right-hand side equals $179$, which again is less than $282$. Thus the inequality above holds.

We may therefore assume that $v(G|S) \geq 2$.

Before proceeding with the remainder of the case analysis, we will need the following claim.
\begin{claim}\label{claimrdeletable5}
There does not exist a graph $H \subsetneq G$ with $S \subsetneq H$ and $v(S) < v(H) < v(G)$ such that $G-V(H)$ is a 3-correspondence-deletable subgraph of $G$. 
\end{claim}

\begin{proof}
Suppose not. Let $G' := G - V(H)$ be a 3-correspondence-deletable subgraph. Note that $G'$ is induced. Since $S \subsetneq H$ and $H \subsetneq G$ and $v(S) < v(H) < v(G)$, it follows that $v(H|S) < v(G|S)$. By induction, there are at least $2^\frac{v(H|S)-89\cdot \defc_5(H|S)}{282}$ extensions of $\phi$ to $H$. Since $G'$ is a 3-correspondence-deletable subgraph of $G$, we have by definition that each of these extensions of $\phi$ to an $(L,M)$-colouring of $H$ extends further to an $(L,M)$ colouring of $G'$, and therefore to $G$.

Since $H \subsetneq G$ and $S \subsetneq H$ and $v(S) < v(H) < v(G)$, we have that $v(G|H) < v(G|S)$, and so by induction for each extension of $\phi$ to an $(L,M)$ colouring $\phi'$ of $H$ there are at least $2^\frac{v(G|H)-89\cdot \defc_5(G|H)}{282}$ extensions of $\phi'$ to $G$. Therefore

\begin{align*}
 \log_2E(\phi) &\geq \frac{v(H|S)-89\cdot \defc_5(H|S)}{282} + \frac{v(G|H)-89\cdot \defc_5(G|H)}{282} \\
&=\frac{v(H|S)+v(G|H)-89(\defc_5(H|S)+\defc_5(G|H))}{282} \\
&= \frac{v(G|S)-89\cdot \defc_5(G|S)}{282},
\end{align*}
as desired.
\end{proof}

Among all vertices in $V(G)\setminus V(S)$, choose a vertex $v$ that maximizes $|N(v) \cap V(S)|$. Let $H := S + v$, and let $G' := G- V(H)$. Since $\phi$ extends to an $(L,M)$-colouring of $G$, there is at least $1 = 2^0$ extension $\phi'$ of $\phi$ to $H$ where $\phi'$ extends further to an $(L,M)$-colouring of $G$. Since $\phi'$ extends to $G$ and $v(G|H) < v(G|S)$, by induction we find that there exist at least $2^\frac{v(G|H)-89\cdot \defc_5(G|H)}{282}$ extensions of $\phi'$ to $G$. Therefore 
$$
\log_2 E(\phi) \geq 0 + \frac{v(G|H)-89\cdot \defc_5(G|H)}{282}.
$$

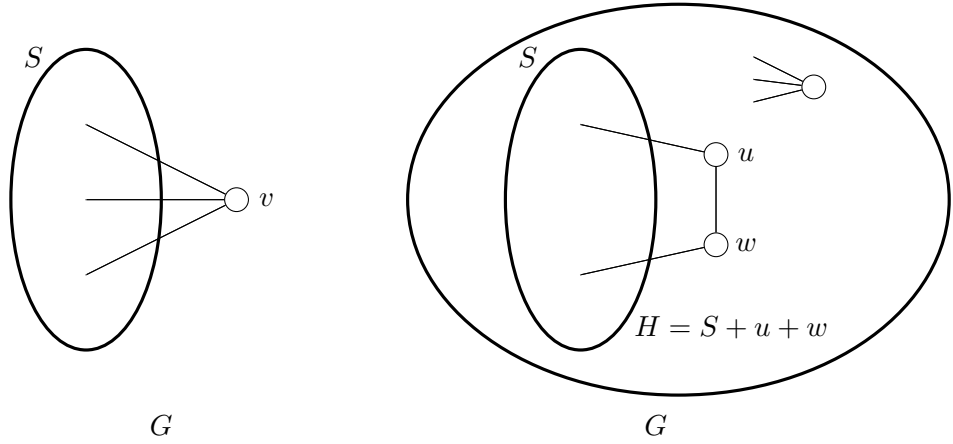
\begin{figure}[ht]
\tikzset{black/.style={shape=circle,draw=black,fill=black,inner sep=1pt, minimum size=9pt}}
\tikzset{white/.style={shape=circle,draw=black,fill=white,inner sep=1pt, minimum size=9pt}}

\tikzset{invisible/.style={shape=circle,draw=black,fill=black,inner sep=0pt, minimum size=0.1pt}}
\begin{center}
\begin{tikzpicture}
\filldraw[color=black!100, fill=black!0, very thick] (0,0) ellipse (1 and 2);

        \node[invisible] (1) at (0,-1) {};
        \node[invisible] (2) at (0,0) {};
        \node[invisible] (3) at (0,1) {};
        \node[white] (4) at (2,0) {};
        \node[]  at (2.4,0) {$v$};
        \node[] at (1,-3) {$G$};
        \node[] at (-0.7,1.9) {$S$};

        \draw[black] (4)--(1); 
        \draw[black] (4)--(2); 
        \draw[black] (4)--(3); 

\end{tikzpicture}
\hskip 15mm
\begin{tikzpicture}
\draw[color=black!100, very thick] (0,0) ellipse (1 and 2);
\draw[color=black!100, very thick] (1.3,0) ellipse (3.6 and 2.6);

        \node[invisible] (1) at (0,1) {};
        \node[white] (4) at (1.8,0.6) {};
        \node[white] (9) at (1.8,-0.60) {};
        \node[invisible] (12) at (0,-1) {};

        \node[]  at (2.2,0.6) {$u$};
        \node[]  at (2.2,-0.6) {$w$};

        \node[] at (1,-3) {$G$};
        \node[] at (2.0,-1.7) {$H = S + u+w$};
        \node[] at (-0.7,1.9) {$S$};
        
        \node[invisible] (5) at (2.3,1.9) {};
        \node[invisible] (6) at (2.3,1.6) {};
        \node[invisible] (7) at (2.3,1.3) {};
        \node[white] (8) at (3.1,1.5) {};

        \draw[black] (4)--(1); 
        \draw[black] (9)--(12);   
        \draw[black] (9)--(4);         
        \draw[black] (8)--(5); 
        \draw[black] (8)--(6); 
        \draw[black] (8)--(7); 
        
\end{tikzpicture}
\caption{Two of the cases to consider for Theorem \ref{expmanyextensions5}. First, the case where $V(G) = V(S) \cup \{v\}$; here, we consider each possible value of $\deg(v)$. The case where $v(G) = v(S)+ 2$ is easily dealt with, and is not pictured. For the case where $v(G) \geq v(S)+3$, we let $H = S+u+w$. Note then that $v(G)-v(S) > v(G)-v(H)$ and  $v(G)-v(S) > v(H)-v(S)$.}
    \label{fig:expmany5}
\end{center}
\end{figure}

Since $\defc_5(G|H) = \defc_5(G|S)-3\cdot \deg_H(v)+5$, it follows that
\begin{align*}
\log_2 E(\phi) &\geq \frac{v(G|S) - 1-89(\defc_5(G|S)-3\cdot \deg_H(v)+5)}{282} \\
&= \frac{v(G|S)-1-89\cdot \defc_5(G|S)}{282} - \frac{89(5-3\cdot \deg_H(v))}{282}.
\end{align*}
If $\deg_H(v) \geq 2$, then $ \log_2 E(\phi) \geq \frac{v(G|S)-1-89\cdot \defc_5(G|S)}{282}$, as desired. Thus we may assume $\deg_H(v) \leq 1$. Suppose $\deg_H(v) = 0$. We will show $G'$ is 3-correspondence-deletable, contradicting Claim \ref{claimrdeletable5}. To that end, let $(L',M')$ be a correspondence assignment for $G'$ with $|L'(u)| \geq 3-(\deg_G(u) - \deg_H(u))$ for all $u \in V(G')$. We claim $G'$ is $(L',M')$-colourable. To see this, first note that by our choice of $v$, every vertex in $G'$ has $|L'(u)| \geq 2$. Let $T \subseteq V(G')$ be the set of vertices $\{u \in V(G'): |L'(u)| = 2\}$. Since $\deg_H(v) = 0$, by our choice of $v$ each of the vertices $u \in T$ is adjacent to $v$ in $G$. Since $G$ has girth at least five, it follows that  $T$ is an independent set. By Theorem \ref{thomtech3cc}, $G'$ has an $(L',M')$-colouring $\phi'$. This proves that $G'$ is 3-correspondence-deletable; and since $v(S) < v(H) < v(G)$, this contradicts Claim \ref{claimrdeletable5}.

We may therefore assume that $\deg_H(v) = 1$. First suppose that $v(G|S) = 2$. Let $\{w\} = V(G) \setminus V(H)$. Note that in this case $G'=G- V(H)$ consists only of the vertex $u$. Since $v \in V(G) \setminus V(S)$ was chosen to maximize $|N(v) \cap V(S)|$, it follows that $|N(w) \cap V(S)| \leq 1$. Since $|L(u)| \geq 3$ and $\deg_G(w) \leq 2$, it follows that $G'$ is 3-correspondence-deletable, again contradicting Claim \ref{claimrdeletable5}.

Thus we assume that $v(G|S) \geq 3$. Let $X$ be the set of vertices in $V(G) \setminus V(S)$ with at least one neighbour in $S$. Note that $v \in X$.  By our choice of $v$, since $\deg_H(v) = 1$ it follows that every vertex in $X$ has exactly one neighbour in $S$. First suppose that $X$ is an independent set. We will show that $G'$ is a 3-correspondence-deletable subgraph of $G$, again contradicting Claim \ref{claimrdeletable5}. To that end, let $(L',M')$ be a correspondence assignment for $G'$ with $|L'(u)| \geq 3-(\deg_G(u) - \deg_H(u))$ for all $u \in V(G')$. Note that by our choice of $v$ and the fact that $X$ is an independent set, it follows that every vertex in $G'$ has $|L'(u)| \geq 2$. We claim $G'$ is $(L',M')$-colourable. To see this, let $G'' := G[V(G')\cup \{v\}]$. Note that in $G$, each vertex in $X$ and exactly one vertex in $S$. It follows that the vertices of $X$ are in the outer face boundary walk of $G''$. Let $(L'',M'')$ be a correspondence assignment for $G''$ obtained from $(L',M')$ as follows: define $L''$ by setting $L''(u) = L'(u)$ for all $u \in V(G')\setminus N_{G''}(v)$; setting $L''(v) = \{c\}$, where $c$ is a new colour not present in $\cup_{u \in V(G')}L(u)$; setting $L''(u) = L'(u) \cup \{c\}$ for all $u \in N_{G''}(v)$. Finally, define $M''$ by setting $M_{u_1u_2}'' = M'_{u_1u_2}$ for all $u_1u_2 \in E(G')$; and setting $M_{vu}'' = \{(v,c)(u,c)\}$ for all $u \in N_{G''}(v)$. Note that with the sole exception of $v$, every vertex $u$ in the outer face boundary walk of $G''$ has $|L''(u)| \geq 2$ by definition, and every vertex in $V(G'')\setminus (\{v\} \cup X)$ has $|L''(u)| \geq 3$. Moreover, by assumption, $X$ is an independent set. By Theorem \ref{thomtech3cc}, $G''$ has an $(L'',M'')$-colouring $\phi'$. Since no vertex $u \in T$ has $\phi'(u) = c$ by construction, it follows that $\phi'$ is an $(L',M')$-colouring of $G'$. This proves that $G' = G-V(H)$ is 3-correspondence-deletable; and since $v(S) < v(H) < v(G)$, this contradicts Claim \ref{claimrdeletable5}.

Thus $X$ is not an independent set, and so there exist vertices $u,w \in X$ such that $uw \in E(G)$. Let $H' := S + u + w$ (see Figure \ref{fig:expmany5}). Since every vertex in $X$ has exactly one neighbour in $S$, it follows that $e(H'|S) = 3$. Thus $\defc_5(G|H') = \defc_5(G|S)+1$; and by Claim \ref{claimrdeletable5}, there does not exist $X \subseteq V(G) \setminus V(H')$ such that $G[X]$ is 3-correspondence-deletable in $G$.  Thus by Lemma \ref{d_gepsilonboundg5}, we have that $d_{5,\varepsilon}(G|H') \geq 3$, and so by Observation \ref{deficiencyobs5}, $\defc_5(G|H') \geq \varepsilon \cdot v(G|H') + 3$. Thus $\defc_5(G|S) \geq 2 + \varepsilon \cdot v(G|H')$. It follows that

\begin{align*}
    v(G|S)-89\cdot \defc_5(G|S) & \leq v(G|S) -89(2 + \varepsilon \cdot  v(G|H)) \\
    &= v(G|S)-89(2 + \varepsilon \cdot (v(G|S)-2)) \\
    &= v(G|S)-89\varepsilon \cdot v(G|S)-89(2-2\varepsilon).
\end{align*}

As $\varepsilon = \frac{1}{88}$, the above is negative.  Thus 
$$
0 > \frac{v(G|S)-89\cdot \defc_5(G|S)}{282}.
$$

Since $\phi$ extends to an $(L,M)$-colouring of $G$, it follows that $\log_2 E(\phi) \geq 0$, and so $\log_2 E(\phi) > \frac{v(G|S)-89\cdot \defc_5(G|S)}{282}$, as desired. 
\end{proof}

As an easy corollary, we obtain Theorem \ref{expmanyccgirthfive}.

\begin{proof}[Proof of Theorem \ref{expmanyccgirthfive}]
Let $S$ be the empty graph, and $\phi$ a trivial colouring of $S$. Let $E(\phi)$ denote the number of extensions of $\phi$ to $G$. Since $G$ is planar and has girth at least five, $e(G)\leq \frac{5}{3} \cdot v(G)$. By Theorem \ref{expmanyextensions5},

\begin{align*}
    \log_2 E(\phi) & \geq \frac{v(G)-89(3\cdot e(G)-5\cdot v(G))}{282} \\
    &\geq \frac{v(G)}{282},
\end{align*}
as desired.
\end{proof}
\section{Locally Planar Graphs}\label{sec:locallyplanar}

The main theorems of this section are Theorems \ref{thm:expmanylocallyplanar} and \ref{thm:expmanylocallyplanarg5}, restated below, which show that for each $k \in \{3,5\}$, locally planar graphs of girth $(8-k)$ have exponentially many $k$-correspondence colourings.

\thmexpmanyloc*
\thmexpmanylocfive*

We prove these results by first proving technical theorems (Theorem \ref{thm:localplanarhypversion} for the 5-correspondence case and \ref{thm:localplanarhypversion-g5} for the 3-correspondence case) and combining these technical theorems with a theorem of Postle and Thomas \cite{postle2018hyperbolic}, stated below. Before stating these technical theorems, we require the following definition.

\begin{definition}
    Let $\varepsilon > 0$,  let $G$ be a graph, and let $H$ be an induced subgraph of $G$. We say $H$ is \emph{$\varepsilon$-exponentially-$r$-correspondence-deletable} if for every correspondence assignment $(L,M)$ of $H$ such that $|L(v)| \geq r-(\deg_G(v) - \deg_H(v))$ for each $v \in V(G)$, the graph $H$ has at least $2^{\varepsilon v(H)}$ distinct $(L,M)$-colourings no matter the correspondence assignment $(L,M)$. 
\end{definition}

Here are the technical theorems mentioned above.

\begin{thm}\label{thm:localplanarhypversion}
    Let $\mathcal{F}$ be a family of embedded graphs where for each $(G, \Sigma) \in \mathcal{F}$, the graph $G$ does not contain a subset $X \subseteq V(G)$ such that $G[X]$ is $\frac{1}{3484}$-exponentially-5-correspondence-deletable. Then $\mathcal{F}$ is hyperbolic, and $52$ is a Cheeger constant for $\mathcal{F}$.
\end{thm}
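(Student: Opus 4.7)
The plan is to prove hyperbolicity by contraposition. Assume $(G, \Sigma) \in \mathcal{F}$ and let $\eta$ be a closed curve in $\Sigma$ bounding an open disk $\Delta$ that meets $G$ only in vertices, with $\Delta$ containing at least one vertex of $G$. Write $H := G[V(G) \cap \Delta]$ for the interior subgraph and $t := |\{x \in S^1 : \eta(x) \in V(G)\}|$ for the boundary visit count. I will show that if $v(H) > 52(t-1)$, then $H$ is $\tfrac{1}{3484}$-exponentially-$5$-correspondence-deletable, contradicting the hypothesis on $\mathcal{F}$.

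First I planarize. Passing to the closed disk $\bar\Delta$, I split each boundary vertex $u \in V_\partial$ that $\eta$ visits $\mu(u)$ times into $\mu(u)$ copies (distributing the incident edges according to the cyclic order around $u$) and join consecutive copies along $\eta$ by new edges. The result is a plane graph $G^+$ whose outer face is a cycle $C^+$ of length $t$, with $v(G^+|C^+) = v(H)$. Since the outer face of $G^+$ has length $t$ and every other face has length at least $3$, Euler's formula gives $e(G^+) \leq 3(v(H)+t) - 3 - t$, whence
\[
\defc_3(G^+|C^+) = e(G^+) - t - 3v(H) \leq t - 3.
\]

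Next, given any correspondence assignment $(L, M)$ on $H$ with $|L(v)| \geq 5 - (\deg_G(v) - \deg_H(v))$, I build a $5$-correspondence assignment $(L^+, M^+)$ on $G^+$ together with a precolouring $\phi$ of $C^+$ whose extensions are in bijection with $(L,M)$-colourings of $H$. For each $u' \in V(C^+)$ fix a distinguished colour $c_{u'}$, set $\phi(u') = c_{u'}$, and pad $L^+(u')$ to size $5$ with fresh inert colours. For each interior $v$ set $L^+(v) = L(v) \cup \{y_{v,u'} : u' \in N_{G^+}(v) \cap V(C^+)\}$; the count $|N_{G^+}(v) \cap V(C^+)|$ equals $\deg_G(v) - \deg_H(v)$, so $|L^+(v)| \geq 5$. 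Define $M^+_{u'v} := \{(u',c_{u'})(v,y_{v,u'})\}$, keep $M^+_{vv'} := M_{vv'}$ on interior edges, and choose matchings on $E(C^+)$ so they do not conflict with $\phi$. Any extension $\phi^+$ of $\phi$ must avoid each $y_{v,u'}$ at interior $v$ (since the neighbour $u'$ is forced to $c_{u'}$), so $\phi^+(v) \in L(v)$, giving the bijection. Applying Theorem \ref{expmanyextensions} to $G^+$ with the connected subgraph $S = C^+$ and precolouring $\phi$, the number of extensions is at least
\[
2^{\frac{v(H) - 51\,\defc_3(G^+|C^+)}{67}} \geq 2^{\frac{v(H) - 51(t-3)}{67}}.
\]
Using $3484 = 67 \cdot 52$ and $v(H) \geq 52t - 51$, an elementary calculation yields
\[
\frac{v(H) - 51(t-3)}{67} \geq \frac{v(H)}{3484},
\]
so $H$ has at least $2^{v(H)/3484}$ $(L,M)$-colourings for every valid $(L,M)$. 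Hence $H$ is $\tfrac{1}{3484}$-exponentially-$5$-correspondence-deletable, contradicting the hypothesis on $\mathcal{F}$.

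The main obstacle is verifying that $\phi$ admits at least one extension to $G^+$, which is required to apply Theorem \ref{expmanyextensions}; equivalently, that $H$ is $(L,M)$-colourable. I expect to handle this by invoking Thomassen's Theorem \ref{thomtech5cc} on $G^+$ with a short precoloured subpath of $C^+$ and then adjusting the resulting colouring to align with the full cycle precolouring $\phi$. Ancillary care is also needed for the degenerate cases $t \leq 2$, for boundary vertices of multiplicity greater than one (where splitting may create parallel edges along $C^+$), and to ensure Euler's formula applies after the splitting operation.
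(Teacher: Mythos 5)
There is a genuine gap, and it sits exactly where you flagged it: the application of Theorem \ref{expmanyextensions} requires that the precolouring $\phi$ of $C^+$ extend to at least one $(L^+,M^+)$-colouring of $G^+$, i.e.\ that the entire disk interior $H$ be $(L,M)$-colourable for \emph{every} assignment with $|L(v)| \geq 5-(\deg_G(v)-\deg_H(v))$. This is false in general: an interior vertex with five or more neighbours on $\eta$ may legitimately receive an empty list, and more broadly the reduced lists on vertices adjacent to the boundary can have size $0$, $1$, or $2$ without those vertices lying on the outer face of $G^+$ or forming an independent set, so Theorem \ref{thomtech5cc} does not apply. Your fallback of precolouring only a short subpath of $C^+$ and then ``adjusting'' to match the full cycle precolouring cannot work either: extending a precolouring of an entire boundary cycle is precisely the problem that fails in general (the existence of $C$-critical graphs in Theorem \ref{theorem:mainhypthm} witnesses this). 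Consequently your central claim---that $v(H) > 52(t-1)$ forces the \emph{whole} interior to be $\frac{1}{3484}$-exponentially-$5$-correspondence-deletable---is simply not true; the hypothesis on $\mathcal{F}$ only forbids \emph{some} subset $X$ from being exponentially deletable, and identifying the right $X$ is the real content of the proof.

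The paper circumvents this by never applying Theorem \ref{expmanyextensions} to a subgraph whose colourability is in doubt. It fixes $\alpha = \frac{1}{52}$, takes a maximal connected $G' \supseteq G_D$ with $d_{3,\alpha}(G'|G_D) \geq 0$, and supposes for contradiction that $d_{3,\alpha}(G_\Delta|G') < 0$. The contrapositive of Theorem \ref{d_gepsilonbound} then produces a $5$-correspondence-\emph{deletable} subgraph $H = G_\Delta[X]$ disjoint from $G'$; deletability of $H$ is exactly what guarantees at least one colouring for every valid assignment, so Theorem \ref{expmanyextensions} (applied to $G_\Delta$ with $G'$ precoloured) legitimately yields exponentially many colourings of $H$, contradicting the hypothesis on $\mathcal{F}$. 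Maximality of $X$ together with Corollary \ref{cor:maxdel} and Theorem \ref{d_gepsilonbound} is also what pins down $G_\Delta - V(H) = G'$ and makes the deficiency bookkeeping close. Your Euler-formula computation and the arithmetic $3484 = 67\cdot 52$ are fine and mirror the paper's closing computation, but the argument needs the deletable-subgraph detour; as written, the proposal does not constitute a proof.
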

\begin{thm}\label{thm:localplanarhypversion-g5}
    Let $\mathcal{F}$ be a family of embedded graphs of girth at least five where for each $(G, \Sigma) \in \mathcal{F}$, the graph $G$ does not contain a subset $X \subseteq V(G)$ such that $G[X]$ is $\frac{1}{25380}$-exponentially-5-correspondence-deletable. Then $\mathcal{F}$ is hyperbolic, and $270$ is a Cheeger constant for $\mathcal{F}$.
\end{thm}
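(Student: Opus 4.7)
The plan is to follow the blueprint used to prove Theorem~\ref{thm:localplanarhypversion}, substituting the girth-five, $3$-correspondence analogues at every step: Theorem~\ref{expmanyextensions5} in place of Theorem~\ref{expmanyextensions}, the planar girth-$5$ edge bound $e(G)\le\tfrac{5}{3}(v(G)-2)$ in place of its girth-$3$ counterpart, and $\varepsilon=\tfrac{1}{88}$ from Lemma~\ref{Hcritical5} in place of $\tfrac{1}{50}$. I read the hypothesis of the statement as ``$3$-correspondence-deletable'' throughout, which appears to be the intent given the girth-five, $3$-correspondence setting. Fix $(G,\Sigma)\in\mathcal{F}$ and a closed curve $\eta:S^1\to\Sigma$ bounding an open disk $\Delta$ that meets $G$ only in vertices and contains at least one vertex of $G$; put $k:=|\{x\in S^1:\eta(x)\in V(G)\}|$, let $X$ be the set of vertices of $G$ in $\Delta$, and set $n:=|X|$. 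We must show $n\le 270(k-1)$.

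First, I would exploit the hypothesis on $\mathcal{F}$: since $G[X]$ is not $\tfrac{1}{25380}$-exponentially-$3$-correspondence-deletable, there exists a correspondence assignment $(L_0,M_0)$ on $G[X]$ with $|L_0(v)|\ge 3-(\deg_G(v)-\deg_{G[X]}(v))$ for all $v\in X$ and with strictly fewer than $2^{n/25380}$ distinct $(L_0,M_0)$-colourings. Following the Postle--Thomas duplication strategy, I would then build from the subgraph of $G$ lying in $\overline{\Delta}$ a plane graph $G_\Delta$ whose outer face is bounded by a cycle $C$ of length $k$: each visit of $\eta$ to a vertex becomes its own copy of that vertex, consecutive boundary copies are joined by a \emph{fictitious} edge (carrying the empty matching, hence imposing no colouring constraint) whenever the original boundary vertices are not already joined in $G$, and the routing of fictitious edges along $\partial\Delta$ preserves girth at least five in the sense needed by Theorem~\ref{expmanyextensions5}. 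Next, I would extend $(L_0,M_0)$ to a $3$-correspondence assignment $(L,M)$ on $G_\Delta$: assign each boundary vertex $u$ a singleton list $\{c_u\}$ with a fresh colour; for each boundary--interior edge $uv$ choose $M_{uv}$ so that $c_u$ is matched to a colour of $v$ designed to reduce $L(v)$ down to $L_0(v)$, padding $L(v)$ with dummy colours to size at least $3$ without introducing new colourings. By construction, the precolouring $\phi(u):=c_u$ is extendable to $G_\Delta$ and its extensions are in bijection with $(L_0,M_0)$-colourings of $G[X]$.

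The arithmetic is then immediate. Applying Theorem~\ref{expmanyextensions5} to $(G_\Delta,C,\phi)$ yields
$$
\log_2 E(\phi)\;\geq\;\frac{v(G_\Delta|C)-89\cdot\defc_5(G_\Delta|C)}{282}\;=\;\frac{n-89\cdot\defc_5(G_\Delta|C)}{282},
$$
while the bijection above gives $E(\phi)<2^{n/25380}$. Using $25380=282\cdot 90$ and rearranging gives $n<90\cdot\defc_5(G_\Delta|C)$. The planar girth-$5$ inequality $3e(G_\Delta)\le 5(n+k)-10$, together with $e(C)=k$, yields $\defc_5(G_\Delta|C)\le 2k-10$, and thus $n<180k-900\le 270(k-1)$ whenever the left side is positive; in the residual small-$k$ range the crude bound $\defc_5(G_\Delta|C)\le 3(k-1)$ (valid once $C$ is already a $k$-cycle in the girth-$\ge 5$ graph $G_\Delta$) suffices and again gives $n\le 270(k-1)$.

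The main obstacle is the middle paragraph: carrying out the Postle--Thomas-style vertex duplication and insertion of trivial-matching edges cleanly enough that $(G_\Delta,C)$ satisfies the hypotheses of Theorem~\ref{expmanyextensions5} (namely that $G_\Delta$ is plane of girth at least five and $C$ is a connected subgraph on which $\phi$ is well-defined and extendable) while simultaneously preserving a bijection between $(L_0,M_0)$-colourings of $G[X]$ and extensions of $\phi$. The book-keeping for multiply-visited boundary points and for the padding of interior lists requires the most care; once it is in place, Theorem~\ref{expmanyextensions5} plus the planar girth-$5$ edge bound finishes the argument exactly as in the girth-three case.
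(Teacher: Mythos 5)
The proposal takes a genuinely different route from the paper, and the route has a gap.

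The paper proves Theorem~\ref{thm:localplanarhypversion} (and, by the same argument with the modifications described at the end of Section~5, the girth-five version) by first constructing an intermediate connected subgraph $G'$ with $G_{D'} \subseteq G' \subseteq G_\Delta$ that is \emph{maximal} subject to $d_{5,\alpha}(G'|G_{D'}) \geq 0$, and then showing $d_{5,\alpha}(G_\Delta|G') \geq 0$ by contradiction. In that contradiction argument the candidate subgraph $H$ that one analyzes is not all of $G_\Delta - V(G_{D'})$ but a maximal \emph{$3$-correspondence-deletable} subgraph $H$ disjoint from $G'$; crucially, $H$ being deletable guarantees that every valid assignment on $H$ admits at least one colouring, which is exactly what is needed for Theorem~\ref{expmanyextensions5} to apply and deliver an exponential lower bound on extensions. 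You instead set $H$ to be the entire interior $G[X]$ and invoke the hypothesis of non-exponential-deletability directly. But non-exponential-deletability of $G[X]$ only provides a valid assignment $(L_0,M_0)$ with \emph{strictly fewer than} $2^{n/25380}$ colourings, and this number may be zero: $G[X]$ need not be $3$-correspondence-deletable at all. If $(L_0,M_0)$ admits no colouring, your precolouring $\phi$ does not extend, Theorem~\ref{expmanyextensions5} is vacuous, and your chain of inequalities never starts. Your write-up asserts ``$\phi(u):=c_u$ is extendable to $G_\Delta$'' without justification precisely at this point. The paper's use of Theorem~\ref{d_gepsilonboundg5}, Lemma~\ref{Hcritical5}, Proposition~\ref{notdelthencrit}, Corollary~\ref{cor:maxdel}, and the intermediate $G'$ is what handles the non-extendable case (there one gets a deficiency bound from criticality rather than from counting extensions), and that machinery has no counterpart in your argument.

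A second, smaller issue: you describe joining consecutive boundary vertices by ``fictitious edges'' and assert this ``preserves girth at least five in the sense needed.'' Adding a single edge between two non-adjacent vertices on the boundary can create a cycle of length $3$ or $4$ (for example, if three consecutive boundary vertices $u,v,w$ have $uw \in E(G)$ but $uv,vw\notin E(G)$, adding $uv$ and $vw$ creates a triangle). This is exactly why the paper, in its girth-five remarks, adds \emph{two new vertices} and a path $ux_1x_2v$ of length three between each non-adjacent consecutive pair rather than a single edge. This replacement enlarges the boundary set $D$ to $D'$ with $|D'|\leq 3|D|$, and the factor of three is what turns the raw constant $\alpha^{-1}=90$ (from $\varepsilon=\tfrac{1}{88}$ and $\alpha = \varepsilon/(2\varepsilon+1)=\tfrac{1}{90}$) into the claimed Cheeger constant $270$. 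Your Euler-formula accounting with $\defc_5(G_\Delta|C)\le 2k-10$ does not reflect these extra vertices, so even if the extendability gap were repaired, the final bookkeeping would need to be redone along the paper's lines.
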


Finally, we will invoke the following theorem due to Postle and Thomas.

\begin{thm}[Postle and Thomas, \cite{postle2018hyperbolic}]\label{thm:smalledgewidth}
For every hyperbolic family $\mathcal{F}$ that is closed under curve cutting there exists a constant $k>0$ such that if $(G,\Sigma) \in \mathcal{F}$ and $\Sigma$ has Euler genus $g$, then $G$ contains a non-contractible cycle of length at most $k\log(g+1)$.    
\end{thm}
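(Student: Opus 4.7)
The plan is to prove the contrapositive: assume for contradiction that $(G,\Sigma) \in \mathcal{F}$ has Euler genus $g$ and edge-width $\rho > k\log(g+1)$ for a constant $k = k(c)$ to be chosen large in terms of the Cheeger constant $c$ of $\mathcal{F}$, and derive a contradiction by forcing BFS balls to grow exponentially faster than the topology of $\Sigma$ can accommodate.

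First I would establish a disk-containment lemma: for any vertex $v \in V(G)$ and every integer $i$ with $0 \leq i < \rho/2$, the BFS ball $B_i(v) := \{u \in V(G) : d_G(u,v) \leq i\}$ lies inside a closed topological disk $\Delta_i(v) \subseteq \Sigma$ whose boundary $\eta_i := \partial \Delta_i(v)$ intersects $G$ only in vertices of the outer layer $L_i := B_i(v) \setminus B_{i-1}(v)$. This holds because every cycle of $G$ meeting $B_i(v)$ has length at most $2i+1 < \rho$, and is therefore contractible in $\Sigma$ by the edge-width hypothesis; one can then route $\eta_i$ through faces of $G$ incident to $L_i$ and enclosing a contractible regular neighborhood of $G[B_i(v)]$.

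Second, I would apply Definition \ref{def:hyp} to the curve $\eta_i$. Since $\eta_i$ bounds the open disk containing $B_i(v)$ and meets $G$ only at vertices of $L_i$ (with bounded multiplicity, as $\eta_i$ can be chosen to cross each vertex a bounded number of times), the hyperbolic inequality yields $|B_i(v)| \leq c \cdot (|L_i| - 1) + 1$. Combining with the identity $|L_i| = |B_i(v)| - |B_{i-1}(v)|$ and rearranging gives $|B_i(v)| \geq \alpha \cdot |B_{i-1}(v)|$ for a fixed constant $\alpha = \alpha(c) > 1$, and inductively $|B_{\lfloor \rho/2 \rfloor - 1}(v)| \geq \alpha^{\lfloor \rho/2 \rfloor - 1}$.

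The main obstacle is the third step: converting this local exponential growth into a global topological contradiction. The plan is to iterate Steps 1--2 using closure of $\mathcal{F}$ under curve cutting. Pick $v_1 \in V(G)$ and cut along $\eta_{\lfloor \rho/2 \rfloor - 1}(v_1)$: by closure, this produces two embedded graphs in $\mathcal{F}$, one on a disk containing at least $\alpha^{\lfloor \rho/2 \rfloor - 1}$ vertices, and one on the residual surface, which retains Euler genus at most $g$ but has strictly fewer vertices. The disk-containment lemma and hyperbolic inequality continue to apply on the residual piece, so one may choose $v_2$ in that residual and repeat. Bookkeeping shows that the Euler genus budget only permits $O(g+1)$ such essentially distinct iterations before the residual either fails to support another exponentially large BFS ball (forcing the existence of a non-contractible cycle of length less than $\rho$, contradicting the edge-width assumption) or is exhausted entirely. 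Choosing $k$ roughly equal to $2/\log_2 \alpha$ plus an additive buffer depending on $c$ makes the resulting chain of inequalities collapse whenever $\rho > k\log(g+1)$, giving the desired contradiction and the bound on $\rho$.
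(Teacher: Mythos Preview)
The present paper does not prove this theorem; it is quoted from \cite{postle2018hyperbolic} and invoked as a black box in deriving Theorem~\ref{thm:expmanylocallyplanar}. So there is nothing in this paper to compare your argument against, and I assess the sketch on its own merits.

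Steps~1 and~2 are essentially the standard exponential-growth argument for hyperbolic families and are basically sound. One misstatement in Step~1: it is not true that ``every cycle of $G$ meeting $B_i(v)$ has length at most $2i+1$'' (a long cycle through $v$ is an immediate counterexample). What you need, and what suffices, is that every \emph{fundamental cycle} of the BFS tree restricted to $B_i(v)$ has length at most $2i+1<\rho$ and is hence contractible; since these generate the fundamental group of a regular neighbourhood of $G[B_i(v)]$, that neighbourhood is a disk. With that fix, Step~2 gives $|B_i(v)|\ge \alpha^i$ for some $\alpha=\alpha(c)>1$.

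The genuine gap is Step~3. The curve $\eta_{\lfloor\rho/2\rfloor-1}(v_1)$ you cut along is \emph{contractible}: it bounds the very disk you just produced. Cutting along a contractible separating curve does not reduce the Euler genus of the residual piece at all; you are simply peeling off disks while the residual keeps genus $g$ throughout. There is therefore no ``Euler genus budget'' being consumed, and nothing in your bookkeeping forces termination after $O(g+1)$ rounds. As written, the argument places no constraint relating $\rho$ to $g$.

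What is actually needed is an independent bound of the form $v(G)\le C\cdot g$ for members of a hyperbolic family closed under curve cutting; this is one of the structural results of \cite{postle2018hyperbolic}, and its proof proceeds by cutting along a shortest \emph{non-contractible} cycle (which does lower genus) and inducting. Once you have $v(G)\le Cg$, Step~2 alone finishes the job: the BFS from any vertex must leave a disk by some radius $r$ with $\alpha^r\le Cg$, i.e.\ $r=O(\log g)$, and at that radius a fundamental cycle of length at most $2r+1=O(\log g)$ is non-contractible. Your iterated disk-peeling in Step~3 is then unnecessary.
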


Theorem \ref{thm:expmanylocallyplanar} follows easily from Theorem \ref{thm:localplanarhypversion}; the proof of Theorem \ref{thm:expmanylocallyplanarg5} assuming Theorem \ref{thm:localplanarhypversion-g5} is identical, and as such we omit it.

\begin{proof}[Proof of Theorem \ref{thm:expmanylocallyplanar} (assuming Theorem \ref{thm:localplanarhypversion}).] 
Let $\mathcal{F}$ be the family of embedded graphs where for each $(H,\Sigma) \in \mathcal{F}$, the graph $H$ does not contain a subgraph that is $\frac{1}{3484}$-exponentially-5-correspondence-deletable.  By Theorem \ref{thm:localplanarhypversion}, $\mathcal{F}$ is a hyperbolic family. Fix a surface $\Sigma$ of Euler genus $g$.  By Theorem \ref{thm:smalledgewidth}, there exists a constant $k > 0$ such that every graph in $\mathcal{F}$ embedded in $\Sigma$ has edge-width at most $k \log(g+1)$. Let $(G, \Sigma)$ be an embedded graph with edge-width greater than $k \log (g+1)$, and let $(L,M)$ be a 5-correspondence assignment for $G$. We will prove by induction on $v(G)$ that $G$ has exponentially-many distinct $(L,M)$-colourings. 

First suppose $v(G) \in \{0,1\}$. Then $G$ trivially has at least $2^{\frac{v(G)}{3484}}$ distinct $(L,M)$-colourings, as desired. Next suppose $v(G) \geq 2$, that $G$ has edge-width greater than $k\log(g+1)$, and that the statement holds for all embedded graphs $(G', \Sigma)$ of edge-width greater than $k \log (g+1)$ with fewer vertices. Since every graph in $\mathcal{F}$ embedded in $\Sigma$ has edge-width at most $k\log(g+1)$, it follows that $G$ contains a subgraph $(H, \Sigma)$ that is $\frac{1}{3484}$-exponentially-5-correspondence-deletable. Note that $G-H$ is a graph embeddable in $\Sigma$ with edge-width greater than $k\log(g+1)$. By induction, $G-H$ has at least $2^\frac{v(G|H)}{3484}$ distinct $(L,M)$-colourings. By definition of $\frac{1}{3484}$-exponentially-5-correspondence-deletable, every $(L,M)$-colouring of $G-H$ extends to at least $2^\frac{v(H)}{3438}$ distinct $(L,M)$-colourings of $H$. Thus $G$ has at least $2^\frac{v(G|H)+v(H)}{3484}$ distinct $(L,M)$-colourings, as desired.
\end{proof}

Our final point of order is thus to prove Theorem \ref{thm:localplanarhypversion}. We will omit the proof of Theorem \ref{thm:localplanarhypversion-g5} as it is nearly identical; key differences will be discussed at the end of this section. Before proceeding, we will require the following easy results which follow from the definition of $r$-correspondence-deletability.

\begin{lemma}\label{lemma:maxdel}
    Let $G$ be a graph. If $H$ is an $r$-correspondence-deletable subgraph of $G$, and $G-V(H)$ contains an $r$-correspondence-deletable subgraph $H'$, then $G[V(H) \cup V(H')]$ is an $r$-correspondence-deletable subgraph of $G$. 
\end{lemma}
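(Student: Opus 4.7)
Write $K := G[V(H) \cup V(H')]$, and note that $K$ is induced in $G$ with $V(H)$ and $V(H')$ disjoint. The plan is to take an arbitrary correspondence assignment $(L,M)$ for $K$ satisfying $|L(v)| \geq r - (\deg_G(v) - \deg_K(v))$ for every $v \in V(K)$, and to produce an $(L,M)$-colouring of $K$ by first colouring $H'$ using its deletability in $G - V(H)$, and then colouring $H$ using its deletability in $G$ after shrinking lists to account for the colouring of $H'$.

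For the first step, restrict $(L,M)$ to $H'$ and note that for $v \in V(H')$ one has $\deg_K(v) = \deg_{H'}(v) + |N_G(v) \cap V(H)|$, while $\deg_{G - V(H)}(v) = \deg_G(v) - |N_G(v) \cap V(H)|$. Substituting, the hypothesis $|L(v)| \geq r - (\deg_G(v) - \deg_K(v))$ is literally the inequality $|L(v)| \geq r - (\deg_{G - V(H)}(v) - \deg_{H'}(v))$ required to invoke the $r$-correspondence-deletability of $H'$ in $G - V(H)$. Hence $H'$ admits an $(L|_{H'}, M|_{H'})$-colouring $\varphi_{H'}$.

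For the second step, define a new list assignment $L'$ on $V(H)$ by removing from $L(v)$, for each $v \in V(H)$, the colours matched by $M_{uv}$ to $\varphi_{H'}(u)$ for some neighbour $u \in N_G(v) \cap V(H')$. At most $|N_G(v) \cap V(H')|$ colours are deleted, so $|L'(v)| \geq |L(v)| - |N_G(v) \cap V(H')|$. Using $\deg_K(v) = \deg_H(v) + |N_G(v) \cap V(H')|$ for $v \in V(H)$ and the hypothesis on $|L(v)|$, this yields $|L'(v)| \geq r - (\deg_G(v) - \deg_H(v))$, which is exactly the condition needed to apply the $r$-correspondence-deletability of $H$ in $G$ to the assignment $(L', M|_H)$. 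Let $\varphi_H$ be the resulting colouring.

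Finally, $\varphi := \varphi_{H'} \cup \varphi_H$ is the desired $(L,M)$-colouring of $K$: edges inside $H$ and inside $H'$ are handled by $\varphi_H$ and $\varphi_{H'}$ respectively, while edges between $V(H)$ and $V(H')$ are handled by the very definition of $L'$, which forbade exactly those colours on $V(H)$ that would conflict with $\varphi_{H'}$ along the matching $M_{uv}$. I do not anticipate any genuine obstacle; the entire content of the lemma is the bookkeeping identity $\deg_G(v) - \deg_K(v) = \deg_{G - V(H)}(v) - \deg_{H'}(v)$ for $v \in V(H')$ together with $\deg_K(v) - \deg_H(v) = |N_G(v) \cap V(H')|$ for $v \in V(H)$, and the main thing to be careful about is that the lists shrink by exactly the amount that the original slack can absorb.
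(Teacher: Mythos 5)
Your proof is correct, and the order of colouring ($H'$ first, then $H$ with lists shrunk by exactly the slack $\deg_K(v)-\deg_H(v)$) is the right one — the reverse order would not leave enough room. The paper states this lemma without proof as following easily from the definition, and your argument is precisely the bookkeeping it has in mind.
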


The following is an immediate corollary of Lemma \ref{lemma:maxdel}.
\begin{cor}\label{cor:maxdel}
    If $H$ is a maximal $r$-correspondence-deletable subgraph of $G$, then $G-V(H)$ does not contain an $r$-correspondence-deletable subgraph.
\end{cor}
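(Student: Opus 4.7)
The plan is to prove the corollary by contraposition, directly invoking Lemma \ref{lemma:maxdel}. Specifically, I would suppose for contradiction that $H$ is a maximal $r$-correspondence-deletable subgraph of $G$ and yet $G - V(H)$ contains some $r$-correspondence-deletable subgraph $H'$. By the definition of $r$-correspondence-deletability, $H'$ is nonempty (recall that deletable subgraphs are nonempty induced subgraphs by Definition \ref{def:deletable}), and $V(H')$ is disjoint from $V(H)$ since $H' \subseteq G - V(H)$.

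Then I would apply Lemma \ref{lemma:maxdel} to $H$ and $H'$ to conclude that $G[V(H) \cup V(H')]$ is an $r$-correspondence-deletable subgraph of $G$. Because $V(H') \neq \emptyset$ and $V(H') \cap V(H) = \emptyset$, we have $V(H) \subsetneq V(H) \cup V(H')$, so $G[V(H) \cup V(H')]$ strictly contains $H$ as an induced subgraph. This contradicts the maximality of $H$, completing the proof.

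There is no real obstacle here, since the lemma does all the work; the only subtlety is confirming that the new subgraph $G[V(H) \cup V(H')]$ is genuinely a proper extension of $H$, which follows immediately from $H'$ being a nonempty induced subgraph of $G - V(H)$.
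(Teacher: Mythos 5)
Your proof is correct and matches the paper's intent exactly: the paper presents this as an immediate corollary of Lemma \ref{lemma:maxdel} with no further argument, and your contradiction via the strictly larger deletable subgraph $G[V(H)\cup V(H')]$ is precisely the implicit reasoning. No gaps.
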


We now prove Theorem \ref{thm:localplanarhypversion}.

\begin{proof}[Proof of Theorem \ref{thm:localplanarhypversion}]
   Let $(G, \Sigma) \in \mathcal{F}$, and let $\eta$ be a closed curve in $\Sigma$ that intersects $G$ only in vertices and that bounds an open disk $\Delta$. We may assume that $\eta$ is a simple curve; otherwise, we split vertices intersected by $\eta$ into multiple copies to reduce to this case. Let $D$ be the set of vertices of $G$ intersected by $\eta$. Suppose that $\Delta$ includes a vertex of $G$. We will show the number vertices of $G$ in $\Delta$ is at most $52(|D|-1)$.

    Let $G_\Delta$ be the subgraph of $G$ embedded in $\Delta$ and its boundary, $\eta$. For each pair of consecutive vertices $u,v$ along $\eta$, add the edge $uv$ if $uv \not \in E(G)$, so that $G_\Delta[D]$ is connected. Let $G_D := G_\Delta[D]$. Let $\varepsilon$ be as in Theorem \ref{d_gepsilonbound}, and let $\alpha := \frac{\varepsilon}{2\varepsilon + 1}$ (that is, $\varepsilon = \frac{1}{50}$ and hence $\alpha := \frac{1}{52}$).
    
    Let $G' \subseteq G_\Delta$ be a maximal connected subgraph with $D \subseteq V(G')$ and $d_{3,\alpha}(G'|G_D) \geq 0$. Note that such a subgraph $G'$ exists, since $d_{3,\alpha}(G_D|G_D) = 0$. Moreover, it follows that $G'$ is an induced subgraph of $G_\Delta$ since adding edges does not decrease $d_{3, \alpha}(G'|G_D)$.

    Note that by definition of $d_{3,\alpha}$, we have that since $G_D \subseteq G' \subseteq G_\Delta$, 
    $$
    d_{3,\alpha}(G_\Delta|G_D) = d_{3,\alpha}(G_\Delta|G')+ d_{3,\alpha}(G'|G_D).
    $$

    We aim to show that $d_{3,\alpha}(G_\Delta|G_D) \geq 0$, which we will show later implies that the number of vertices of $G$ in $\Delta$ is at most $52(|D|-1)$. To that end, we prove the following.
    
    \begin{claim}
         $d_{3,\alpha}(G_\Delta|G') \geq 0$.
    \end{claim}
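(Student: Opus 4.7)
My plan is to prove the claim by contradiction via Theorem~\ref{d_gepsilonbound}. Since $\alpha = \frac{1}{52} \le \varepsilon = \frac{1}{50}$, we have $d_{3,\varepsilon}(G_\Delta|G') \le d_{3,\alpha}(G_\Delta|G')$, so assuming $d_{3,\alpha}(G_\Delta|G') < 0$ forces $d_{3,\varepsilon}(G_\Delta|G') < 0$. The contrapositive of Theorem~\ref{d_gepsilonbound}, applied to the plane graph $G_\Delta$ and the connected subgraph $G'$, then produces a set $X \subseteq V(G_\Delta)\setminus V(G')$ such that $H := G_\Delta[X]$ is $5$-correspondence-deletable in $G_\Delta$. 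Because $D\subseteq V(G')$, every vertex of $V(H)$ lies interior to $\Delta$, so $\deg_G(v)=\deg_{G_\Delta}(v)$ for each $v\in V(H)$, which immediately yields that $H$ is $5$-correspondence-deletable in $G$ as well.

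To contradict the hypothesis on $\mathcal{F}$, I would upgrade $H$ to a $\frac{1}{3484}$-exponentially-$5$-correspondence-deletable subgraph of $G$. Fix an arbitrary correspondence assignment $(L,M)$ of $H$ with $|L(v)|\ge 5-(\deg_G(v)-\deg_H(v))$ for all $v\in V(H)$. Using the fresh-colour matching gadget that appears in the proof of Proposition~\ref{notdelthencrit}, I would pad each short list by introducing $k_v := 5-|L(v)|$ reserved colours at $v$ and matching each reserved colour through an extended $M^*$ to a distinct neighbour of $v$ in $V(G_\Delta)\setminus V(H)$ (enough such neighbours exist by the assumption on $|L(v)|$). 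Extending $L$ to size-$5$ lists on those external neighbours so that each reserved colour appears in the matched neighbour's list, and defining a precolouring $\phi$ of $S := G_\Delta - V(H)$ that forces each reserved colour onto its neighbour (note that $S$ is connected since it contains the outer cycle $G_D$), the extensions of $\phi$ to $V(H)$ under $(L^*,M^*)$ are in bijection with $(L,M)$-colourings of $H$. At least one such extension exists because $H$ is $5$-correspondence-deletable in $G$, so Theorem~\ref{expmanyextensions} yields at least
\[
2^{(v(H)\, -\, 51\,\defc_3(G_\Delta|S))/67}
\]
$(L,M)$-colourings of $H$.

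The final step, and the main technical hurdle, is to show that this exponent is at least $v(H)/3484 = v(H)/(67\cdot 52)$. Writing $\defc_3(G_\Delta|S) = e(H) + K - 3v(H)$ where $K := \sum_{v\in V(H)} k_v$, the required inequality unpacks to $e(H) + K \le \tfrac{157}{52}\, v(H)$. This bound is not automatic from planarity of $H$ alone; the argument must exploit the slack $k_v\le \deg_G(v)-\deg_H(v)$ paid for by each boundary edge of $H$ in $G_\Delta$, together with the planar sparsity of $H$ enlarged by its boundary neighbourhood. The numerology of the constants $\alpha=\frac{1}{52}$ and $3484 = 67\cdot 52$ is calibrated precisely so that this accounting closes, but choosing padding neighbours carefully (so that an external neighbour shared by several $v$'s can still be precoloured to trigger all of its reserved matchings simultaneously) and tracking the contribution of $K$ correctly is the delicate part of the proof.
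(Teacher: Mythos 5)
Your opening is right: assume $d_{3,\alpha}(G_\Delta|G')<0$, deduce $d_{3,\varepsilon}(G_\Delta|G')<0$, and extract via the contrapositive of Theorem~\ref{d_gepsilonbound} a $5$-correspondence-deletable subgraph $H=G_\Delta[X]$ with $X\subseteq V(G_\Delta)\setminus V(G')$. But the argument stalls at exactly the point you flag as the ``main technical hurdle,'' and the hurdle is real: for an arbitrary such $X$, the complement $S=G_\Delta-V(H)$ is some graph strictly between $G'$ and $G_\Delta$ over which you have no control --- it need not be connected (containing the connected $G_D$ does not make all of $S$ connected, as $S$ may have components disjoint from $G_D$), and you have no bound on $\defc_3(G_\Delta|S)$, so the exponent coming out of Theorem~\ref{expmanyextensions} cannot be compared to $v(H)/3484$. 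The inequality $e(H)+K\le\tfrac{157}{52}v(H)$ that you propose to establish by planar sparsity plus boundary accounting is not how the constants are calibrated, and there is no reason it should hold for an arbitrary deletable $X$.

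The two missing ideas are these. First, take $X$ \emph{maximal}. Then by Corollary~\ref{cor:maxdel} the graph $G_\Delta-V(H)$ contains no further deletable subgraph disjoint from $G'$, so Theorem~\ref{d_gepsilonbound} gives $d_{3,\varepsilon}(G_\Delta-V(H)\,|\,G')\ge 0$, hence $d_{3,\alpha}(G_\Delta-V(H)\,|\,G_D)=d_{3,\alpha}(G_\Delta-V(H)\,|\,G')+d_{3,\alpha}(G'|G_D)\ge 0$; the maximality in the choice of $G'$ then forces $G_\Delta-V(H)=G'$. Now the precoloured subgraph is $G'$ itself (connected by construction), $v(G_\Delta|G')=v(H)$, and the simple fresh-colour gadget on $G'$ (as in Proposition~\ref{notdelthencrit}) replaces your padding construction entirely. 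Second, the deficiency bound you are missing is supplied by the very hypothesis you are contradicting: $d_{3,\alpha}(G_\Delta|G')<0$ means $\defc_3(G_\Delta|G')<\alpha\cdot v(G_\Delta|G')=\alpha\cdot v(H)$, so Theorem~\ref{expmanyextensions} yields $\log_2 E(\varphi)>\bigl(1-\alpha(\varepsilon^{-1}+1)\bigr)v(H)/67=v(H)/(52\cdot 67)$, contradicting that $H$ is not $\frac{1}{3484}$-exponentially-$5$-correspondence-deletable. No separate accounting of $e(H)$ and your padding term $K$ is needed.
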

    \begin{proof}
    To see this, suppose not. Then since $d_{3,\alpha}(G_\Delta|G') < 0$ and $\alpha < \varepsilon$, it follows that $d_{3,\varepsilon}(G_\Delta|G') < 0$. By the contrapositive of Theorem \ref{d_gepsilonbound}, there exists a nonempty subset $X \subseteq V(G_\Delta)-V(G')$ such that $G_\Delta[X]$ is 5-correspondence-deletable subgraph of $G_\Delta$. We choose $X \subseteq G_\Delta-V(G')$ to be a maximal such set, and let $H = G[X]$. 
    
    Note that since $H$ is maximal, by Corollary \ref{cor:maxdel} there does not exist a 5-correspondence-deletable subgraph of $G_\Delta-V(H)$ disjoint from $G'$. Thus by Theorem \ref{d_gepsilonbound}, we have that $d_{3, \varepsilon}(G_\Delta-V(H)~|~G') \geq 0$, and so $d_{3, \alpha}(G_\Delta-V(H)~|~G') \geq 0$ since $\alpha < \varepsilon$. Now, since $G_D \subseteq G' \subseteq G_\Delta-V(H)$, by definition of $d_{3,\alpha}$ we have that $d_{3, \alpha}(G_\Delta-V(H)~|~G_D) = d_{3, \alpha}(G_\Delta-V(H)~|~G') + d_{3, \alpha}(G'|G_D)$. As the right hand side is non-negative, so is the left hand side; and so by our choice of $H$, we have that $G_\Delta-V(H) =G'$.
    
    Since $(G, \Sigma) \in \mathcal{F}$, we have that (though $H$ is 5-correspondence-deletable) $H$  is not $\frac{1}{3484}$-exponentially-5-correspondence-deletable.
    Thus there exists a correspondence assignment $(L,M)$ of $H$ where $|L(u)| \geq 5-(\deg_{G_\Delta}(u) - \deg_H(u))$ for each $u \in V(H)$ and such that $H$ has an $(L,M)$-colouring, but does not have at least $2^{\frac{v(H)}{3484}}$ distinct $(L,M)$-colourings.
    
   Let $(L',M')$ be the 5-correspondence assignment for $G_{\Delta}$ defined as follows. For each vertex $v \in V(G')$, let $\varphi(v)$ be a new unique colour for $v$. For $v\in V(G')$, we let $L'(v) := \{\phi(v)\}$. For $u\in V(H)$, we let $L'(u) := L(u) \cup \{\varphi(v): v \in V(G') \cap N_{G_\Delta}(u)\}$. For $e=uv\in E(G_{\Delta})$, we let $M'_e = M_e$ if $e\in E(H)$, and $M'_e=\emptyset$ if $e\in E(G')$; finally for $e\in E(G_{\Delta})\setminus (E(H)\cup E(G'))$ where we have that $|\{u,v\}\cap V(H)|=1$, we\textemdash assuming without loss of generality that $u\in V(H)$ and $v\in V(G')$\textemdash let $M'_{uv} = \{(u, \varphi(v))(v, \varphi(v))\}$. Recall that $G'$ is connected by definition; and thus by Theorem \ref{expmanyextensions} applied to $G_\Delta$, $G'$, and $(L',M')$,
\begin{align*}
\log_2 E(\varphi) &\geq \frac{v(G_\Delta|G')-(\varepsilon^{-1}+1)\defc_3(G_\Delta|G
')}{67}\\
&=\frac{v(H)-(\varepsilon^{-1}+1)\defc_3(G_\Delta|G')}{67}
\end{align*}

Since $d_{3,\alpha}(G_{\Delta}|G') = \defc_3(G_{\Delta}|G') - \alpha \cdot v(G_\Delta|G') < 0$, it follows that $\defc_3(G_\Delta | G') < \alpha \cdot v(G_\Delta|G')$; equivalently, that $\defc_3(G_\Delta|G') < \alpha \cdot v(H)$.

Thus, continuing from above,
\begin{align*}
 \log_2 E(\varphi)   &> \frac{v(H)-(\varepsilon^{-1}+1)\alpha\cdot v(H)}{67} \\
 &= \frac{(1-\alpha(\varepsilon^{-1}+1))v(H)}{67} \\
 &= \frac{v(H)}{52 \cdot 67} \textrm{ \hskip 8mm since $\alpha = \frac{1}{52}$ and $\varepsilon = \frac{1}{50}$.}
\end{align*}
As every extension of $\varphi$ to $H$ corresponds to a distinct $(L,M)$-colouring of $H$, we have that $H$ has at least $2^{\frac{v(H)}{3484}}$ distinct $(L,M)$-colourings, a contradiction.
\end{proof}

Thus $d_{3,\alpha}(G_\Delta|G') \geq 0$, and so $d_{3,\alpha}(G_\Delta|G_D) \geq 0$. We now show this implies the number of vertices in $G_\Delta-D$ is at most $52(|D|-1)$. To see this, let $F(G_\Delta)$ be the set of faces in the embedding of $G_\Delta$, and let $f(G_\Delta) = |F(G_\Delta)|$. Recall that $G_D$ is connected. We claim moreover that $G_\Delta$ is connected; otherwise, by Theorem \ref{expmanycc} a component $C$ of $G$ has at least  $2^\frac{v(C)}{67}$ distinct $(L,M)$-colourings for every 5-correspondence assignment $(L,M)$. But $C$ is therefore $\frac{1}{3484}$-exponentially-5-correspondence-deletable, contradicting that $G \in \mathcal{F}$.  Moreover, we we may assume $G_\Delta$ has at least three vertices: this too follows easily from the fact that $G$ does not contain a $\frac{1}{3484}$-exponentially-5-correspondence-deletable subgraph. Since $G_\Delta$ is connected and $v(G_\Delta) \geq 3$, the boundary walk of every face has length at least three. Let $f_O$ be the outer face of $G_\Delta$. By Euler's formula for graphs embedded in surfaces,
\begin{align*}
    e(G_\Delta)-3v(G_\Delta) &= e(G_\Delta)-3(e(G_\Delta)+2-f(G_\Delta)) \\
    &= 3f(G_\Delta)-2e(G_\Delta)-6 \\
    &= \sum_{f \in F(G_\Delta)} (3-|f|)-6 \\
    &\leq (3-|D|)-6 \textrm{\hskip 4mm since $|f_O| \geq |D|$, and $3-|f| \leq 0$ for all other $f \in F(G_\Delta)$.} \\
    &= -|D|-3.
\end{align*}
Now, since $\defc_3(G_\Delta|G_D) = e(G_\Delta|G_D)-3v(G_\Delta|G_D)$, it follows from above that $\defc_3(G_\Delta|G_D) \leq -|D|-3 -e(G_D)+3|D| = 2|D|-e(G_D) -3$. By construction, $e(G_D) \geq |D|-1$, and so $\defc_3(G_\Delta|G_D) \leq |D|-2$. Since $d_{3,\alpha}(G_\Delta|G_D) = \defc_3(G_\Delta|G_D)-\alpha \cdot v(G_\Delta|G_D)$ and $d_{3, \alpha}(G_\Delta|G_D) \geq 0$, it follows that $(|D|-2)-\alpha \cdot v(G_\Delta|G_D) \geq 0$. Rearranging, we have that  $v(G_\Delta|G_D) \leq \alpha^{-1}(|D|-2)$. Thus $\mathcal{F}$ is a hyperbolic family, and 
$\frac{1}{\alpha} = 52$ is a Cheeger constant for $\mathcal{F}$, as desired.
\end{proof}

We conclude this section with a quick word on the differences between the proofs of Theorems \ref{thm:localplanarhypversion} and \ref{thm:localplanarhypversion-g5}: first, instead of adding edges $uv$ between consecutive vertices along $\eta$ to ensure $G_\Delta[D]$ is connected, we add new vertices $x_1,x_2$ to $D$ and paths $ux_1x_2v$. Call the set $D$ together with all new vertices $D'$. After performing this operation, $G_\Delta[D']$ is connected, and $|D'| \leq 3|D|$. Later, we invoke Theorems \ref{d_gepsilonboundg5} and \ref{expmanyextensions5} instead of the analogous Theorems \ref{d_gepsilonbound} and \ref{expmanyextensions} used in the girth three case. Finally, in the closing arguments where we count the number of vertices in $\Delta$ and invoke Euler's formula, we use the fact that the boundary walk of each face is at least five. This follows from the fact that either $D'$ has at least four vertices and $G$ has girth at least five, or $|D'| = |D| \leq 2$ and so since $G$ has girth at least five and does not contain $\frac{1}{25380}$-exponentially-3-correspondence-deletable subgraph, there are at least three vertices in $G_\Delta - D'$.   


\section{Further Directions}\label{sec:conclusion}


It is natural to wonder whether deficiency hyperbolicity theorems exist for other planar graph classes. (As discussed in Subsection \ref{subsec:hyperbolicity}, such theorems would have other interesting implications beyond the scope of counting colourings.) For instance, in \cite{esrlukelocal}, we introduce the concept of a \emph{local girth list assignment}. This is defined below, following another necessary definition.

The \emph{girth of an edge} and \emph{girth of a vertex} are defined as follows.

\begin{definition}
Let $G$ be a graph, and $e\in E(G)$.  The \emph{girth of $e$} is denoted by $g(e)$ and is defined as the length of the shortest cycle in $G$ in which $e$ is contained.  Similarly, for $v \in V(G)$, the \emph{girth of $v$} is denoted by $g(e)$ and is defined as the length of the shortest cycle in $G$ in which $v$ is contained. If a vertex or edge is not contained in a cycle, its girth is defined as being infinite.
\end{definition}

\begin{definition}
Let $G$ be a graph with list assignment $L$. We say $L$ is a \emph{local girth list assignment} if every vertex $v \in V(G)$ has $|L(v)| \geq 3$; if every vertex $v \in V(G)$ with $g(v) =4$ has $|L(v)| \geq 4$; and if every vertex $v \in V(G)$ with $g(v)=3$ has $|L(v)| \geq 5$. We say $G$ is \emph{local girth choosable} if $G$ has an $L$-colouring for every local girth list assignment $L$. \end{definition}

In \cite{esrlukelocal} we prove the following theorem. 

\begin{thm}\label{localgirth}
Every planar graph is local girth choosable.
\end{thm}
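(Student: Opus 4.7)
The plan is to follow the template of Thomassen's inductive proofs of planar $5$-choosability \cite{thomassen5LC} and planar girth-$5$ $3$-choosability \cite{thomassen3LC}, strengthening Theorem \ref{localgirth} to a precolouring extension statement amenable to induction on $v(G)$. Specifically, one would state: let $G$ be a plane graph with outer face boundary walk $C$, let $P \subseteq C$ be a short precoloured path, and let $L$ be a local girth list assignment in the interior of $G$, together with appropriately relaxed list sizes on $V(C) \setminus V(P)$ that mirror the ``loss of two colours on the boundary'' seen in Theorems \ref{thomtech5cc} and \ref{thomtech3cc}, but calibrated to the local girth of each boundary vertex. The target is then: every $L$-colouring of $P$ extends to an $L$-colouring of $G$.

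The inductive proof would proceed by the standard Thomassen reductions. If $C$ has a chord or $G$ is not $2$-connected, split $G$ along the chord or cut-vertex and apply the inductive hypothesis to each piece, propagating the precolouring across the shared interface. If $G$ contains a separating cycle of length at most four (matching the girths that drive the list size constraints of $4$ or $5$), apply induction separately to the two subgraphs bounded by that cycle. Otherwise, select a vertex $v$ of $C$ adjacent to $P$, colour it from $L(v)$ in a way that leaves enough usable colours on its interior neighbours after deletion, and recurse on $G - v$. A key structural observation making this feasible is that deleting a vertex can only \emph{increase} the girth of any surviving vertex, so the local girth hypothesis is preserved under deletion (with lists that are possibly larger than strictly required on some vertices).

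The main obstacle is formulating the correct uniform inductive statement. In Thomassen's $3$-, $4$-, and $5$-choosability theorems the outer face vertices all uniformly lose two colours relative to the interior, but here the interior requirement depends on each vertex's girth, and a single boundary vertex's girth can change discontinuously when the boundary is modified during the induction (for instance, after absorbing a vertex into the precoloured path, or after splitting along a separating $3$- or $4$-cycle that creates new short cycles through the new boundary). One therefore has to pin down precisely which combination of interior and boundary girth constraints interpolates correctly between the three classical choosability regimes. A secondary delicate point is handling triangles and $4$-cycles that straddle the boundary, since such configurations force a vertex's ``effective'' girth with respect to the current subproblem to differ from its girth in the original embedding.

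Once the statement is pinned down, the remaining work is a (lengthy) case analysis of the local configurations around the chosen boundary vertex $v$, showing in each case that a colour can be picked for $v$ so that after deletion the remaining graph still satisfies the strengthened hypothesis. I expect this case analysis, rather than any single clever step, to be the bulk of the proof: it must simultaneously cover the separate Thomassen-style reductions for the triangle-bearing, $4$-cycle-bearing, and girth-$\geq 5$ portions of the graph, all unified under one bookkeeping scheme.
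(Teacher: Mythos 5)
There is a genuine gap, and it sits exactly where you locate it yourself. This theorem is not proved in the present paper at all: it is imported from our earlier work \cite{esrlukelocal}, and the concluding section even remarks on ``the intricacy of the proof of even Theorem~\ref{localgirth}'' as a reason to doubt that stronger variants are within reach. What you have written is a plan for a proof rather than a proof: the one ingredient a Thomassen-style argument cannot function without --- the precise strengthened precolouring-extension statement on which the induction runs --- is explicitly left unformulated (``the main obstacle is formulating the correct uniform inductive statement''), and every subsequent step (the chord/cut-vertex splits, the separating-cycle reductions, the choice of $v$ and the recursion on $G-v$) is conditional on that missing statement. In a Thomassen induction the strengthened hypothesis \emph{is} the proof; without it nothing has been verified.

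To see that the gap is substantive and not just bookkeeping: the calibration ``boundary vertices lose two colours, graded by local girth'' does not interpolate between Theorems~\ref{thomtech5cc} and~\ref{thomtech3cc}. A boundary vertex of girth at least five would be left with a list of size one, which is only tenable under the extra structural hypotheses of Theorem~\ref{thomtech3cc} (there, even the size-two boundary lists must lie on an independent set with no edges to the precoloured path), and in a graph of mixed local girth nothing forces that structure on the boundary. The separating $3$- and $4$-cycle reductions precolour a \emph{cycle} rather than a path of at most two vertices, so the inner piece requires a cycle-precolouring extension statement strictly stronger than the path version, which is likewise not supplied. Your observation that deletion only increases girth is correct and is indeed used in the actual argument, but it does not resolve either of these issues. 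The proof in \cite{esrlukelocal} resolves them only through a considerably more elaborate induction hypothesis and a long structural analysis; your outline correctly predicts that this is where the work lies, but does not carry it out.
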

In light of the existing literature concerning exponentially many list colourings of planar graphs, it is natural to wonder, for each planar graph $G$ with local girth list assignment $L$, whether $G$ has exponentially many distinct $L$-colourings. Using Theorem \ref{thm:alonfuredi}, it is not too difficult to show this. Indeed, we prove this below, following a useful proposition.

 \begin{prop}\label{prop:eulergirth}
If $G$ is a planar graph that contains a cycle, then 
\[
v(G) - \sum_{e \in E(G)} \left(1-\frac{2}{g(e)} \right) \geq 2,
\]
where we interpret $\frac{2}{g(e)}$ to be $0$ if $g(e)=\infty$.
 \end{prop}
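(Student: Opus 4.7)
The plan is to reformulate the inequality via Euler's formula and then prove the resulting inequality by a face-discharging argument. Rearranging, the claim is equivalent to
\[
v(G) - e(G) + \sum_{e \in E(G)} \frac{2}{g(e)} \geq 2.
\]
Since Euler's formula gives $v(G) - e(G) + f(G) \geq 2$ in any planar embedding (where $f(G)$ is the number of faces, with equality for connected $G$), it suffices to prove that $\sum_{e \in E(G)} 2/g(e) \geq f(G)$ in some fixed planar embedding.

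For the main step, I would fix a planar embedding of $G$ and use discharging. For each incidence of an edge $e$ with a face $F$, send a charge of $1/g(e)$ from $e$ to $F$. Since each edge has exactly two face-incidences (counted with multiplicity), the total charge distributed equals $\sum_e 2/g(e)$, and each face $F$ accumulates $\sum_{e \text{ incident to } F} 1/g(e)$, where bridges incident to $F$ contribute $0$ because $g(e) = \infty$. It then suffices to show that every face receives total charge at least $1$.

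When $G$ is $2$-edge-connected this is immediate: the boundary walk of each face $F$ traverses each edge of $\partial F$ exactly once and has length $\ell(F)$. For any edge $e = uv$ on $\partial F$, the rest of the walk is a $v$-to-$u$ walk of length $\ell(F) - 1$ avoiding $e$, which contains a $v$-to-$u$ path; adjoining $e$ produces a cycle through $e$ of length at most $\ell(F)$. Hence $g(e) \leq \ell(F)$, so $\sum_{e \in \partial F} 1/g(e) \geq \ell(F) \cdot \frac{1}{\ell(F)} = 1$, and summing over faces gives $\sum_e 2/g(e) \geq f(G)$.

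For general $G$, I would reduce to the $2$-edge-connected case by induction on $v(G) + e(G)$, with base case $G = C_3$ (both sides equal $2$). The inductive step removes an isolated vertex (which boosts the left-hand side by $1$), strips off a pendant vertex together with its pendant bridge (preserving the left-hand side since the bridge contributes $1 - 2/g(e) = 1$), or splits along a non-pendant bridge into components $G_1, G_2$, applying induction to any component with a cycle and using $v - e = 1$ for any tree component; disconnected $G$ is handled analogously by summing component-wise. The main obstacle is that bridges contribute nothing to $\sum_e 2/g(e)$ yet can add substantially to the face count via tree-like detours along face boundaries; the induction sidesteps this by peeling bridges off until only the $2$-edge-connected case remains, where the discharging argument closes cleanly.
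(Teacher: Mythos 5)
Your proof is correct, but it routes around the bridge problem differently from the paper. The paper also starts from Euler's formula and discharges, but in the opposite direction: each \emph{face} keeps its $+1$ charge and sends $1/e(C)$ to each edge of a single chosen cycle $C$ in its boundary walk; an edge then receives charge from at most two faces, each contributing at most $1/g(e)$, so every edge ends with charge at least $-1+2/g(e)$ and the inequality drops out in one stroke, for arbitrary planar graphs with a cycle (the only fact needed is that every face boundary contains a cycle). You instead send charge from edges to faces, which forces you to argue that every face receives at least $1$; since bridges pay nothing, you must first strip them out, hence your reduction to the $2$-edge-connected case via an induction peeling off isolated vertices, pendant edges, and bridges. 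Both arguments are sound (your inductive bookkeeping checks out: a bridge contributes exactly $1$ to the sum, a tree component contributes exactly $1$ to the left-hand side, and girths are unaffected by deleting edges that lie on no cycle), but your version is longer. It is worth noting that your own edge-to-face discharging would in fact close directly without the induction: if $C$ is any cycle in the boundary walk of a face $F$, then each edge of $C$ has $g(e)\le e(C)$ and sends $F$ at least $1/e(C)$, so $F$ already receives at least $1$ from the edges of $C$ alone; this is essentially the paper's observation transposed to your direction of discharging.
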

\begin{proof}
    We assume that $G$ is a plane graph (that is, we fix a planar embedding of $G$). Note that since $G$ contains at least one cycle, it follows that every face of $G$ contains at least one cycle in its boundary walk. By Euler's formula for graphs embedded in the plane, we have that $v(G) - e(G) + f(G) \geq 2$, where $f(G)$ denotes the number of faces of $G$. Now we assign a charge of +1 to each face and vertex of $G$, and -1 to each edge of $G$. Hence from above, the total sum of all charges is at least $+2$. 
    
    We discharge according to the following rule: for each face $f$, choose a cycle $C$ in its boundary walk, and send $\frac{1}{e(C)}$ charge to each edge in $C$. Note then that if an edge $e$ is contained in a cycle, it receives at most $+\frac{2}{g(e)}$ charge. If $e$ is not contained in a cycle, then since $g(e)$ is defined as infinite, again $e$ receives at most $\frac{2}{g(e)}$ charge (where $\frac{2}{g(e)} = 0$). Thus after discharging, the sum of the charges is at most $v(G) -e(G) + \sum_{e \in E(G)} \frac{2}{g(e)}$, and so

    \begin{align*}
         v(G) - \sum_{e \in E(G)} \left( 1 - \frac{2}{g(e)} \right) \geq 2, 
    \end{align*}
as desired.
\end{proof}

\begin{thm}
If $G$ is a planar graph and $L$ is a local girth list assignment for $G$, then $G$ has at least $5^\frac{v(G)}{12}$ distinct $L$-colourings.
\end{thm}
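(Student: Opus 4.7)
The plan is to apply the Alon-F\"uredi bound (Theorem~\ref{thm:alonfuredi}) to the edge polynomial $P(x_1,\ldots,x_n) = \prod_{v_iv_j \in E(G)}(x_i-x_j)$ of degree $d = e(G)$; a point in $L(v_1) \times \cdots \times L(v_n)$ is a non-zero of $P$ precisely when it yields a valid $L$-colouring of $G$. Without loss of generality each $|L(v)|$ equals its minimum allowed value---namely $5$, $4$, or $3$ on the vertex sets $V_3$, $V_4$, $V_{\geq 5}$, where $V_k = \{v : g(v) = k\}$ and $V_{\geq 5} = V(G) \setminus (V_3 \cup V_4)$. By Theorem~\ref{localgirth}, $P$ does not vanish identically, so Theorem~\ref{thm:alonfuredi} guarantees at least $t^{(S-n-d)/(t-1)}$ distinct $L$-colourings, where $S = 5|V_3| + 4|V_4| + 3|V_{\geq 5}|$, $n = v(G)$, and $t = \max_v |L(v)|$.

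The heart of the proof is the inequality
\[
3 \cdot e(G) \;\leq\; 11|V_3| + 8|V_4| + 5|V_{\geq 5}|.
\]
Granting this, a short calculation shows the Alon-F\"uredi bound exceeds $5^{v(G)/12}$ in each of the three cases $t \in \{3,4,5\}$: for $t = 5$ the displayed inequality rearranges exactly to $(S-n-d)/4 \geq v(G)/12$; for $t = 4$ (so $V_3 = \emptyset$) it reduces to $3e(G) \leq 8|V_4| + 5|V_{\geq 5}|$, giving $(S-n-d)/3 \geq v(G)/9$, and the bound follows from $4^{12} > 5^9$; for $t = 3$ the graph has girth at least $5$, so $e(G) \leq 5(v(G)-2)/3$, $(S-n-d)/2 \geq v(G)/6$, and $3^{v(G)/6} \geq 5^{v(G)/12}$ since $9 > 5$.

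To prove the key inequality I partition $E(G)$ by edge girth into counts $e_3, e_4, e_{\geq 5}$ (the last including bridges, where $g(e) = \infty$) and combine three ingredients: (i) Proposition~\ref{prop:eulergirth}, together with $1 - 2/g(e) \geq 3/5$ when $g(e) \geq 5$, gives $e_3/3 + e_4/2 + 3e_{\geq 5}/5 \leq v(G) - 2$; (ii) every girth-$3$ edge has both endpoints in $V_3$ and so lies in the planar graph $G[V_3]$, yielding $e_3 \leq 3|V_3| - 6$; (iii) the subgraph of girth-$4$ edges is triangle-free planar with vertex set contained in $V_3 \cup V_4$, yielding $e_4 \leq 2(|V_3| + |V_4|) - 4$. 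Taking $5 \cdot (\mathrm{i}) + \tfrac{4}{3}(\mathrm{ii}) + \tfrac{1}{2}(\mathrm{iii})$---the unique linear combination forcing the coefficients of $e_3, e_4, e_{\geq 5}$ on the left to all equal $3$---and substituting $v(G) = |V_3| + |V_4| + |V_{\geq 5}|$ on the right produces $3e(G) \leq 10|V_3| + 6|V_4| + 5|V_{\geq 5}| - 20$, from which the key inequality is immediate. The main obstacle is locating this combination: each of (i)--(iii) is tight in a different regime (girth-$\geq 5$ planar, triangulation, and girth-$4$ planar respectively), so their naive sum is far too weak in mixed regimes, and one is forced to choose the weights $(5, 4/3, 1/2)$ precisely so that all three edge-girth contributions balance.
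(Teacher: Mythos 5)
Your proposal is correct and takes essentially the same route as the paper: Alon--F\"uredi applied to the edge polynomial, reduced to the inequality $3e(G)\le 11|V_3|+8|V_4|+5|V_{\ge 5}|$, which is obtained by linearly combining Euler-type bounds on the girth-stratified subgraphs (your ingredients (ii) and (iii) are exactly what Proposition~\ref{prop:eulergirth} yields for the subgraphs $G_3$ and $G_4$ used in the paper, and your explicit case split on $t\in\{3,4,5\}$ is if anything more careful than the paper's treatment). The only caveat is degenerate cases: the constants $-6$ and $-4$ in (ii)--(iii), and the bound $e(G)\le 5(v(G)-2)/3$ in the $t=3$ case, presuppose that the relevant subgraph contains a cycle, but whenever it does not the corresponding edge count is $0$ (or $e(G)\le v(G)-1$ for forests) and the weaker bounds without those constants already suffice.
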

\begin{proof} 
We may assume without loss of generality that $|L(v)| = 5$ for all $v \in V(G)$ with $g(v) =3$; that $|L(v)| = 4$ for all $v \in V(G)$ with $g(v) = 4$; and that $|L(v)| = 3$ for all $v \in V(G)$ with $g(v) \geq 5$. We follow the polynomial method described in Subsection \ref{subsec:polmethod}.

First assume $G$ is a forest, in which case $e(G) \leq v(G)-1$. In this case, $|L(v)| \geq 3$ for all $v \in V(G)$, and so since $G$ has an $L$-colouring by Theorem \ref{localgirth}, it follows by the Alon-F\"uredi theorem (Theorem \ref{thm:alonfuredi}) that $G$ has at least $3^{\frac{3\cdot v(G) - v(G) -(v(G)-1)}{2}} = 3^\frac{v(G)+1}{2}$, as desired.

Thus we may assume $G$ is not a forest, and therefore contains a cycle.  Since there exists an $L$-colouring of $G$ by Theorem \ref{localgirth}, a direct application of the Alon-F\"uredi theorem (Theorem \ref{thm:alonfuredi}) gives that there are at least $5^\frac{S - v(G) -e(G)}{4}$ distinct $L$-colourings, where $S = \sum_{v \in V(G)} |L(v)|$.  For each $i \in \{3,4\}$, let $e_i := |\{e \in E(G): g(e) = i\}|$. Let $e_5 := |\{e \in E(G): g(e) \geq 5\}|$. Analogously, for each $i \in \{3,4\}$, let $v_i := |\{v \in V(G): g(v) = i\}|$, and let $v_5 := |\{v \in V(G): g(v) \geq 5\}|$. Since $L$ is a local girth list assignment, it follows that $S \geq 5v_3 + 4v_4 + 3v_5$, and so from above, we find that there exist at least $5^\frac{4v_3+3v_4+2v_5-e_3-e_4-e_5}{4}$ distinct $L$-colourings of $G$. It remains to bound the numerator in the exponent.

To that end, we define two subgraphs of $G$ as follows: let $G_3$ be the graph with $V(G_3):= \{v \in V(G): g(v) = 3\}$ and $E(G_3) := \{e \in E(G): g(e) = 3\}$. Let $G_4$ be the graph with $V(G_4):= \{v \in V(G): g(v) \in \{3,4\}\}$ and $E(G_4) = \{e \in E(G): g(e) \in \{3,4\}\}$.

Note that $v(G_3) = v_3$. Hence by Proposition \ref{prop:eulergirth} applied to $G_3$, we find that
\begin{equation}\label{eq:1}
    v_3 \geq \frac{e_3}{3}.
\end{equation}
Similarly, note that $v(G_4) = v_3 + v_4$. By Proposition \ref{prop:eulergirth} applied to $G_4$, we find that
\begin{equation}\label{eq:2}
    v_3 + v_4 \geq \frac{e_3}{3} + \frac{e_4}{2}.
\end{equation}
Finally, by Proposition \ref{prop:eulergirth} applied to $G$, we find that 
\begin{equation}\label{eq:3}
    v_3 + v_4 + v_5 \geq \frac{e_3}{3} + \frac{e_4}{2} + \frac{3e_5}{5}.
\end{equation}
Adding Equation \ref{eq:1} with $\frac{1}{3}$ of Equation \ref{eq:2} and $\frac{5}{3}$ of Equation \ref{eq:3}, we obtain
\begin{align*}
    3v_3 +2v_4 + \frac{5}{3} v_5 \geq e_3 + e_4 + e_5.
\end{align*}
Plugging this into our Alon-F\"uredi bound, we obtain the desired result.
\end{proof}

It is also natural to wonder whether an analogous result holds in the correspondence colouring framework. In this framework, it is not clear how one would define the polynomial required to invoke Theorem \ref{thm:alonfuredi}. A more pressing issue is that it is not currently known whether planar graphs are local girth correspondence colourable at all. However, we conjecture this is true.

\begin{conj}
   If $G$ is a planar graph and $(L,M)$ is a correspondence assignment for $G$ where $L$ is a local girth list assignment, then $G$ is $(L,M)$-colourable. 
\end{conj}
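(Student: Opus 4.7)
The plan is to mimic the proof of Theorem \ref{localgirth} from \cite{esrlukelocal} in the correspondence colouring setting. As the authors note (following Dvo\v{r}\'ak and Postle), Thomassen's proofs of both the 5-choosability of planar graphs and the 3-choosability of planar graphs of girth at least five transfer verbatim to correspondence colouring. Since the local girth choosability theorem is a common generalization of both, one expects the same to hold here, though some extra care is needed to handle the interplay between triangular, quadrangular, and longer faces.

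First, I would formulate a strengthened precolouring extension theorem, analogous to Theorems \ref{thomtech5cc} and \ref{thomtech3cc}, amenable to induction. Roughly: let $G$ be a plane graph with outer face boundary walk $C$, and let $S$ be a path in $C$ with $v(S)$ small (perhaps at most $6$ as in the girth-five case). Let $(L,M)$ be a correspondence assignment such that $|L(v)|=1$ for $v\in V(S)$, the interior vertices satisfy the local girth conditions ($3$, $4$, or $5$ depending on $g(v)$), and vertices of $V(C)\setminus V(S)$ are given reduced list sizes according to a carefully chosen boundary rule (smaller by $2$ than the interior rule, with additional slack for isolated "short-list" vertices in independent sets as in Theorem \ref{thomtech3cc}). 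I would then show every $(L,M)$-colouring of $S$ extends to $G$.

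Second, I would prove this by induction on $v(G)$, handling reductions by face/cycle structure. The standard moves are: (i) a chord or short separating cycle splits $G$ into two smaller plane graphs, to each of which induction is applied (with the precolouring extended appropriately using the interior lists on the cut); (ii) an internal vertex $v$ of small degree can be removed, since the local girth condition on its list size exceeds its degree; (iii) a vertex on the outer face adjacent to $S$ with sufficiently many available colours can be coloured greedily. In the correspondence setting, each of these reductions is valid because they depend only on the cardinalities of lists and the counting of forbidden pairs through the matchings $M_e$, which is exactly what Thomassen-style induction uses.

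The main obstacle will be the reductions that are specific to triangles and $4$-cycles in the local girth regime, where neighbouring vertices carry different list sizes and one wishes to colour a boundary vertex so as to leave two distinct constraints on an interior triangle or 4-cycle simultaneously satisfiable. In list colouring, one exploits the fact that forbidden colours are named elements of some universe, so common colours can be counted and identified; in correspondence colouring, this has to be replaced by careful manipulation of the matchings (e.g., for a triangle $uvw$ with $v,w$ interior, showing there is a choice of $\varphi(u)$ such that at least two colours remain available at $v$ and at $w$ after accounting for $M_{uv}$ and $M_{uw}$, so that the remaining $2$-list instance on $\{v,w\}$ is solvable). Verifying that each such configuration admits a correspondence-friendly reduction\textemdash particularly at the interface between triangles, 4-cycles, and longer faces that sits at the heart of the local-girth proof\textemdash is where the technical effort lies.
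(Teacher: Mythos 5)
The first thing to note is that the paper does not prove this statement: it appears only as a conjecture in Section \ref{sec:conclusion}, and the authors explicitly write that it is not currently known whether planar graphs are local girth correspondence colourable at all. So there is no proof in the paper to compare yours against; the only question is whether your proposal closes that gap, and it does not.

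Your outline is a reasonable research plan, but it is not a proof. The entire difficulty of the conjecture is concentrated in the step you defer: verifying that the triangle and $4$-cycle reductions in the proof of Theorem \ref{localgirth} survive the passage to correspondence colouring. The reason Thomassen's proofs of Theorems \ref{thomtech5cc} and \ref{thomtech3cc} transfer verbatim is that they never identify colours across distinct lists --- every step is a pure cardinality count on residual lists. The local girth argument does not have this property: at the interface between faces of different girth one typically colours a boundary vertex $u$ so that its two interior neighbours $v,w$ on a triangle retain lists whose intersection or union behaves well, and ``the same colour is forbidden at $v$ and at $w$'' has no invariant meaning once $M_{uv}$ and $M_{uw}$ are arbitrary partial matchings. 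Your own closing sentence concedes that checking each such configuration ``is where the technical effort lies''; that effort is precisely the content of the conjecture, and nothing in your proposal supplies it. As written, the proposal establishes nothing beyond the heuristic the authors already had in mind when they stated the conjecture, and the paper's remark on the intricacy of the proof of Theorem \ref{localgirth} suggests the missing verification is far from routine.
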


Our method (using deficiency versions of hyperbolicity theorems) could also be used to show planar graphs have exponentially many local girth correspondence colourings, but proving such a hyperbolicity theorem seems difficult. Indeed, it is not yet known whether there exists a hyperbolicity theorem for local girth list colouring. We conjecture this (and its correspondence colouring analogue) is true. 

\begin{conj}
    There exists a theorem analogous to Theorem \ref{theorem:mainhypthm} for local girth list colouring.
\end{conj}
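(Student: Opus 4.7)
The plan is to establish the following analog of Theorem \ref{theorem:mainhypthm}: there exists an absolute constant $\varepsilon > 0$ such that if $G$ is a 2-connected plane graph with outer cycle $C$, and $L$ is a local girth list assignment on $V(G) \setminus V(C)$ (with the vertices of $C$ precoloured, encoded by one-element lists), and $G$ is $C$-critical with respect to $L$, then $v(G) \leq \frac{1+\varepsilon}{\varepsilon}\cdot v(C)$. Ideally one would prove the stronger deficiency version in the style of Theorem \ref{theorem:stronglinear}, so that the framework of Section \ref{sec:girth3} would then yield exponentially many local girth list colourings of planar graphs and, via Section \ref{sec:locallyplanar}, of locally planar graphs as well.

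My approach would follow the two-phase template of \cite{lukeevehyperbolicity}: first catalogue the reducible configurations that cannot appear in a $C$-critical $G$, then run a discharging argument to bound $v(G)$ linearly in $v(C)$. The easy reducible configurations carry over directly\textemdash for instance, an interior vertex $v$ with $\deg_G(v) < |L(v)|$ can be coloured last, contradicting criticality. More delicate reductions (short separators, low-degree vertices adjacent to $C$, small faces with multiple boundary vertices on $C$) need to be re-verified with the mixed list sizes; the favourable direction is that a vertex which lies in a triangle or $4$-face automatically gets a larger list ($5$ or $4$ respectively), which is often enough to make Thomassen-type path arguments go through essentially unchanged. Once the reducible configurations are in place, I would set up a discharging scheme with initial charges $\mathrm{ch}(v) = \deg(v) - 4$ and $\mathrm{ch}(f) = |f| - 4$, so that by Euler's formula the total interior charge is $O(v(C))$, and then design transfer rules that drive every interior vertex and face to non-negative final charge.

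The hard part will be that the local girth requirement on $L$ is a genuinely global condition: the list size at $v$ depends on the shortest cycle through $v$ in $G$. Standard reduction operations such as identifying non-adjacent vertices or contracting edges\textemdash staples of Thomassen-style proofs\textemdash can shorten cycles and thereby \emph{increase} the list size required at surviving vertices, potentially invalidating the inductive hypothesis. Any proof must either restrict to girth-non-decreasing operations (deletion, and identification only along specific configurations) or else work with a fixed list assignment satisfying a slightly strengthened local girth condition robust under the reductions used. Relatedly, the definition of criticality itself requires care: the cleanest formulation keeps $L$ fixed throughout, so that subgraphs inherit lists without regard to their own girth structure.

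A secondary challenge is tuning the discharging scheme so that it produces a uniform constant $\varepsilon$ across the three list-size regimes (girth-$3$ vertices with $5$-lists, girth-$4$ vertices with $4$-lists, girth-$\geq 5$ vertices with $3$-lists). I would expect to weight charges by a quantity like $|L(v)| - \deg(v)$, or a more refined function tied to the local girth of $v$, rather than purely by degree, so that a single set of discharging rules handles all three cases simultaneously. Extracting a clean explicit $\varepsilon$ and matching it against a deficiency bound strong enough to plug into the proof of Theorem \ref{expmanyextensions} is likely the most delicate part of the argument.
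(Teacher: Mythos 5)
This statement is posed in the paper as an open conjecture, not a theorem: the authors explicitly write that it is not yet known whether a hyperbolicity theorem exists for local girth list colouring, and they offer no proof. What you have written is a research plan rather than a proof, and it does not close the gap. Concretely: beyond the single trivial reducible configuration (an interior vertex with $\deg_G(v) < |L(v)|$), no reducible configuration is actually verified; no discharging rules are stated, let alone shown to leave every interior vertex and face with non-negative charge; and no value of $\varepsilon$ is extracted. The assertion that ``Thomassen-type path arguments go through essentially unchanged'' for the delicate reductions is exactly the claim that would need to be proved, and your own third paragraph explains why it is doubtful: identification and contraction operations, which are essential in the proofs of Theorem \ref{theorem:mainhypthm} and Theorem \ref{theorem:stronglinear} in \cite{lukeevehyperbolicity}, can shorten cycles through surviving vertices and thereby raise the list size those vertices are required to have, so the inductive hypothesis need not apply to the reduced graph. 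You propose two escape routes (restricting to girth-non-decreasing operations, or fixing $L$ and demanding a robustified local girth condition) but do not carry either one out, and it is not clear that either suffices; note that the paper's proof of even the plain colourability statement, Theorem \ref{localgirth}, is described as intricate, and the hyperbolicity strengthening is strictly harder.

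A secondary point: your formulation of the target statement quietly changes the setting. Theorem \ref{theorem:mainhypthm} is stated for a $5$-list (or $5$-correspondence) assignment on all of $G$ with criticality relative to the outer cycle $C$; you instead precolour $C$ with singleton lists and impose the local girth condition only on $V(G)\setminus V(C)$. Whether local girth is measured in $G$, in $G - V(C)$, or in the planar graph before cutting along a disk matters for the intended application (the curve-cutting step in Section \ref{sec:locallyplanar} changes girths of boundary vertices), and this needs to be pinned down before the induction can even be set up. As it stands, the proposal identifies the right obstacles but resolves none of them, so the conjecture remains open.
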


\begin{conj}
    There exists a theorem analogous to Theorem \ref{theorem:mainhypthm} for local girth correspondence colouring.
\end{conj}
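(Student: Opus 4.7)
The plan is to adapt the deficiency-based discharging framework used to prove Theorem~\ref{theorem:mainhypthm} in~\cite{lukeevehyperbolicity}, tailoring it to the mixed list-size structure of local girth correspondence colouring. First I would formulate the appropriate notion of $C$-criticality with respect to a local girth correspondence assignment $(L,M)$, taking care that the girths in question are measured in $G$ itself (so cycles using $C$ count). I would then introduce a deficiency function that weights each interior vertex $v$ by something like $|L(v)|-2$ (so that vertices of girth $3$ get weight $3$, girth $4$ get weight $2$, and girth $\geq 5$ get weight $1$), each interior edge by $-1$, and uses a face contribution driven by Euler's formula. The goal of the theorem would be to show that for any $C$-critical $(G,C)$, some $d_{\text{lg},\varepsilon}(G|C) \geq 0$, which via the standard Euler computation in Proposition~\ref{prop:eulergirth} then yields $v(G) \leq O(v(C))$.

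The heart of the proof, as in~\cite{lukeevehyperbolicity}, would be identifying a catalogue of reducible configurations: small subgraphs $H \subseteq G\setminus V(C)$ whose deletion produces a smaller $C$-critical graph and such that any precolouring of $G-V(H)$ extends to $H$. Unlike in the uniform 5-correspondence case, each reducible configuration now has to be analyzed for every possible girth-profile of its vertices, since a vertex of degree $d$ with $3$ colours in its list is reducible under weaker conditions than a vertex with $5$ colours. In practice this means re-proving each reducibility lemma of~\cite{lukeevehyperbolicity} under the weaker hypothesis that each vertex $v$ has $|L(v)|\geq r(v)$, where $r(v) \in \{3,4,5\}$ depends on $g(v)$, and then organizing the cases by girth-profile. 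The tools from \cite{esrlukelocal} used to prove Theorem~\ref{localgirth} should be the starting point here, since they already handle the mixed girth structure for existence of a single colouring.

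Once an adequate catalogue is in place, the third step is a discharging argument on a minimum counterexample, redistributing the initial charges so that every interior vertex and face finishes with nonnegative (in fact at least $\varepsilon$) charge, forcing all leftover charge onto $C$ and yielding the desired linear bound. The hard part will be the reducibility step. Two interacting obstacles make this substantially harder than the uniform-girth case: (a) reducibility of configurations crossing girth-boundaries, where correspondence colouring forbids vertex identification and colour folding that are routinely used in the list-colouring analogue, forcing ad hoc matching-based arguments analogous to those in Proposition~\ref{notdelthencrit}; and (b) verifying that the discharging rules remain consistent when vertices of $C$ themselves have girth-dependent list sizes, which likely requires a stronger extension-style hypothesis in the spirit of Theorem~\ref{thomtech5cc} and Theorem~\ref{thomtech3cc} allowing short precoloured paths on $C$. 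I expect (a) to be the principal source of difficulty, mirroring the experience in~\cite{lukeevehyperbolicity} where translating list-colouring reductions to correspondence colouring was precisely what made the argument long.
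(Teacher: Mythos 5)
This statement is left as an open conjecture in the paper; the authors offer no proof and explicitly remark that even the \emph{list-colouring} analogue of Theorem~\ref{theorem:mainhypthm} for local girth assignments is not known, and that the existence proof (Theorem~\ref{localgirth}) is already intricate. Your submission is a research plan, not a proof: every substantive step is deferred. The entire content of a hyperbolicity theorem of this type lives in (i) the catalogue of reducible configurations together with actual reducibility proofs under the mixed hypothesis $|L(v)| \geq r(v)$ with $r(v) \in \{3,4,5\}$ depending on $g(v)$, and (ii) the verification that a specific discharging scheme closes against that specific catalogue. You name both steps but carry out neither, and you yourself identify the reducibility step as the ``hard part'' that remains. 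As it stands nothing is established beyond what the paper already conjectures.

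Two further concrete issues with the plan itself. First, your proposed vertex weight $|L(v)|-2$ does not obviously interact correctly with the face contributions needed in Proposition~\ref{prop:eulergirth}: there the edge term $1 - 2/g(e)$ depends on the girth of \emph{edges}, while your deficiency is driven by the girth of \emph{vertices}, and an edge of girth $3$ may join two vertices of girth $3$, or a vertex of girth $3$ to one of girth $4$, etc.; reconciling these two girth notions in a single deficiency function that telescopes (as $d_{3,\varepsilon}$ does in Theorem~\ref{d_gepsilonbound}) is itself a nontrivial design problem you have not addressed. Second, a prerequisite for any such theorem is the existence statement for local girth \emph{correspondence} colouring, which is itself only conjectured in this paper; without it, the base cases of any criticality induction (e.g.\ the analogues of Lemma~\ref{Hcritical} for $v(G|H) \in \{0,1\}$) cannot even be started. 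A credible attack would likely first settle that conjecture, then the list-colouring hyperbolicity version, before the correspondence version.
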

One might also wonder whether there exists a \emph{strong} hyperbolicity theorem for local girth list colouring; however, given the intricacy of the proof of even Theorem \ref{localgirth}, proving such a theorem seems a daunting task.

Returning our attentions to other surfaces:
in \cite{thomassen2006number}, Thomassen proved the following for arbitrary embedded graphs.
\begin{thm}[Thomassen, \cite{thomassen2006number}]\label{thm:thomassenexpmany5-colsurfaces}
For every surface $\Sigma$, there exists a constant $c > 0$ such that if $(G, \Sigma)$ is an embedded graph and $G$ is 5-colourable, then $G$ has at least $c \cdot 2^n$ distinct 5-colourings.
\end{thm}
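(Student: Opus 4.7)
The plan is to proceed by induction on the Euler genus $g$ of $\Sigma$, with the base case $g = 0$ supplied by the classical Birkhoff--Lewis theorem \cite{birkhoff1946chromatic}: every planar graph $G$ has at least $60 \cdot 2^{v(G)-3}$ distinct $5$-colourings. For the inductive step, I would split into two cases according to the edge-width of $(G, \Sigma)$.

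First (small edge-width case), suppose $G$ contains a non-contractible cycle $C$ of length at most $\ell(g)$ for some function $\ell$. Cutting along $C$ yields an embedding $(G', \Sigma')$ with $\Sigma'$ of strictly smaller Euler genus and $v(G') = v(G) + |V(C)|$. Since $G$ is $5$-colourable, so is $G'$, and by induction $G'$ has at least $c' \cdot 2^{v(G')}$ distinct $5$-colourings. The $5$-colourings of $G'$ that descend to $5$-colourings of $G$ are exactly those in which each pair of copies of a vertex in $C$ receives the same colour. A counting argument akin to Thomassen's precolouring-extension techniques applied to the cut graph should show that an $\Omega(5^{-|V(C)|})$ fraction of the colourings of $G'$ have this property, yielding $\Omega(2^{v(G)})$ colourings of $G$ (with constant depending on $g$).

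Second (large edge-width case), $G$ is locally planar. Here I would mimic Birkhoff and Lewis directly: fix a $5$-colouring $\varphi$, and for each pair of colour classes $\{i,j\}$ count the connected components $k_{ij}$ of the bipartite subgraph $G[V_i \cup V_j]$; any subset of these components can be independently swapped to produce a new $5$-colouring, giving at least $2^{k_{ij}}$ colourings per pair. The goal is to find $\varphi$ such that some $k_{ij}$ is $v(G) - O(1)$, or more robustly that $\sum_{i<j} k_{ij} = v(G) - O(1)$. In the locally planar setting, short cycles are contractible and the graph behaves planarly on each disk neighbourhood, so the Birkhoff--Lewis local counting argument should transfer.

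The main obstacle is the locally planar case. The methods of the present paper --- and indeed the hyperbolicity machinery used to prove Theorems~\ref{thm:expmanylocallyplanar} and~\ref{thm:expmanylocallyplanarg5} --- yield bounds of the form $2^{cv(G)}$ for some $c < 1$, which is strictly weaker than the $c \cdot 2^{v(G)}$ target. Bridging this gap requires a sharper Kempe-chain analysis in which large edge-width is used not only to establish local planarity but also to prevent Kempe chains from traversing non-contractible cycles (which would artificially merge what should be distinct components). Making this delicate combinatorial--topological counting quantitatively tight is the technical heart of the argument.
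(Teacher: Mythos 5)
This theorem is cited from Thomassen's 2006 paper and is not proved in the present paper, so there is no in-paper proof to compare against; what follows is an assessment of your proposal on its own terms.

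Your proposal has two genuine gaps. First, in the small edge-width case, the claim that an $\Omega(5^{-|V(C)|})$ fraction of $5$-colourings of the cut graph $G'$ descend to $G$ is unjustified. The colourings of $G'$ that descend to $G$ are exactly those assigning the same colour to each pair of copies in $C_1, C_2$; nothing in your inductive hypothesis (a lower bound on the \emph{total} number of colourings of $G'$) forces a positive fraction of those colourings to agree on the two copies. A pigeonhole over the $5^{2|V(C)|}$ possible restrictions of a colouring to $C_1 \cup C_2$ only produces a popular restriction, not one that agrees on the two sides, and it is easy to imagine cut graphs where almost every colouring disagrees somewhere along the cut. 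To repair this you would need a precolouring-extension count --- fix a colouring of $C$ that does extend to $G$, and lower-bound the number of its extensions to $G'$ --- which requires a strictly stronger inductive statement than the one you set up, and is precisely the kind of result the hyperbolicity machinery in this paper delivers. But, as you yourself note, those bounds take the form $2^{cn}$ with $c < 1$ rather than $c \cdot 2^n$, so even the repaired cut step would not hit the target.

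Second, and more seriously, the locally planar case is left entirely open. You correctly observe that the Birkhoff--Lewis Kempe-chain count does not transfer as stated, since Kempe chains in a locally planar but non-planar graph can wrap around non-contractible cycles and defeat the planar estimate $\sum_{i<j} k_{ij} \geq v(G) - O(1)$; you identify this as the technical heart of the argument and supply no replacement for it. Taken together, the proposal reduces the theorem to two unproved claims, the second of which contains essentially all of the difficulty, so as written it does not constitute a proof.
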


In \cite{postle20213} and \cite{postle2018hyperbolic}, the first author and Thomas generalized Thomassen's results to list colouring as follows (in fact, they prove a stronger notion again involving counting the number of extensions of a precoloured subgraph).
\begin{thm}[Postle \& Thomas, \cite{postle2018hyperbolic}]
There exist constants $\epsilon, \alpha > 0$ such that the following holds. Let $G$ be a graph with $n$ vertices embedded in a fixed surface $\Sigma$ of genus $g$, and let $H$ be a proper subgraph of $G$. Suppose that either $L$ is a 5-list-assignment for $G$, or that $G$ has girth at least five and that $L$ is a 3-list-assignment for $G$. If $\phi$ is an $L$-colouring of $H$ that extends to an $L$-colouring of $G$, then $\phi$ extends to at least $2^{\varepsilon(n-\alpha(g+v(H)))}$ distinct $L$-colourings of $G$.   
\end{thm}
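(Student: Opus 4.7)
The plan is to adapt the strategy used in this paper for planar graphs, combining the precolouring-extension machinery of Theorem \ref{expmanyextensions} (or Theorem \ref{expmanyextensions5} in the girth-five case) with the hyperbolicity-based deletability arguments of Theorems \ref{thm:expmanylocallyplanar} and \ref{thm:expmanylocallyplanarg5}, and then account for the topological contribution of the genus $g$ via a strong-hyperbolicity bound on the remaining ``core.'' Since correspondence colouring generalises list colouring, I would in fact prove the stronger correspondence-colouring version.

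First I would strengthen the notion of $\varepsilon$-exponentially-$r$-correspondence-deletable so that it is compatible with the precoloured subgraph $H$: I want that whenever $K$ is such a subgraph of $G$ disjoint from $V(H)$, any extension of $\phi$ to $G\setminus V(K)$ lifts to at least $2^{\varepsilon v(K)}$ distinct extensions on $G$. The correspondence-assignment-independence in Definition~\ref{def:deletable} gives this lifting property, because after fixing the colouring outside $K$ the remaining constraints on $K$ can be folded into a reduced correspondence assignment satisfying the degree-list hypothesis. Iteratively peeling off a maximal collection of pairwise-disjoint exponentially-$r$-correspondence-deletable subgraphs $K_1,K_2,\dots,K_t$ (each disjoint from $V(H)$) multiplies to give an exponential factor of $2^{\varepsilon m}$, where $m=\sum_i v(K_i)$.

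The second step is to bound $n-m-v(H)$ by $O(g+v(H))$. Let $G'$ be the ``core'' remaining after peeling. By Corollary~\ref{cor:maxdel} (adapted to the exponential setting), $G'\setminus V(H)$ contains no exponentially-correspondence-deletable subgraph. Then the deficiency-hyperbolicity theorems (Theorem~\ref{d_gepsilonbound} in the $5$-correspondence case, Lemma~\ref{d_gepsilonboundg5} in the $3$-correspondence case), applied inside each disk bounded by a shortest system of non-contractible curves through the embedding of $G'$ in $\Sigma$, let me control the number of internal vertices linearly in the number of boundary vertices. By Theorem~\ref{thm:smalledgewidth} the relevant hyperbolic family contains a non-contractible cycle of length $O(\log(g+1))$, and cutting along a short system of $O(g)$ such cycles reduces $G'$ to a planar piece whose outer boundary has total length $O(g\log(g+1))$; the strong-hyperbolicity arguments of \cite{postle2018hyperbolic} tighten this to $O(g)$ and, combined with the at most $v(H)$ ``interface'' vertices, give $v(G')-v(H)\le \alpha(g+v(H))$ for a suitable constant $\alpha$. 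Since $\phi$ extends to $G$ by hypothesis, it extends to $G'$ (at least once), and then the peeled factors multiply to yield $\ge 2^{\varepsilon(n-\alpha(g+v(H)))}$ extensions in total.

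The main obstacle will be the topological bookkeeping in the second step: peeling off deletable subgraphs can disrupt connectivity and alter the face structure of the embedding in $\Sigma$, and one has to maintain throughout a controlled cutting system so that the deficiency-hyperbolicity theorems (which are stated for disks bounded by simple closed curves through the graph) continue to apply. This is essentially why one needs the \emph{strong} hyperbolicity theorem of \cite{postle2018hyperbolic} rather than just hyperbolicity: strong hyperbolicity bounds vertices in annuli between two boundary cycles, which is what allows the genus contribution to appear additively as $\alpha g$ rather than multiplicatively. Once that structural control is in place, the exponential count multiplies through cleanly and the bound $2^{\varepsilon(n-\alpha(g+v(H)))}$ follows.
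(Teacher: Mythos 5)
This statement is quoted from Postle and Thomas \cite{postle2018hyperbolic} (and \cite{postle20213}); the present paper does not prove it, so there is no internal proof to compare against. Your overall architecture --- peel off exponentially-deletable subgraphs disjoint from $H$, multiply the resulting factors, and bound the remaining core via hyperbolicity plus cutting along a short system of non-contractible curves --- is a faithful reconstruction of the Postle--Thomas strategy and is consistent with how Section \ref{sec:locallyplanar} handles the locally planar case.

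There is, however, a genuine gap in your opening move: you propose to prove the stronger correspondence-colouring version, but the additive genus term $\alpha g$ requires a \emph{strong} hyperbolicity theorem (bounding vertices in annuli, not just disks) for the relevant critical family, and no such theorem is known for $5$-correspondence colouring. Section \ref{sec:conclusion} of this paper states this explicitly: the strong hyperbolicity theorem for $5$-list colouring is over 200 pages, and generalizing it to correspondence colouring is described as a ``formidable task.'' Your second step silently assumes exactly this missing ingredient when it invokes ``the strong-hyperbolicity arguments of \cite{postle2018hyperbolic}'' to tighten the boundary length to $O(g)$. Even if you retreat to the list-colouring statement as written, two further issues remain. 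First, the deficiency tools you cite from this paper (Theorem \ref{d_gepsilonbound}, Lemma \ref{d_gepsilonboundg5}) are disk statements; to make the genus contribution additive rather than multiplicative you need a deficiency version of the \emph{annular} bound, which this paper nowhere supplies and which you would have to extract from \cite{postle20213} --- at which point you are essentially re-citing the proof of the theorem you are trying to establish. Second, the cutting step needs care that you only gesture at: after peeling the $K_i$, the core $G'$ need not be connected or cellularly embedded, so the curve system and the ``no exponentially-deletable subgraph'' hypothesis must be re-established on each piece; Corollary \ref{cor:maxdel} handles disjointness of deletable subgraphs but not the topological bookkeeping. The proposal is therefore a correct high-level outline of the known proof for the list-colouring case, but it is not self-contained, and the correspondence-colouring upgrade you announce is an open problem rather than a routine strengthening.
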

In \cite{kelly2018exponentially}, Kelly and Postle proved an analogous theorem for 4-list colouring embedded graphs of girth at least four. We note that these theorems  would also follow directly from the polynomial method described in Subsection \ref{subsec:polmethod}. Using the approach of Dahlberg, Kaul, and Mudrock \cite{dahlberg2023algebraic}, analogous results for 3-correspondence colouring embedded graphs of girth at least five also hold. Again, the polynomial method does not seem amenable to proving there are exponentially many 5-correspondence colourings of embedded graphs, though this should follow if one could prove \emph{strong} hyperbolicity theorems for this class of graphs.  The first author and Thomas proved such a theorem for 5-list colouring; but as the full proof is over 200 pages, generalizing such a result to the correspondence colouring framework seems a formidable task. 

\paragraph{Acknowledgement.}
Several results in this paper form part of the doctoral dissertation \cite{evethesis} of the second author, written under the guidance of the first.

\bibliographystyle{siam}
\bibliography{bibliog}
\end{document}